\renewcommand\theequation{\thesection.\arabic{equation}}
\newcommand{\BC}{{\mathbb {C}}}
\newcommand{\BG}{{\mathbb {G}}}
\newcommand{\BH}{{\mathbb {H}}}
\newcommand{\BN}{{\mathbb {N}}}
\newcommand{\BQ}{{\mathbb {Q}}}
\newcommand{\BR}{{\mathbb {R}}}
\newcommand{\BZ}{{\mathbb {Z}}}
\newcommand{\CC}{{\mathcal {C}}}
\newcommand{\CE}{{\mathcal {E}}}
\newcommand{\CH}{{\mathcal {H}}}
\newcommand{\CI}{{\mathcal {I}}}
\newcommand{\CJ}{{\mathcal {J}}}
\newcommand{\CL}{{\mathcal {L}}}
\newcommand{\CN}{{\mathcal {N}}}
\newcommand{\CO}{{\mathcal {O}}}
\newcommand{\CS}{{\mathcal {S}}}
\newcommand{\CU}{{\mathcal {U}}}
\newcommand{\CW}{{\mathcal {W}}}
\newcommand{\CX}{{\mathcal {X}}}
\newcommand{\CZ}{{\mathcal {Z}}}
\newcommand{\FB}{{\mathfrak {B}}}
\newcommand{\FD}{{\mathfrak {D}}}
\newcommand{\Fa}{{\mathfrak {a}}}
\newcommand{\Fc}{{\mathfrak {c}}}
\newcommand{\Fd}{{\mathfrak {d}}}
\newcommand{\Fe}{{\mathfrak {e}}}
\newcommand{\Ff}{{\mathfrak {f}}}
\newcommand{\Fg}{{\mathfrak {g}}}
\newcommand{\Fl}{{\mathfrak {l}}}
\newcommand{\Fn}{{\mathfrak {n}}}
\newcommand{\Fo}{{\mathfrak {o}}}
\newcommand{\Fp}{{\mathfrak {p}}}
\newcommand{\Fr}{{\mathfrak {r}}}
\newcommand{\Fs}{{\mathfrak {s}}}
\newcommand{\Fu}{{\mathfrak {u}}}
\newcommand{\RG}{{\mathrm {G}}}
\newcommand{\RI}{{\mathrm {I}}}
\newcommand{\RO}{{\mathrm {O}}}
\newcommand{\RU}{{\mathrm {U}}}
\newcommand{\ScO}{{\mathscr {O}}}
\newcommand{\Ad}{{\mathrm{Ad}}}
\newcommand{\an}{{\mathrm{an}}}
\newcommand{\bp}{{\mathrm{bp}}}
\newcommand{\disc}{{\mathrm{disc}}}
\newcommand{\End}{{\mathrm{End}}}
\newcommand{\Gal}{{\mathrm{Gal}}}
\newcommand{\GL}{{\mathrm{GL}}}
\newcommand{\gen}{{\mathrm{gen}}}
\newcommand{\gp}{{\mathrm{gp}}}
\newcommand{\Hom}{{\mathrm{Hom}}}
\newcommand{\Ind}{{\mathrm{Ind}}}
\newcommand{\ind}{{\mathrm{ind}}}
\newcommand{\Isom}{{\mathrm{Isom}}}
\newcommand{\Jac}{{\mathrm{Jac}}}
\newcommand{\Mp}{{\mathrm{Mp}}}
\newcommand{\nsd}{{\mathrm{nsd}}}
\renewcommand{\Re}{{\mathrm{Re}}}
\newcommand{\Res}{{\mathrm{Res}}}
\newcommand{\SL}{{\mathrm{SL}}}
\newcommand{\Spec}{{\mathrm{Spec}}}
\newcommand{\SO}{{\mathrm{SO}}}
\newcommand{\Sp}{{\mathrm{Sp}}}
\newcommand{\st}{{\mathrm{st}}}
\newcommand{\Span}{{\mathrm{Span}}}
\newcommand{\tr}{{\mathrm{tr}}}
\newcommand{\udl}{\underline}
\newcommand{\wt}{\widetilde}
\newcommand{\wh}{\widehat}
\newcommand{\apair}[1]{\left\langle {#1} \right\rangle}
\newcommand{\bs}{\backslash}
\def\diag{{\rm diag}}
\def\eps{{\epsilon}}
\newtheorem{thm}{Theorem}[section]
\newtheorem{cor}[thm]{Corollary}
\newtheorem{lem}[thm]{Lemma}
\newtheorem{prop}[thm]{Proposition}
\newtheorem {conj}[thm]{Conjecture}
\newtheorem {ques/conj}[thm]{Question/Conjecture}
\newtheorem {ques}[thm]{Question}
\newtheorem{defn}[thm]{Definition}
\begin{document}
\renewcommand{\theequation}{\arabic{equation}}
\numberwithin{equation}{section}

\title[Branching Law and Generic $L$-packet]{Arithmetic Branching Law and Generic $L$-Packets}

\author{Cheng Chen}
\address{School of Mathematics, University of Minnesota, Minneapolis, MN 55455, USA}
\email{chen5968@umn.edu}

\author{Dihua Jiang}
\address{School of Mathematics, University of Minnesota, Minneapolis, MN 55455, USA}
\email{dhjiang@math.umn.edu}

\author{Dongwen Liu}
\address{School of Mathematical Sciences, Zhejiang University, Hangzhou 310058, Zhejiang, China}
\email{maliu@zju.edu.cn}

\author{Lei Zhang}
\address{Department of Mathematics,
National University of Singapore,
Singapore 119076}
\email{matzhlei@nus.edu.sg}

\keywords{Classical Groups over Local Fields, Admissible Representations and Casselman-Wallach Representations, Local Gan-Gross-Prasad Conjecture, Local Descent, Branching Decomposition and Spectrum, Generic $L$-Packets and Discrete Series.}

\date{\today}
\subjclass[2010]{Primary 11F70, 22E50; Secondary 11S25, 20G25}

\thanks{The research of C. Chen and D. Jiang is supported in part by the NSF Grant DMS--2200890; that of D. Liu is supported in part by National Key R\&D Program of China No. 2022YFA1005300 and National Natural Science Foundation of China No. 12171421; that of L. Zhang is supported by AcRF Tier 1 grants 	A-0004274-00-00 and A-0004279-00-00 of the National University of Singapore.}

\begin{abstract}
Let $G$ be a classical group defined over a local field $F$ of characteristic zero. For any irreducible admissible representation $\pi$ of $G(F)$, which is of Casselman-Wallach type if $F$ is archimedean, we extend the study of spectral decomposition of local descents in \cite{JZ18} for special orthogonal groups over non-archimedean local fields to more general classical groups over any local field $F$. In particular, if $\pi$ has a generic local $L$-parameter, we introduce the {\sl spectral first occurrence index} $\Ff_\Fs(\pi)$ and the {\sl arithmetic first occurrence index} $\Ff_{\Fa}(\pi)$ of $\pi$ and prove in Theorem \ref{thm:SA-foi} that $\Ff_\Fs(\pi)=\Ff_{\Fa}(\pi)$. 
Based on the theory of consecutive descents of enhanced $L$-parameters developed in \cite{JLZ22}, we are able to show in Theorem \ref{thm:ADS} 
that the first descent spectrum consists of all discrete series representations, which determines explicitly the branching decomposition problem by means of the relevant arithmetic data and extends the main result (\cite[Theorem 1.7]{JZ18}) to the great generality. 
\end{abstract}

\maketitle
\tableofcontents

\section{Introduction}


Let $F$ be a local field of characteristic zero. Let $G_n^*$ be $F$-quasisplit classical groups and $G_n$ be an $F$-pure inner form of $G_n^*$ (Section \ref{ssec-CGIF}). 
Let $\Pi_F(G_n)$ be the set of equivalence classes of irreducible admissible representations
$\pi$ of $G_n(F)$, which are of Casselman-Wallach type if $F$ is archimedean. The local Gan-Gross-Prasad conjecture
for classical groups detects certain information on the branching law for either Bessel setting or Fourier-Jacobi setting by means of certain relations of the relevant local root numbers on the arithmetic side of the local Langlands correspondence for the classical groups over $F$ (\cite{GGP12} and \cite{GGP20}). More precisely, for an integer $p_1>0$, 
which is even if $G_n(F)$ is symplectic or metaplectic; and is odd if $G_n$ is orthogonal, 
there is a twisted Jacquet module $\CJ_{\CO_{p_1}}(\pi)$ of either Bessel type or Fourier-Jacobi type for $\pi\in\Pi_F(G_n)$ as defined 
in Section \ref{ssec-SFO}, which is a representation of $H_{\CO_{p_1}}(F)$. The $F$-quasisplit form of $H_{\CO_{p_1}}(F)$ that will be explicitly explained in \eqref{relevantpair} is
of the form:
\begin{equation} \label{rp}
(G_n^*, H_{\lfloor(\frak{n}-p_1)/2\rfloor}^*)=
\begin{cases} 
(\SO_\frak{n}, \SO_{\frak{n}-p_1}), & \textrm{if }E=F\textrm{ and }\epsilon=1,\\
(\Sp_{2n}, \Mp_{2n-p_1}) \textrm{ or }(\Mp_{2n}, \Sp_{2n-p_1}), & \textrm{if }E=F\textrm{ and }\epsilon=-1,\\
(\RU_\frak{n}, \RU_{\frak{n}-p_1}), & \textrm{if }E=F(\delta).
\end{cases} 
\end{equation}
If $p_1=\Fn$, we regard $H_{\CO_{p_1}}^*$ as the trivial group. 
If $(V,q)$ is a non-degenerate $\epsilon$-Hermitian vector space such that $G_n:=\Isom(V,q)^\circ$, then we must have $p_1\leq \Fn=\dim V$. 
Here $\CO_{p_1}$ is an $F$-rational nilpotent orbit in $\Fg_n(F)$, with $\Fg_n$ the Lie algebra of $G_n$, which is contained in the $F$-stable nilpotent orbit $\CO_{p_1}^\st$ associated with partition $[p_1,1^{\Fn-p_1}]$. See Section \ref{ssec-SFO} for more details. 

The branching problem related to the local Gan-Gross-Prasad conjecture is to detect 
such representations $\sigma\in\Pi_F(H_{\CO_{p_1}})$ that 
\begin{equation}\label{GGP-BL}
    \Hom_{H_{\CO_{p_1}}(F)}(\CJ_{\CO_{p_1}}(\pi)\,\wh\otimes\,\sigma, \mathbbm{1})\neq 0
\end{equation}
by means of the enhanced $L$-parameters of $\pi$ and $\sigma$. Here and thereafter, $\mathbbm{1}$ denotes the trivial representation.
In this paper, we continue our previous work (\cite{JZ18} and \cite{JLZ22}) to obtain
much refined structures for the branching problem in this setting, by using the method of
descents of enhanced $L$-parameters as developed in \cite{JZ18} and \cite{JLZ22}. 

We denote by $\Spec_{\CO_{p_1}}(\pi)$ the set of the contragredient representations 
$\sigma^\vee\in \Pi_F(H_{\CO_{p_1}})$ with \eqref{GGP-BL} true for $\sigma$, and call $\Spec_{\CO_{p_1}}(\pi)$ 
the spectrum of $\pi$ along $\CO_{p_1}$ or the spectrum of the module $\CJ_{\CO_{p_1}}(\pi)$. 
We define the spectrum of $\pi$ at $p_1$ to be 
\begin{equation}\label{specp1}
    \Spec_{p_1}(\pi):=\bigcup_{\CO_{p_1}\subset\CO_{p_1}^\st}\Spec_{\CO_{p_1}}(\pi).
\end{equation}

\begin{ques}[Existence of Spectrum]\label{ques:NE}
For a given $\pi\in\Pi_F(G_n)$, is the spectrum $\Spec_{p_1}(\pi)$ nonempty for some choice of integer $p_1$ with $0<p_1\leq\Fn$?
\end{ques}

By using the Vogan version of the local Langlands correspondence (Section \ref{ssec-LVP}), 
the local Gan-Gross-Prasad conjecture (Section \ref{ssec-LGGP}) and the descent of enhanced 
$L$-parameters (Section \ref{ssec-DELP}), we are able to prove that if a $\pi\in\Pi_F(G_n)$ has a generic local $L$-parameter, then there exists at least one 
integer $p_1$ with $0<p_1\leq\Fn$, such that the spectrum $\Spec_{p_1}(\pi)$ is not empty (Proposition \ref{prop:sne}).

As a basic question in the general branching problem (see \cite{Ko19}), it is interesting to ask the following question. 
\begin{ques}[Discrete Series Spectrum]\label{ques:DSS}
For any $\pi\in\Pi_F(G_n)$, does there exist an integer $p_1$ with $0<p_1\leq\Fn$ such that if the spectrum $\Spec_{\CO_{p_1}}(\pi)$ is not empty for some $F$-rational nilpotent orbit $\CO_{p_1}$, then $\Spec_{\CO_{p_1}}(\pi)$ consists only of irreducible discrete series representations $\sigma^\vee$ of $H_{\CO_{p_1}}(F)$?
\end{ques}

In order to understand Questions \ref{ques:NE} and \ref{ques:DSS}, we introduce a notion of {\sl spectral first occurrence index}, which is denoted by 
$\Ff_\Fs(\pi)$ for any $\pi\in\Pi_F(G_n)$ and is defined by 
\begin{equation}
    \Ff_\Fs(\pi):=\max\{p_1\mid 0<p_1\leq\Fn, \Spec_{\CO_{p_1}}(\pi)\neq\emptyset\ 
    {\rm for\ some}\ \CO_{p_1}\subset\CO_{p_1}^\st\}.
\end{equation}
In this case, we call the spectrum $\Spec_{\Ff_\Fs}(\pi)$ the {\sl first descent spectrum} of $\pi$ if $\Ff_\Fs=\Ff_\Fs(\pi)$. One of the main results of this paper is the following theorem, which provides a positive answer to a stronger version of Question \ref{ques:DSS}. 

\begin{thm}\label{thm:DSS}
If  $\pi\in\Pi_F(G_n)$ has a generic $L$-parameter, then the first descent 
spectrum $\Spec_{\Ff_\Fs}(\pi)$ consists only of discrete series representations, 
which are representations of 
$H_{\CO_{\Ff_\Fs}}(F)$ for some $F$-raitonal nilpotent orbit $\CO_{\Ff_\Fs}\subset\CO_{\Ff_\Fs}^\st$, where $\Ff_\Fs=\Ff_\Fs(\pi)$ is the spectral first occurrence index of $\pi$.  
\end{thm}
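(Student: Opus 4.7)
The plan is to reduce the spectral statement to a purely arithmetic one via Theorem \ref{thm:SA-foi}, and then invoke the structure theorem for consecutive descents of enhanced $L$-parameters from \cite{JLZ22}. Set $\Ff := \Ff_\Fs(\pi) = \Ff_\Fa(\pi)$, where the second equality is the content of Theorem \ref{thm:SA-foi}. Since the arithmetic first occurrence index $\Ff_\Fa(\pi)$ is defined directly in terms of the Jordan block data of the enhanced $L$-parameter $(\phi_\pi, \chi_\pi)$, and $\phi_\pi$ is generic by hypothesis, we have full combinatorial control on the right-hand side. The local Gan-Gross-Prasad conjecture in the generic case (Section \ref{ssec-LGGP}), combined with the Vogan parametrization, then produces a bijection between $\sigma^\vee \in \Spec_{\CO_\Ff}(\pi)$ and certain admissible enhanced descent parameters $(\phi_\sigma, \chi_\sigma)$ of $(\phi_\pi, \chi_\pi)$ at the level $\Ff$.

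Next, I would apply the theory of descents of enhanced $L$-parameters from \cite{JLZ22} to read off the structure of each such $\phi_\sigma$. At the maximal admissible index $\Ff$, the allowed descents are precisely those that strip off the prescribed "top" Jordan block of $\phi_\pi$ compatibly with the GGP $\epsilon$-factor matching. The key structural input from \cite{JLZ22} is that, starting from a generic parameter, every such maximal descent yields a parameter whose nontrivial irreducible constituents are pairwise distinct and self-dual of the appropriate type—that is, a discrete series parameter in the sense of the local Langlands correspondence. Transporting this back through the enhanced LLC gives that $\sigma$ is a discrete series representation of the appropriate descent group $H_{\CO_\Ff}(F)$.

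The main obstacle lies in the last step: one must show that the maximality of $\Ff_\Fa(\pi)$ truly forces the descent to land in the discrete series locus, with no "tempered but non-discrete" residue. The idea is that any repeated self-dual summand or non-self-dual pair surviving in $\phi_\sigma$ would admit a further nontrivial descent of $\phi_\sigma$, and the consecutive descent formalism of \cite{JLZ22} would then splice this together with the original descent to produce a descent of $\phi_\pi$ at an index strictly larger than $\Ff$, contradicting the maximality of $\Ff_\Fa(\pi)$. Making this "lifting of descents" precise requires careful bookkeeping of (i) the combinatorial recipe governing consecutive descents, (ii) the enhanced component $\chi_\sigma$ under the $\epsilon$-factor constraint, and (iii) the distinction between the $F$-stable orbit $\CO_\Ff^\st$ and its $F$-rational pieces $\CO_\Ff$ in the Vogan packet. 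A further check, largely routine but notation-heavy, is to verify the argument uniformly across the three cases in \eqref{rp} and in both the non-archimedean and Casselman-Wallach archimedean settings; once the combinatorial backbone from \cite{JLZ22} is in hand, each of these reduces to tracking Jordan block data.
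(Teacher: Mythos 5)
Your overall skeleton matches the paper's: identify $\Ff_\Fs(\pi)$ with $\Ff_\Fa(\pi)$ via Theorem \ref{thm:SA-foi}, then import the discreteness of the arithmetic first descent of enhanced $L$-parameters from \cite{JLZ22} (Theorem \ref{thm:DFD}) and transport it back through the Vogan correspondence. However, there is a genuine gap at the pivot point. You assert that the local Gan--Gross--Prasad conjecture together with the Vogan parametrization ``produces a bijection between $\sigma^\vee\in\Spec_{\CO_\Ff}(\pi)$ and certain admissible enhanced descent parameters.'' This is not available as stated: the spectrum $\Spec_{\CO_\Ff}(\pi)$ is defined by the nonvanishing of $\Hom_{H_{\CO_\Ff}(F)}(\CJ_{\CO_\Ff}(\pi)\,\wh\otimes\,\sigma,\mathbbm{1})$ for \emph{arbitrary} irreducible $\sigma\in\Pi_F(H_{\CO_\Ff})$, whereas the established GGP conjecture only governs representations with generic (indeed tempered, in the form used here) $L$-parameters. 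A priori the first descent spectrum could contain a non-tempered $\sigma$ with non-generic parameter, about which GGP says nothing, so the ``bijection'' presupposes exactly what must be proved.

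The missing step is the paper's reduction to the tempered case via Proposition \ref{prop:PI}: realize $\sigma$ as the Langlands quotient of a standard module $\RI(\udl{s},\tau_1,\dots,\tau_t,\sigma_0)$, observe that nonvanishing of the Hom-space for $\sigma$ forces nonvanishing for the standard module, and then apply the multiplicity inequality $m(\pi,\sigma)\leq m(\pi,\sigma_0)$ to conclude that the tempered piece $\sigma_0^\vee$ occurs in $\Spec_{\CO_{\Ff+2p_0}}(\pi)$ with $p_0=n_1+\cdots+n_t$. Maximality of $\Ff=\Ff_\Fs(\pi)$ then forces $p_0=0$, i.e.\ $\sigma=\sigma_0$ is tempered, and only \emph{at that point} does GGP plus Definition \ref{defn:pd} place $(\wh\phi,\wh\nu)$ in $\FD_{\Fl_0}(\varphi,\mu)$, whence Theorem \ref{thm:DFD} gives discreteness. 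This analytic input (whose proof occupies Section \ref{sec-PPI}, including the new Fourier--Jacobi non-archimedean case) cannot be bypassed. Separately, the effort you devote in your third paragraph to re-deriving the discreteness of the arithmetic first descent by a ``lifting of descents'' argument is misdirected: that statement is quoted wholesale from \cite[Theorem 5.3]{JLZ22} and needs no reproof here; the real work lies on the spectral side you glossed over.
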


In order to prove Theorem \ref{thm:DSS}, we use the Vogan version of the local Langlands 
correspondence to obtain a unique enhanced $L$-parameter $(\varphi,\mu)$ for each $\pi\in\Pi_F(G_n)$. Here $\varphi$ is a generic $L$-parameter of an $F$-quasisplit classical group $G_n^*$, of which $G_n$ is an $F$-pure inner form, and is $G_n$-relevant, so that 
$\pi\in\Pi_\varphi(G_n)$, the local $L$-packet associated with $\varphi$. And $\mu$ is a character of the component group $\CS_\varphi$ of $\varphi$ determined by $\pi$ and $G_n$. 
We refer to Section \ref{ssec-LVP} for the definition of $\CS_\varphi$. Finally,
the uniqueness of the correspondence between $\pi$ and $(\varphi,\mu)$ depends on a choice of the Whittaker datum that is determined by a number $a\in F^\times$ (or $a\in\CZ$ as in \eqref{CZ}, more precisely). In this way, we write
\begin{equation}
    \pi=\pi_a(\varphi,\mu)\ {\rm and}\ \iota_a(\pi)=(\varphi,\mu)
\end{equation}
which realizes a unique one-to-one correspondence between the local Vogan packet associated 
with $\varphi$, which is denoted by $\Pi_\varphi[G_n^*]$, and the character group 
(the Pontryagin dual) $\wh{\CS_\varphi}$ of the component group $\CS_\varphi$ associated with 
$\varphi$. Here the local Vogan packet $\Pi_\varphi[G_n^*]$ is defined to be
\[
\Pi_\varphi[G_n^*]:=\bigcup_{G_n}\Pi_\varphi(G_n)
\]
where $G_n$ runs over all $F$-pure inner forms of the $F$-quasisplit group $G_n^*$. The 
local $L$-packet $\Pi_\varphi(G_n)$ is defined to be empty if $\varphi$ is not $G_n$-relevant. 
The method of descents of enhanced $L$-parameters $(\varphi,\mu)$ was first studied in \cite{JZ18} and is further developed in \cite{JLZ22}. This method defines the descent of 
an enhanced $L$-parameter $(\varphi,\mu)$ and the notion of 
{\sl first occurrence index} of $(\varphi,\mu)$, which is denoted by $\Fl_0(\varphi,\mu)$. 
The descent of $(\varphi,\mu)$ at the first occurrence index $\Fl_0=\Fl_0(\varphi,\mu)$ is 
denoted by $\FD_{\Fl_0}(\varphi,\mu)$ as in Definition \ref{defn:pd}. We call $\FD_{\Fl_0}(\varphi,\mu)$ the {\sl first descent} of $(\varphi,\mu)$, which by definition 
consists of some enhanced $L$-parameters of the $F$-quasisplit $H_{\lfloor(\frak{n}-\Fl_0)/2\rfloor}^*(F)$ 
(as in \eqref{rp} with $p_1=\Fl_0$). 

Theorem \ref{thm:DFD}, which is \cite[Theorem 5.3]{JLZ22}, asserts that 
the first descent $\FD_{\Fl_0}(\varphi,\mu)$ of $(\varphi,\mu)$ consists only 
such enhanced $L$-parameters $(\phi,\nu)$ of $H_{\lfloor(\frak{n}-\Fl_0)/2\rfloor}^*(F)$ 
that $\phi$ are discrete $L$-parameters of $H_{\lfloor(\frak{n}-\Fl_0)/2\rfloor}^*(F)$. 
This motivates the following theorem, which is one of the basic properties that relate the descent on the representation side (or the spectral side) and that on the enhanced $L$-parameter side 
(or arithmetic side). 

\begin{thm}[Spectral-Arithmetic]\label{thm:SA-foi}
Given any generic $L$-parameter $\varphi$ of $G_n^*(F)$, for any 
$\pi\in\Pi_F[G_n^*]$, if 
\[
\pi=\pi_a(\varphi,\mu)
\]
holds for some $\mu\in\wh{\CS_\varphi}$, then the spectral first occurrence index $\Ff_\Fs(\pi)$ is equal to the arithmetic first occurrence index $\Ff_\Fa(\pi)$, i.e.
\[
\Ff_\Fs(\pi)=\Ff_\Fa(\pi),
\]
where $\Ff_\Fa(\pi):=\Fl_0(\varphi,\mu)$, the first occurrence index of $(\varphi,\mu)$.
\end{thm}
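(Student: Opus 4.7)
The plan is to establish the equality $\Ff_\Fs(\pi) = \Ff_\Fa(\pi)$ by proving both inequalities, using the Vogan version of the local Langlands correspondence as a bridge between the spectral side (irreducible admissible representations) and the arithmetic side (enhanced $L$-parameters), with the local Gan-Gross-Prasad conjecture controlling exactly when the branching multiplicity in \eqref{GGP-BL} is nonzero, and with the descent theory of \cite{JLZ22} encoding that same condition in terms of $L$-parameters.

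For the lower bound $\Ff_\Fs(\pi) \geq \Ff_\Fa(\pi)$, set $\Fl_0 = \Fl_0(\varphi,\mu)$. By Theorem \ref{thm:DFD}, the first descent $\FD_{\Fl_0}(\varphi,\mu)$ is nonempty and consists of enhanced $L$-parameters $(\phi,\nu)$ of $H^*_{\lfloor(\Fn-\Fl_0)/2\rfloor}(F)$ with $\phi$ a discrete $L$-parameter. Choose such a $(\phi,\nu)$; via the Vogan correspondence it corresponds (with Whittaker datum compatible with $a$) to an irreducible $\sigma = \pi_b(\phi,\nu)$ in the packet $\Pi_\phi(H_{\CO_{\Fl_0}})$ for a suitable $F$-rational orbit $\CO_{\Fl_0}\subset\CO_{\Fl_0}^\st$, where the pure inner form is read off from the character $\nu$ of $\CS_\phi$. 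The descent in \cite{JLZ22} is engineered precisely so that the local root number compatibility between $(\varphi,\mu)$ and $(\phi,\nu)$ predicted by the local Gan--Gross--Prasad conjecture (Section \ref{ssec-LGGP}) is automatic. Hence \eqref{GGP-BL} is nonzero for this $\sigma$, so $\sigma^\vee \in \Spec_{\CO_{\Fl_0}}(\pi)$ and $\Ff_\Fs(\pi) \geq \Fl_0 = \Ff_\Fa(\pi)$.

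For the upper bound $\Ff_\Fs(\pi) \leq \Ff_\Fa(\pi)$, suppose toward contradiction that there exist $p_1 > \Fl_0$, an $F$-rational orbit $\CO_{p_1}\subset \CO_{p_1}^\st$, and $\sigma^\vee\in \Spec_{\CO_{p_1}}(\pi)$. Apply the Vogan correspondence to write $\sigma = \pi_b(\phi,\nu)$ for an enhanced $L$-parameter $(\phi,\nu)$ of $H_{\CO_{p_1}}$. The nonvanishing of \eqref{GGP-BL} forces, by the local Gan--Gross--Prasad conjecture, the root number criterion on the pair $\bigl((\varphi,\mu),(\phi,\nu)\bigr)$; by the explicit construction of the descent of enhanced $L$-parameters recalled in Section \ref{ssec-DELP}, this criterion is exactly the condition for $(\phi,\nu)$ to lie in $\FD_{p_1}(\varphi,\mu)$. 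Consequently $\FD_{p_1}(\varphi,\mu)\neq\emptyset$ for some $p_1>\Fl_0$, contradicting the definition of $\Fl_0 = \Fl_0(\varphi,\mu)$ as the maximal occurrence index. Thus $\Ff_\Fs(\pi) \leq \Ff_\Fa(\pi)$, and combining the two inequalities yields the theorem.

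The main obstacle is the bookkeeping linking the \emph{$F$-rational} orbits $\CO_{p_1}$ inside the stable orbit $\CO_{p_1}^\st$ on the spectral side with the \emph{pure inner form} data carried by the character $\nu \in \wh{\CS_\phi}$ on the arithmetic side. One must ensure that the Whittaker datum governed by $a\in\CZ$ is used consistently for both $\pi = \pi_a(\varphi,\mu)$ and the descended representation $\sigma$, so that the local Gan--Gross--Prasad dichotomy identifies the correct pure inner form and the correct $F$-rational orbit $\CO_{\Fl_0}$ realizing the first descent. Once this normalization is fixed, the two translations between nonvanishing of branching and membership in the descent are inverse bijections, and the equality $\Ff_\Fs(\pi)=\Ff_\Fa(\pi)$ follows.
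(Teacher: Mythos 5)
Your lower bound $\Ff_\Fa(\pi)\leq\Ff_\Fs(\pi)$ is essentially the paper's argument (via Proposition \ref{prop:sne}): pick $(\wh\phi,\wh\nu)\in\FD_{\Fl_0}(\varphi,\mu)$, produce $\sigma=\sigma_a(\phi,\nu)$, and invoke the Gan--Gross--Prasad dichotomy to get $\sigma^\vee\in\Spec_{\CO_{\Fl_0}}(\pi)$. That direction is fine.

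The upper bound has a genuine gap. You take an arbitrary $\sigma^\vee\in\Spec_{\CO_{p_1}}(\pi)$ and immediately assert that the nonvanishing of \eqref{GGP-BL} ``forces, by the local Gan--Gross--Prasad conjecture, the root number criterion'' and hence membership of $(\phi,\nu)$ in $\FD_{p_1}(\varphi,\mu)$. But $\Spec_{\CO_{p_1}}(\pi)$ is defined over all of $\Pi_F(H_{\CO_{p_1}})$: the representation $\sigma$ need not be tempered and need not even have a generic $L$-parameter. The GGP dichotomy in the form you use (unique distinguished pair in the Vogan packet, characterized by the twisted characters $\chi_{\varphi,\phi}$) is only available for generic --- and in its proved root-number form, essentially tempered --- parameters $\phi$; likewise the descent $\FD_{p_1}(\varphi,\mu)$ of Definition \ref{defn:pd} is only defined for $\phi\in\widetilde{\Phi}_\gen$. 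So the translation ``nonvanishing of branching $\Leftrightarrow$ membership in the descent'' that your contradiction hinges on simply is not licensed for a general element of the spectrum.

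The paper closes exactly this gap with Proposition \ref{prop:PI} (the most technical input of the paper): write $\sigma$ as the Langlands quotient of a standard module $\RI(\udl{s},\tau_1,\dots,\tau_t,\sigma_0)$ with $\sigma_0$ tempered; the nonvanishing for $\sigma$ implies nonvanishing for the standard module, and the multiplicity inequality of Proposition \ref{prop:PI} then yields $\sigma_0^\vee\in\Spec_{\CO_{p_1+2p_0}}(\pi)$ for some rational orbit at the \emph{larger} index $p_1+2p_0$. Taking $p_1=\Ff_\Fs(\pi)$, the maximality defining the spectral first occurrence index forces $p_0=0$, so $\sigma=\sigma_0$ is tempered; only then does the (tempered) GGP dichotomy apply to place $(\wh\phi,\wh\nu)$ in $\FD_{\Ff_\Fs(\pi)}(\varphi,\mu)$ and give $\Ff_\Fs(\pi)\leq\Fl_0(\varphi,\mu)$. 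Without this reduction your argument does not go through; with it, your contradiction framework becomes the paper's direct argument.
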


We are going to prove Theorem \ref{thm:SA-foi} in Section \ref{sec-PC-SAFOI}. 
Combining Theorem \ref{thm:SA-foi} with Theorem \ref{thm:AFD}, we obtain the following discreteness of the first descent spectrum, which is clearly a refinement of  Theorem~\ref{thm:DSS}. 

\begin{thm}[Discreteness of the First Descent Spectrum]\label{thm:ADS}
Given any generic $L$-parameter $\varphi$ of $G_n^*(F)$, for any $\pi\in\Pi_F[G_n^*]$, if 
\[
\pi=\pi_a(\varphi,\mu)
\]
holds for some $\mu\in\wh{\CS_\varphi}$, then the first descent spectrum $\Spec_{\Ff_\Fs}(\pi)$
consists exactly of the discrete series representations whose enhanced $L$-parameters belong to the first descent $\FD_{\Fl_0}(\varphi,\mu)$ of $(\varphi,\mu)$.
\end{thm}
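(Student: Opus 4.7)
The plan is to combine the identification of the two first-occurrence indices (Theorem \ref{thm:SA-foi}) with the structural description of the first descent of enhanced $L$-parameters (Theorem \ref{thm:DFD}), bridged by the local Gan-Gross-Prasad conjecture and the Vogan version of the local Langlands correspondence. The overall strategy is to translate the branching problem from the spectral side to the arithmetic side at the first occurrence index, where the classification becomes transparent.

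First I would invoke Theorem \ref{thm:SA-foi} to replace $\Ff_\Fs(\pi)$ by $\Ff_\Fa(\pi)=\Fl_0(\varphi,\mu)$, so that the spectral first occurrence lives at precisely the arithmetic first occurrence index $\Fl_0$. Fix an $F$-rational nilpotent orbit $\CO_{\Fl_0}\subset\CO_{\Fl_0}^\st$ for which $\Spec_{\CO_{\Fl_0}}(\pi)$ is nonempty, and let $\sigma^\vee\in\Spec_{\CO_{\Fl_0}}(\pi)$, viewed as a representation of $H_{\CO_{\Fl_0}}(F)$ whose $F$-quasisplit form is $H_{\lfloor(\Fn-\Fl_0)/2\rfloor}^*$ as in \eqref{rp}. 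Writing $\iota_a(\sigma^\vee)=(\phi,\nu)$, the local Gan-Gross-Prasad conjecture (Section \ref{ssec-LGGP}) expresses the non-vanishing in \eqref{GGP-BL} as an explicit arithmetic condition on the pair of enhanced $L$-parameters $(\varphi,\mu)$ and $(\phi,\nu)$. By the very definition of the descent of an enhanced $L$-parameter (Definition \ref{defn:pd} and Section \ref{ssec-DELP}), this condition at the index $p_1=\Fl_0$ is equivalent to saying that $(\phi,\nu)$ lies in the first descent $\FD_{\Fl_0}(\varphi,\mu)$.

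Running this translation in both directions yields the set-theoretic equality
\[
\iota_a\bigl(\Spec_{\Ff_\Fs}(\pi)\bigr)=\FD_{\Fl_0}(\varphi,\mu).
\]
The forward inclusion is the arithmetic reformulation of the nonvanishing of \eqref{GGP-BL} described above; the reverse inclusion requires that every enhanced parameter in $\FD_{\Fl_0}(\varphi,\mu)$ be $H_{\CO_{\Fl_0}}$-relevant for some choice of $\CO_{\Fl_0}\subset\CO_{\Fl_0}^\st$, so that the corresponding $\sigma^\vee$ exists in $\Pi_F(H_{\CO_{\Fl_0}})$ and is picked up by \eqref{specp1}. Once this bijective translation is in hand, applying Theorem \ref{thm:DFD} (i.e.\ \cite[Theorem 5.3]{JLZ22}) shows that every $(\phi,\nu)\in\FD_{\Fl_0}(\varphi,\mu)$ has $\phi$ a discrete $L$-parameter; hence the corresponding $\sigma^\vee$ is a discrete series representation. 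This gives exactly the assertion of Theorem \ref{thm:ADS}.

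The main obstacle is establishing the reverse inclusion $\FD_{\Fl_0}(\varphi,\mu)\subset\iota_a(\Spec_{\Ff_\Fs}(\pi))$, namely showing that every enhanced $L$-parameter in the first descent is actually realized by some $\sigma^\vee$ in the first descent spectrum. This forces one to exhibit a compatible $F$-pure inner form $H_{\CO_{\Fl_0}}$ (equivalently, a matching $F$-rational nilpotent orbit in $\CO_{\Fl_0}^\st$) for each $(\phi,\nu)\in\FD_{\Fl_0}(\varphi,\mu)$, and then to realize the corresponding Hom-space as nontrivial. This amounts to the non-vanishing half of the local Gan-Gross-Prasad conjecture on the relevant pure inner forms, together with the matching of Whittaker data via the parameter $a\in\CZ$; both ingredients are supplied by the Vogan packet formalism of Section \ref{ssec-LVP} and the properties of descents worked out in \cite{JLZ22}. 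The uniqueness of the first occurrence index and the discreteness provided by Theorem \ref{thm:DFD} guarantee that no extraneous $\sigma^\vee$ with higher occurrence index can contaminate $\Spec_{\Ff_\Fs}(\pi)$, completing the proof.
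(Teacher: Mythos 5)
Your overall architecture (reduce to the arithmetic side via Theorem \ref{thm:SA-foi}, then quote Theorem \ref{thm:DFD} for discreteness, with the local Gan--Gross--Prasad conjecture and the Vogan packet formalism as the dictionary) matches the paper's, and your treatment of the reverse inclusion $\FD_{\Fl_0}(\varphi,\mu)\subset\iota_a(\Spec_{\Ff_\Fs}(\pi))$ is essentially the paper's: one produces the distinguished pair $(\pi,\sigma)$ in the Vogan packet $\Pi_{\varphi\otimes\phi}$ on the relevant pure inner form. That direction is in fact the easy one.

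The genuine gap is in your forward inclusion. You write $\iota_a(\sigma^\vee)=(\phi,\nu)$ for an arbitrary $\sigma^\vee\in\Spec_{\CO_{\Fl_0}}(\pi)$ and then claim that the local Gan--Gross--Prasad conjecture converts the nonvanishing of \eqref{GGP-BL} into the arithmetic condition of Definition \ref{defn:pd}. But the spectrum $\Spec_{\CO_{p_1}}(\pi)$ is defined over all of $\Pi_F(H_{\CO_{p_1}})$ with no genericity hypothesis on $\sigma$, whereas the local Gan--Gross--Prasad conjecture is a theorem only for generic (tempered, in the form used here) $L$-parameters, and the descent $\FD_{\Fl}(\varphi,\mu)$ is only defined for $\phi\in\widetilde{\Phi}_\gen$. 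So the "translation in both directions" is not available until you have first shown that every $\sigma^\vee$ occurring at the first occurrence index is tempered. This is exactly where the paper's most technical input enters: one realizes $\sigma$ as the Langlands quotient of a standard module $\RI(\udl{s},\tau_1,\dots,\tau_t,\sigma_0)$ with $\sigma_0$ tempered, observes that nonvanishing of $\Hom_{H_{\CO_{\Fl_0}}(F)}(\CJ_{\CO_{\Fl_0}}(\pi)\,\wh\otimes\,\sigma,\mathbbm{1})$ forces the same for the standard module, and then applies Proposition \ref{prop:PI} to conclude that $\sigma_0^\vee$ occurs at the index $\Fl_0+2p_0$; the maximality defining the first occurrence index then forces $p_0=0$, i.e.\ $\sigma=\sigma_0$ is tempered. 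Only after this step may one apply the proven Gan--Gross--Prasad conjecture and Theorem \ref{thm:DFD} to conclude discreteness. Your closing remark that "no extraneous $\sigma^\vee$ with higher occurrence index can contaminate" the spectrum gestures at this but does not supply the argument; without Proposition \ref{prop:PI} (or an equivalent control of nontempered $\sigma$), the forward inclusion is unproved.
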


As a consequence, we are able to establish the following submodule theorem, whose proof is given in Section \ref{ssec-ST}.

\begin{thm}[Submodule]\label{thm:SM}
For any given $\pi\in\Pi_F(G_n)$ with generic $L$-parameter, there exists an integer $p_1$ 
with $0<p_1\leq\Fn$, such that 
the representation $\pi$ can be embedded as an irreducible submodule into the following 
induced representation
\[
\Ind^{G_n(F)}_{R_{\CO_{p_1}}(F)}(\sigma^\vee\otimes\theta_{\CO_{p_1}}),
\]
where $\sigma$ is an irreducible discrete series representation of $H_{\CO_{p_1}}(F)$, and $(R_{\CO_{p_1}}, \theta_{\CO_{p_1}})$ is given by \eqref{eq:submodule}. 
\end{thm}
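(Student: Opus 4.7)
The plan is to deduce Theorem \ref{thm:SM} directly from Theorem \ref{thm:ADS} via the universal property of the twisted Jacquet module and Frobenius reciprocity; the essential input is that the first descent spectrum is both non-empty and concentrated on discrete series, which are guaranteed by Proposition \ref{prop:sne} together with Theorem \ref{thm:ADS}. I would take $p_1:=\Ff_\Fs(\pi)$ to be the spectral first occurrence index, which by these results yields an $F$-rational nilpotent orbit $\CO_{p_1}\subset\CO_{p_1}^\st$ and an irreducible discrete series representation $\sigma$ of $H_{\CO_{p_1}}(F)$ with $\sigma^\vee\in\Spec_{\CO_{p_1}}(\pi)$, equivalently
\[
\Hom_{H_{\CO_{p_1}}(F)}\bigl(\CJ_{\CO_{p_1}}(\pi)\,\wh\otimes\,\sigma,\,\mathbbm{1}\bigr)\neq 0.
\]

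I would then convert this non-vanishing into the desired embedding by a short chain of standard identifications. First, reinterpret the invariant pairing above as a nonzero element of $\Hom_{H_{\CO_{p_1}}(F)}(\CJ_{\CO_{p_1}}(\pi),\sigma^\vee)$. Second, invoke the universal property of the twisted Jacquet module --- in either the Bessel or Fourier--Jacobi setting, the latter absorbing the relevant Weil representation into the construction of $\CJ_{\CO_{p_1}}$ --- which realizes $\CJ_{\CO_{p_1}}(\pi)$ as the maximal quotient of $\pi$ on which the unipotent part of $R_{\CO_{p_1}}(F)$ acts through the character $\theta_{\CO_{p_1}}$; this identifies the Hom above with $\Hom_{R_{\CO_{p_1}}(F)}(\pi,\sigma^\vee\otimes\theta_{\CO_{p_1}})$. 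Third, apply Frobenius reciprocity for smooth induction (in its Casselman--Wallach form when $F$ is archimedean) to produce a nonzero $G_n(F)$-equivariant map
\[
\pi\longrightarrow \Ind^{G_n(F)}_{R_{\CO_{p_1}}(F)}\bigl(\sigma^\vee\otimes\theta_{\CO_{p_1}}\bigr),
\]
which must be an embedding as an irreducible submodule because $\pi$ is irreducible.

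The main obstacle I anticipate is technical rather than conceptual and lies entirely in the archimedean case: one must verify that $\CJ_{\CO_{p_1}}(\pi)$ is realized as a bona fide Casselman--Wallach (or at least smoothly admissible, Hausdorff) representation of $H_{\CO_{p_1}}(F)$ so that the passage $\sigma\leftrightarrow\sigma^\vee$ and the universal-property argument remain valid in the appropriate smooth category, and that Frobenius reciprocity is applied in the correct topological form for the non-reductive subgroup $R_{\CO_{p_1}}(F)$. These analytic subtleties are intrinsic to the very definition of $\Spec_{\CO_{p_1}}(\pi)$ adopted earlier in the paper, so once they have been handled there, no additional input beyond Theorem \ref{thm:ADS} is needed to conclude.
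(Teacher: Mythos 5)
Your proposal is correct and follows essentially the same route as the paper: the paper likewise takes $p_1=\Ff_\Fs(\pi)=\Ff_\Fa(\pi)$, invokes the discreteness of the first descent spectrum (Theorems \ref{thm:SA-foi} and \ref{thm:AFD}, i.e.\ Theorem \ref{thm:ADS}), converts the non-vanishing pairing into $\Hom_{H_{\CO_{p_1}}(F)}(\CJ_{\CO_{p_1}}(\pi),\sigma^\vee)\neq 0$ (its Lemma \ref{frob}, citing \cite[Lemma 2.2.22]{Ch23} for the archimedean case you flag), and concludes by Frobenius reciprocity. The archimedean subtleties you anticipate are exactly the ones the paper delegates to that lemma.
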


Theorem \ref{thm:ADS} is a main result of this paper and is the extension of the main result (\cite[Theorem 1.7]{JZ18}) to all classical groups (considered in the paper) over all local fields of characteristic zero. We would like to point out that the method used in this paper to prove Theorem \ref{thm:ADS} is different from that used in the proof of \cite[Theorem 1.7]{JZ18}. Just as in \cite{JZ18} and in the local Gan-Gross-Prasad conjecture, our results determine the structure of the representations occurring in the first descent spectrum $\Spec_{\Ff_\Fs}(\pi)$, without 
full information about the twisted Jacquet modules $\CJ_{\CO_{p_1}}(\pi)$, which is expected to be a difficult problem in the understanding of the branching problem for infinite-dimensional representations. 

\quad

The paper is organized as follows. Section \ref{sec-CGWF} studies the local descent theory for representation data. Section \ref{ssec-CGIF} recalls the classical groups and their pure inner forms for the setting of the local Gan-Gross-Prasad conjecture from \cite{GGP12, JZ18, JZ20, JLZ22}. Section \ref{ssec-TJM-p1} recalls from \cite{JLZ22} the definition of 
twisted Jacquet modules related to the local descent theory. Based on the non-vanishing in Proposition \ref{prop:sne}, we are able to define the notion of the {\sl spectral first occurrence index} of $\pi$. In Section \ref{ssec-SPI}, we discuss the relation between local descent and parabolic induction and formulate the most technical result of the paper in 
Proposition \ref{prop:PI}. Section \ref{sec-AD} considers the local descent theory for arithmetic data. Section \ref{ssec-LVP} recalls from \cite{JLZ22} the basic structure of enhanced 
local $L$-parameters involved in this paper and basic facts related to the local Vogan packets. Section \ref{ssec-LGGP} recalls from \cite{GGP12} the local Gan-Gross-Prasad 
conjecture for all classical groups and from \cite{JLZ22} the relevant explicit arithmetic data associated with the local Gan-Gross-Prasad conjecture. The relevant twisted distinguished characters are the most important ingredient in the local Gan-Gross-Prasad conjecture and are recalled in Section \ref{ssec-DC}. The descent of the enhanced local 
$L$-parameters that was first considered in \cite{JZ18} for special orthogonal groups over $p$-adic local fields and was developed in \cite{JLZ22} for all classical groups over all local fields of characteristic zero is recalled in Section \ref{ssec-DELP}. Based on the tower property (Proposition \ref{prop:TP}), we are able to define the notion of the 
{\sl arithmetic first index} of $\pi$. The main result in the local descent theory of arithmetic data is Theorem \ref{thm:DFD}. which was proved in \cite{JLZ22}. Section \ref{ssec-PSNE} proves Proposition \ref{prop:sne}. Section \ref{sec-SDAFD} proves the main results of the paper. Section \ref{ssec-afd} establishes the discreteness of the arithmetic first descent (Theorem \ref{thm:AFD}). Combined with Theorem \ref{thm:SA-foi}, which is proved in Section \ref{sec-PC-SAFOI}, we establish Theorem \ref{thm:ADS}. Finally, Section 
\ref{ssec-ST} proves Theorem \ref{thm:SM}.  The last section (Section \ref{sec-PPI}) is devoted to the proof of Proposition \ref{prop:PI}. In fact, we prove a stronger result, 
which is Proposition \ref{pro: multiplicity}.
According to \cite{MW12, GI16, Ch23}, it remains to prove Proposition \ref{pro: multiplicity} in the Fourier-Jacobi cases when $F$ is non-archimedean, which is done in 
Proposition \ref{prop: three ingredients}, following the method of \cite{MW12}.

\section{Classical Groups and Algebraic Wavefront Sets }\label{sec-CGWF}


\subsection{Classical groups and their pure inner forms}\label{ssec-CGIF}
The classical groups considered in this paper include the unitary groups  $\RU_{\Fn}$, special orthogonal groups $\SO_{\Fn}$, symplectic groups $\Sp_{2n}$ and metaplectic groups $\Mp_{2n}$,
following the notations in \cite{JLZ22} and also in \cite{JZ18} and \cite{JZ20}, which are compatible with those in \cite{GGP12}. 

Let $F$ be a local field of characteristic zero. As in \cite{JLZ22}, we assume that $F\neq \BC$. 
Thus if $F$ is archimedean, we mean that $F$ is real; otherwise, we mean that $F$ is $p$-adic, which is a finite extension of the field $\BQ_p$ of $p$-adic numbers for some prime $p$.
Let $F(\delta)$ be a quadratic field extension of $F$, with $\delta=\sqrt{d}$ for a non-square $d\in F^\times$.
Let $E$ be either $F$ or $F(\delta)$.
Denote by $\Fc\colon x\mapsto \bar{x}$ \label{pg:iota} the unique nontrivial element in the Galois group $\Gal(E/F)$ if $E\ne F$; and $\Fc={\rm id}_E$, $\bar{x}=x$ if $E=F$.

Let $(V,q_V)$ be an $\Fn$-dimensional vector space $V$ over $E$, equipped with a  non-degenerate $\epsilon$-Hermitian form
$q=q_V$, where $\epsilon=\pm 1$.
More precisely, if $E=F$, $q$ is symmetric or symplectic; and
if $E=F(\delta)\neq F$, $q$ is Hermitian or skew-Hermitian.
Write $\eps_q=-1$ if $q$ is symplectic or skew-Hermitian; otherwise $\eps_q=1$.
Denote by $G_n=\Isom(V,q)^\circ$ (or $\Mp_{2n}(F)$ in the metaplectic case) the identity connected component of
the isometry group of the space $(V,q)$, with $n=\lfloor\frac{\Fn}{2}\rfloor$.
Let $\Fr$ be the Witt index of $(V,q)$
and $(V_{\an},q)$ be the $F$-anisotropic kernel of $(V,q)$ with dimension $\Fd_0$.
Then the $F$-rank of $G_n$ is $\Fr$ and $\Fd_0=\Fn-2\Fr$. As in \cite[Section 2]{JLZ22}, 
the following 
$$
V=V^{+}\oplus V_{\an}\oplus V^{-}
$$
is a polar decomposition for $V$, where $V^{\pm}$ are maximal totally isotropic subspaces and dual to each other.
Take a basis $\{e_{\pm 1},\dots,e_{\pm \Fr}\}$ for $V^{\pm}$ such that
$$
q(e_{i},e_{-j})=\delta_{i,j}
$$
for all $1\leq i,j\leq\Fr$.
Choose an orthogonal basis $\{e'_{1},\dots,e'_{\Fd_0}\}$ of $V_{\an}$ and put
$$
d_i=q(e'_{i},e'_{i})\in F^\times \text{ or }\delta\cdot F^\times, \text{ for }1\leq i\leq \Fd_0.
$$
Note that if $(V,q)$ is a Hermitian space, then
$(V,\delta \cdot q)$ is skew-Hermitian and $\Isom(V,q)=\Isom(v,\delta\cdot q)$ gives us the identical unitary groups.
However, for the induction purpose, we will consider both cases as in \cite{JLZ22}.
If $(V,q)$ is symplectic, then $\Fn=2n=2\Fr$ and $V_{\an}=\{0\}$.
We put the above bases together in the following order to form a basis of $(V,q)$
\begin{equation}\label{eq:basis}
\FB\colon\quad e_{1},\dots,e_{\Fr},e'_{1},\dots,e'_{\Fd_0},e_{-\Fr},\dots,e_{-1},
\end{equation}
and fix the following full isotropic flag in $(V,q)$:
$$
\Span\{e_{1}\}\subset\Span\{e_{1},e_{2}\}\subset
\cdots\subset
\Span\{e_{1},\dots,e_{\Fr}\},
$$
which defines a minimal parabolic $F$-subgroup $P_0$ of $G_n$.
With respect to the order of the basis in \eqref{eq:basis}, the group $G_n$ is also defined with respect to the following matrix:
\begin{equation} \label{eq:J}
J_{\Fr}^\Fn=\begin{pmatrix}
&&w_\Fr\\&J_{0}^{\Fd_0}&\\ \epsilon_q w_{\Fr}&&
\end{pmatrix}_{\Fn\times\Fn}
\text{ where }
w_\Fr=\begin{pmatrix}
&w_{\Fr-1}\\1&	
\end{pmatrix}_{\Fr\times \Fr},
\end{equation}
which is defined inductively, and
$J_{0}^{\Fd_0}=\diag\{d_{1},\dots,d_{\Fd_0}\}$.
Moreover, the Lie algebra $\Fg_n$ of $G_n$ is defined by
\[
\Fg_n=\{A\in {\rm {End}}_E(V) \mid \bar{A}^{t}J_{\Fr}^\Fn+ J_{\Fr}^\Fn A=0\},	
\]
with the Lie bracket $[A,B]=AB-BA$.
For simplicity, set $A^*=(J_{\Fr}^\Fn)^{-1} \bar{A}^{t} J_{\Fr}^\Fn$.
As in \cite[Section 2.1]{JLZ22}, we take the following $\Ad(G_n)$-invariant non-degenerate $F$-bilinear form $\kappa$ on $\Fg_n\times \Fg_n$:
\[
\kappa(A,B)=\tr(AB^*)/2=\tr(A^*B)/2.	
\]
Alternatively, $\kappa$ can be computed by 
\begin{equation}\label{eq:kappa}
2\kappa(A,B)= \sum_{i=1}^{\Fr}\apair{A(e_{i}),B(e_{-i})}+\epsilon_q \apair{A(e_{-i}),B(e_{i})}+\sum_{j=1}^{\Fd_0}\frac{\apair{A(e'_{j}),B(e'_{j})}}{\apair{e'_{j},e'_{j}}}.	
\end{equation}
Note that this $\Ad(G_n)$-invariant symmetric bilinear form is the same as the one in \cite{GZ14} and is proportional to the Killing form.
For an $\eps$-Hermitian space $V$ of dimension $\Fn$, we define its discriminant by
\begin{equation}
\disc(V)=(-1)^{\frac{\Fn(\Fn-1)}{2}}\det(V) \in   \left\{ \begin{array}{ll}  \delta^\Fn \cdot F^\times /\BN E^\times,	& \textrm{if }E=F(\delta)\textrm{ and }\epsilon = -1, \\
F^\times / \BN E^\times, & \textrm{otherwise,}
\end{array}\right.
\end{equation}
where $\BN E^\times:=\{x\bar{x} \mid x\in E^\times\}$, unless $E=F$
 and $V$ is a symplectic space in which case this definition is not applicable.

\subsection{Twisted Jacquet modules associated with $[p_1,1^{\Fn-p_1}]$}\label{ssec-TJM-p1}

We review here the discussion in \cite[Section 4.2]{JLZ22}, which specializes to the partition of type $[p_1,1^{\Fn-p_1}]$. 

Denote by $\CO^{\st}_{p_1}$ the unique $F$-stable nilpotent orbit associated with the partition $[p_1,1^{\Fn-p_1}]$.
To parameterize the $F$-rational nilpotent orbits in $\CO^{\st}_{p_1}$,
we need to assign  an $\eps_1$-Hermitian form $(V^h_{(p_1)},q^h_{(p_1)})$ and an $\eps$-Hermitian form $(V_{(1)},q_{(1)})$
where $\eps_1=(-1)^{p_1-1}\eps$ and $\dim V^h_{(p_1)}=1$.
By definition,  $V_{(p_1)}$ has the polar decomposition $V_{(p_1)}^+ \oplus \Fe\oplus V_{(p_1)}^-$,
where $\Fe$ is an anisotropic line if $p_1$ is odd, and is zero otherwise, and
$V_{(p_1)}^+$ and $V_{(p_1)}^-$ are dual to each other.
In particular when $p_1$ is odd,  $q_{(p_1)}$ can be determined by $q_{(p_1)}\vert_{\Fe}=q^h_{(p_1)}$.
The constructed Young tableaux for $(V,q)$ with the partition $[p_1,1^{\Fn-p_1}]$ is admissible if and only if
\begin{equation}\label{eq:admissible-Young-p1}
(V,q)\cong (V_{(p_1)}^+\oplus \Fe \oplus V_{(p_1)}^-,q_{(p_1)})\oplus (V_{(1)},q_{(1)}). 	
\end{equation}
We refer to \cite[Section 3.1]{GZ14} and \cite[Section 2.2]{JLZ22} for the detailed discussion in general situation. 

From \cite[Section 4.2]{JLZ22},  in order for  $\CO^{\st}_{p_1}$ to be defined over $F$ (i.e., $\CO^{\st}_{p_1}\cap \CN_F(\Fg_n)\ne \varnothing$), the following conditions must be satisfied:
\begin{itemize}
    \item If $V$ is orthogonal and $\Fn\ne 2\Fr$, then $p_1$ is odd, and
$\CO^{\st}_{p_1}$ is defined over $F$ if and only if $p_1\leq 2\Fr+1$.
\item If $V$ is orthogonal and $\Fn=2\Fr$, then $p_1$ is odd, and
$\CO^{\st}_{p_1}$ is defined over $F$ if and only if $p_1\leq 2\Fr-1$.
\item If $V$ is symplectic, then $p_1$ is even, and $\CO^{\st}_{p_1}$ is defined over $F$ for all $p_1\leq \Fn$.
\item If $G_n$ is unitary, then there is no constrain  on $p_1$, and $\CO^{\st}_{p_1}$ is defined over $F$ if and only if $p_1\leq 2\Fr+1$.
\end{itemize}
From \cite[Section 4.2]{JLZ22}, there is an explicit choice of ${\frak {sl}}_2$-triple $\{X,\hbar, Y\}$ for each $F$-rational orbit in $\CO^{\st}_{p_1}$ with $X\in \CO^{\st}_{p_1}$.
With the basis given in \eqref{eq:basis}, the following is a choice of $\hbar\in \Fg_n$, for $x\in \FB$,
\[
\hbar(x)= \begin{cases}
	\mp(p_1-2i+1)e_{\pm i}  &\text{ if $x=e_{\pm i}$ and } 1\leq i\leq  \lfloor\frac{p_1}{2}\rfloor,\\
	0 &\text{ otherwise,}
\end{cases}
\]
which is uniform for all $F$-rational orbits. 
With such a chosen $\hbar$, we obtain the following eigenspaces 
\begin{align}\label{eigensp}
    V_j=\{v\in V\ \mid \hbar(v)=jv\}
\end{align}
for $j\in\BZ$. 
For $v,w\in V$, define the element $A_{v,w}$ in $\Fg_n\subset\End(V)$ by
\begin{equation}
A_{v,w}(x)= \eps q(x,v)w- q(x,w)v.		
\end{equation}
Throughout this paper, we choose and fix a vector $e$ in $V$ based on the parity of $p_1$:
\begin{itemize}
	\item if $p_1$ is odd, we take $e$ to be an anisotropic vector in $V_0$;
	\item if $p_1$ is even, we take $e=\frac{\eps}{2}e_{-m}$,
\end{itemize}
where 
$m=\lfloor\frac{p_1}{2}\rfloor$.
Note that if $p_1$ is odd, then $V$ is not symplectic and there always exists anisotropic vectors if $V_{0}$ is not zero.
With the fixed vector $e$, \cite[(4.4)]{JLZ22} defines 
\begin{equation} \label{xe,var}
X_{e,\varsigma}=A_{ e, \bar{\varsigma} e_{-m}}+\sum_{i=1}^{m-1}A_{ e_{i+1}, \bar{\varsigma} e_{-i}}.	
\end{equation}
for $\varsigma\in E^\times$, which implies that $X_{e,\varsigma}(e_m)= \bar{\varsigma} e_{-m}$. 
After completing $X_{e,\varsigma}$ and $\hbar$ to an ${\frak {sl}}_2$-triple in $\Fg_n$, we have 
that 
\[V_{(p_1)}= {\rm {Span}}\{e_1,e_2,\dots,e_{m}\}\oplus Ee \oplus {\rm {Span}}\{e_{-m},e_{-(m-1)},\dots,e_{-1}\}\]
and
$V_{(1)}=V_{(1)}^h=V_{0}\cap e^\perp$.
Note that $V_{(p_1)}^h=Ee_{-1}$ and
\begin{equation}\label{eq:X-highest-weight}
X^{p_1-1}(e_{1})=\begin{cases}
	(-\varsigma\bar{\varsigma})^{m-1}\bar{\varsigma} e_{-1} &\text{ if $p_1$ is even,}\\
	\eps\cdot(-\varsigma\bar{\varsigma})^m\apair{e,e}e_{-1} &\text{ if $p_1$ is odd.}\\	
\end{cases}	
\end{equation}
From \eqref{eq:X-highest-weight}, we deduce that $q^{h}_{(p_1)}(e_{-1},e_{-1})$ is equivalent to the one-dimensional $(-1)^{p_1-1}\eps$-Hermitian form defined by
\begin{equation} \label{q'-form}
q'_{e,\varsigma}(e_{-1},e_{-1}):= \begin{cases}
	(-1)^{m-1}\varsigma &\text{ if $p_1$ is even,}\\
	(-1)^m \apair{e,e} &\text{ if $p_1$ is odd.}\\	
\end{cases}
\end{equation}
Denote the resulting  $\epsilon$-Hermitian form on $V_{(p_1)}\simeq Ee_{-1}\otimes F^{p_1}$ by
\begin{equation}\label{x-form}
q_{(p_1), e, \varsigma}:=q'_{e,\varsigma}\otimes q_{p_1}.
\end{equation}
Therefore, we obtain the admissible Young tableau associated to $X_{e,\varsigma}$:
\begin{equation}\label{eq:X-p-1}
([p_1,1^{\Fn-p_1}],\{(Ee_{-1},q'_{e,\varsigma}),(V_{0}\cap e^{\perp},q_V\vert_{V_{0}\cap e^{\perp}})\}).	
\end{equation}
We refer to \cite[Section 2.2]{JLZ22} for the details in the general situation. 

The twisted Jacquet module associated to the partition of type $[p_1,1^{\Fn-p_1}]$ splits into two types: the Bessel type and the Fourier-Jacobi type, according to $p_1$ is odd and even,
respectively. Note that when $p_1=1$ the corresponding stable nilpotent orbit is the zero nilpotent orbit.
However, the twisted Jacquet modules of the Bessel type and Fourier-Jacobi type for the classical groups can still be defined for $p_1=1$ (\cite{GGP12}).

For $X_{e,\varsigma}$ as in \eqref{xe,var}, following \cite[(4.8)]{JLZ22}, we define 
\begin{equation} \label{psix}
\psi_{X_{e,\varsigma},2}(\exp (u))=\psi_0(\kappa(u, X_{e,\varsigma})),\quad u\in \Fu_{X_{e,\varsigma}, 2}:=\bigoplus_{i\leq -2}\Fg_{n, i}^{\hbar},
\end{equation}
where $\Fg_{n, i}^{\hbar}$ denotes the $i$-eigenspace of ${\rm ad}(\hbar)$ acting on $\Fg_n$.
Following \cite[Lemma 4.2]{JLZ22} for the explicit computation of $\kappa(u, X_{e,\varsigma})$, 
we obtain that
\begin{equation}\label{Mx}
M_{X_{e,\varsigma}}=\Isom(V_{(1)}, q),
\end{equation}
where $V_{(1)}=V_0\cap e^\perp$. Define
\[
\Fu_{X_{e,\varsigma}}:=\bigoplus_{i\leq -1}\Fg_{n,i}^{\hbar},\quad U_{X_{e,\varsigma}}:=\exp(\Fu_{X_{e,\varsigma}}),
\]
and we specify a representation $\omega_{\psi_{X_{e,\varsigma}}}$ of $U_{X_{e,\varsigma}}$ below. 

The {\sl twisted Jacquet modules of Bessel type} are given as follows. 
If $\epsilon=1$ and $p_1$ is odd, then $\Fu_{X_{e,\varsigma}}=\Fu_{X_{e,\varsigma},2}$ and we let $\omega_{\psi_{X_{e,\varsigma}}}=\psi_{X_{e,\varsigma},2}$. For a smooth representation $(\pi, V_\pi)$ of $G_n(F)$, we define its twisted Jacquet module with respect to $(U_{X_{e,\varsigma}}, \psi_{X_{e, \varsigma}})$
\begin{equation} \label{BF-coe}
\CJ_{X_{e, \varsigma}}(\pi)=V_\pi /\overline{\Span\{\pi(u)v-\psi_{X_{e, \varsigma},2}(u)v \mid  u\in U_{X_{e, \varsigma}}, v\in V_\pi\}},
\end{equation}
as a smooth module of $M^\circ_{X_{e,\varsigma}}$.

The {\sl twisted Jacquet modules of Fourier-Jacobi type} are given as follows. 
If $\epsilon=-1$ and $p_1$ is even, then we have the Heisenberg-oscillator representation, denoted by
$(\omega_{\psi_{X_{e,\varsigma}}}, V_{\psi_{X_{e, \varsigma}}})$, of $\wt{M}_{X_{e,\varsigma}}\ltimes U_{X_{e,\varsigma}}$. For a smooth representation
$(\pi, V_\pi)$ of $G_n(F)$ (or $\wt{G}_n(F)$), we define the $U_{X_{e, \varsigma}}$-coinvariants
\begin{equation} \label{FJ-coe}
\CJ_{X_{e, \varsigma}}(\pi)=V_\pi\,\wh{\otimes}\, V^\vee_{\psi_{X_{e,\varsigma}}}/\overline{\Span\{\pi\otimes \omega^\vee_{\psi_{X_{e,\varsigma}}}(u)v-v \mid  u\in U_{X_{e, \varsigma}}, v\in V_\pi
\,\wh{\otimes}\, V^\vee_{\psi_{X_{e,\varsigma}}}\}},
\end{equation}
as a smooth module of $\wt{M}_{X_{e,\varsigma}}$ (or $M_{X_{e,\varsigma}}$).

In the skew-Hermitian case, choose a splitting character $\xi: E^\times/\BN E^\times \to \BC^\times$ whose restriction to $F^\times$ is the quadratic character $\omega_{E/F}$. Then we may and do remove the double cover and obtain the Heisenberg-oscillator representation $(\omega_{\psi_{X_{e,\varsigma}},\xi},V_{\psi_{X_{e,\varsigma}},\xi})$ of the unitary group $M_{X_{e,\varsigma}}$ in the above.

\subsection{Spectral first occurrence}\label{ssec-SFO}

From the construction of the twisted Jacquet modules associated with the partition $[p_1,1^{\Fn-p_1}]$, the equivalence class of the module $\CJ_{X_{e, \varsigma}}(\pi)$ depends 
only on the $F$-rational nilpotent orbit $\CO_{e, \varsigma}\subset\CO_{p_1}^\st$. We may write 
$\CO_{p_1}=\CO_{e, \varsigma}$ for some pair $(e, \varsigma)$, and write 
\begin{equation}\label{LD-module}
    \CJ_{\CO_{p_1}}(\pi)=\CJ_{X_{e, \varsigma}}(\pi).
\end{equation}
To uniform the notations, we write $H_{\CO_{p_1}}$ for either $M_{X_{e,\varsigma}}$ or $\wt{M}_{X_{e,\varsigma}}$, whose $F$-quasisplit forms are given as follows:
\begin{equation} \label{relevantpair}
(G_n^*, H_{\lfloor(\frak{n}-p_1)/2\rfloor}^*)=
\begin{cases} 
(\SO_\frak{n}, \SO_{\frak{n}-p_1}), & \textrm{if }E=F\textrm{ and }\epsilon=1,\\
(\Sp_{2n}, \Mp_{2n-p_1}) \textrm{ or }(\Mp_{2n}, \Sp_{2n-p_1}), & \textrm{if }E=F\textrm{ and }\epsilon=-1,\\
(\RU_\frak{n}, \RU_{\frak{n}-p_1}), & \textrm{if }E=F(\delta).
\end{cases} 
\end{equation}
If $p_1=\Fn$, we regard $H_{\CO_{p_1}}^*$ as the trivial group. We also write
\begin{equation} \label{eq:submodule}
(R_{\CO_{p_1}}, \theta_{\CO_{p_1}}):= (H_{\CO_{p_1}}\ltimes U_{X_{e,\varsigma}}, \omega_{\psi_{X_{e,\varsigma}}}).
\end{equation}

For any $\pi\in\Pi_F(G_n)$, we consider the spectral structure of the module $\CJ_{\CO_{p_1}}(\pi)$ as representation of $H_{\CO_{p_1}}(F)$.
For any $\sigma\in\Pi_F(H_{\CO_{p_1}})$, we say that $\sigma^\vee$ {\sl occurs} in 
$\CJ_{\CO_{p_1}}(\pi)$ if 
\begin{equation}\label{spec}
    \Hom_{H_{\CO_{p_1}}(F)}(\CJ_{\CO_{p_1}}(\pi)\,\wh\otimes\,\sigma, \mathbbm{1})\neq 0. 
\end{equation}
Denote by $\Spec_{\CO_{p_1}}(\pi)$ the {\sl spectrum} of the module $\CJ_{\CO_{p_1}}(\pi)$, which is a set consisting of all $\sigma^\vee\in\Pi_F(H_{\CO_{p_1}})$ that occur in $\CJ_{\CO_{p_1}}(\pi)$. 

\begin{prop}\label{prop:sne}
If a representation $\pi\in\Pi_F(G_n)$ has a generic $L$-parameter, then there exists at least one integer $p_1$ with $0<p_1\leq\Fn$, such that the set $\Spec_{p_1}(\pi)$ that is 
the spectrum of $\pi$ at $p_1$ as defined in \eqref{specp1} is not empty. 
\end{prop}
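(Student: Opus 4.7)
The plan is to exhibit a nonempty spectrum at the \emph{arithmetic} first occurrence index $\Fl_0=\Fl_0(\varphi,\mu)$ of the enhanced $L$-parameter of $\pi$, and then transfer this nonvanishing to the representation side via the local Gan--Gross--Prasad conjecture. More concretely, I first invoke the Vogan version of the local Langlands correspondence (Section \ref{ssec-LVP}) to attach to $\pi$ a uniquely determined enhanced $L$-parameter $(\varphi,\mu)$, where $\varphi$ is the generic $L$-parameter of $G_n^*$ given by hypothesis and $\mu\in\wh{\CS_\varphi}$. By the tower property of descents of enhanced $L$-parameters (Proposition \ref{prop:TP}), the arithmetic first occurrence index $\Fl_0$ is well defined with $0<\Fl_0\leq \Fn$, and Theorem \ref{thm:DFD} (i.e.\ \cite[Theorem 5.3]{JLZ22}) guarantees that the first descent $\FD_{\Fl_0}(\varphi,\mu)$ is a nonempty set, each of whose members $(\phi,\nu)$ is an enhanced $L$-parameter of $H^*_{\lfloor(\Fn-\Fl_0)/2\rfloor}$ with $\phi$ a discrete $L$-parameter.

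Next, I fix one $(\phi,\nu)\in\FD_{\Fl_0}(\varphi,\mu)$. By the Vogan LLC for the group $H^*_{\lfloor(\Fn-\Fl_0)/2\rfloor}$, the pair $(\phi,\nu)$ corresponds to an irreducible discrete series representation $\sigma$ on one of the $F$-pure inner forms $H_{\CO_{\Fl_0}}$ of $H^*_{\lfloor(\Fn-\Fl_0)/2\rfloor}$, where the inner form is read off from the central character data of $\nu$ and determines uniquely an $F$-rational orbit $\CO_{\Fl_0}\subset\CO_{\Fl_0}^{\st}$ (the admissibility of the Young tableau in \eqref{eq:admissible-Young-p1} guarantees compatibility with the Hermitian structure of $V$, so that the relevant pair $(G_n,H_{\CO_{\Fl_0}})$ of \eqref{relevantpair} is genuinely realized over $F$).

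The crux is to show that the pair $(\pi,\sigma)$ realizes a nonzero element of $\Hom_{H_{\CO_{\Fl_0}}(F)}(\CJ_{\CO_{\Fl_0}}(\pi)\wh{\otimes}\sigma,\mathbbm{1})$. This is exactly what the descent theory of enhanced $L$-parameters was designed to ensure: by construction, any $(\phi,\nu)\in\FD_{\Fl_0}(\varphi,\mu)$ is a pair for which the twisted distinguished character condition of Section \ref{ssec-DC}, attached to the chosen Whittaker datum indexed by $a\in\CZ$, matches the pair $(\varphi,\mu)\boxtimes(\phi,\nu)$. The local Gan--Gross--Prasad conjecture (Section \ref{ssec-LGGP}) now translates this arithmetic matching into nonvanishing of the branching multiplicity. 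For generic $L$-parameters this conjecture is a theorem across all cases needed here, combining \cite{GGP12, MW12, GI16, Ch23} (and their archimedean counterparts), so I may apply it directly to conclude that the multiplicity is equal to one, hence nonzero.

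Putting the pieces together, $\sigma^\vee\in\Spec_{\CO_{\Fl_0}}(\pi)$ and consequently $\Spec_{\Fl_0}(\pi)\supset\Spec_{\CO_{\Fl_0}}(\pi)\neq\emptyset$, which proves the proposition with $p_1=\Fl_0$. The main obstacle is the bookkeeping verification that the pure inner form selected by $\nu$ is indeed of the form $H_{\CO_{\Fl_0}}$ arising from an admissible Young tableau for $(V,q)$ in the sense of \eqref{eq:X-p-1}; this requires checking that the one-dimensional Hermitian datum $q'_{e,\varsigma}$ in \eqref{q'-form}, together with $(V_0\cap e^\perp,q_V|_{V_0\cap e^\perp})$, can be chosen to realize precisely the discriminant/sign data attached to $\nu$. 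This is handled in \cite{JLZ22} via the explicit parametrization of $F$-rational orbits in $\CO^{\st}_{p_1}$ recalled in Section \ref{ssec-TJM-p1}, but it is the one step where the representation-theoretic and arithmetic sides must be carefully aligned.
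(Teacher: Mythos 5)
Your argument is correct and follows essentially the same route as the paper's own proof: pass to the enhanced $L$-parameter $(\varphi,\mu)$ via the Vogan correspondence, use the nonemptiness of a descent $\FD_{\Fl}(\varphi,\mu)$ to produce a partner $(\phi,\nu)$, and invoke the (now proven) local Gan--Gross--Prasad conjecture to convert the character matching of Definition \ref{defn:pd} into nonvanishing of the branching multiplicity. The only cosmetic differences are that you specialize to the first occurrence index $\Fl_0$ (so your $\sigma$ is discrete series, which the paper only extracts later in Theorem \ref{thm:AFD}), whereas the paper takes an arbitrary $\Fl$ with $\FD_\Fl(\varphi,\mu)\neq\emptyset$; and note that the existence of such an $\Fl$ (hence the well-definedness of $\Fl_0$) comes from \cite[Proposition 5.2]{JLZ22} rather than from the tower property or Theorem \ref{thm:DFD} alone.
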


The proof needs preparation of arithmetic data associated with $\pi\in\Pi_F(G_n)$ and will be given in Section \ref{ssec-PSNE}.

Assume that for a $\pi\in\Pi_F(G_n)$, there exists an integer $p_1$ with $0<p_1\leq \Fn$ such that the spectrum $\Spec_{\CO_{p_1}}(\pi)$ is non-empty. We define the {\sl spectral first occurrence index} $\Ff_\Fs=\Ff_\Fs(\pi)$ for such a $\pi$ to be 
\begin{equation}
    \Ff_\Fs=\Ff_\Fs(\pi)=\max\{p_1\mid 0<p_1\leq\Fn,\ \Spec_{\CO_{p_1}}(\pi)\neq \emptyset\}. 
\end{equation}
If the spectral first occurrence index $\Ff_\Fs=\Ff_\Fs(\pi)$ exists, we call the spectrum 
$\Spec_{\CO_{\Ff_\Fs}}(\pi)$ the {\sl spectral first descent} of $\pi$ (along the orbit $\CO_{\Ff_\Fs}$). 
We are going to understand the structure of the spectral first descent $\Spec_{\CO_{\Ff_\Fs}}(\pi)$ of $\pi$ by the descent of enhanced $L$-parameters as developed in 
\cite{JZ18} and \cite{JLZ22}.

\subsection{Spectrum and parabolic induction}\label{ssec-SPI}

Let $p_1$ be an integer with $0<p_1\leq\Fn$. We take $\sigma\in\Pi_F(H_{\lfloor(\frak{n}-p_1)/2\rfloor})$, 
where $H_{\lfloor(\frak{n}-p_1)/2\rfloor}$ is an $F$-pure inner form of 
$H_{\lfloor(\frak{n}-p_1)/2\rfloor}^*$.

When $H_{\lfloor(\frak{n}-p_1)/2\rfloor}$ is not metaplectic, write $\sigma$ as the Langlands quotient 
$\CL(\udl{s},\tau_1,\dots,\tau_t,\sigma_0)$
of the following standard module of $H_{\lfloor(\frak{n}-p_1)/2\rfloor}(F)$
\begin{equation}\label{SM1}
    \RI(\udl{s},\tau_1,\dots\tau_t,\sigma_0):=
    \Ind^{H_{\lfloor(\frak{n}-p_1)/2\rfloor}(F)}_{P(F)}
    (|\det|^{s_1}\tau_1\otimes\cdots\otimes|\det|^{s_t}\tau_t\otimes\sigma_0)
\end{equation}
where the Levi part of the standard parabolic subgroup $P$ is isomorphic to 
\[
\GL_{n_1}\times\cdots\times\GL_{n_t}\times H_{\lfloor(\frak{n}-p_1)/2\rfloor -p_0}
\]
with $p_0=n_1+\cdots+n_t$, and $\tau_j$ is an irreducible tempered representation of 
$\GL_{n_j}(F)$ for each $j=1,\dots,t$, 
$\sigma_0$ is an irreducible tempered representation of 
$H_{\lfloor(\frak{n}-p_1)/2\rfloor-p_0}(F)$, and $s_j\in\BR$ with 
$s_1 \geq \dots \geq s_t \geq 0$.  When $H_{\lfloor(\frak{n}-p_1)/2\rfloor}$ is metaplectic, from \cite{AB98}\cite{GS12}, 
we can write $\sigma$ as the  unique quotient $\CL(\udl{s},\tau_1,\dots,\tau_t,\sigma_0)$ of the following standard module:
\begin{equation}\label{SM-Mp}
\RI(\udl{s},\tau_1,\dots\tau_t,\sigma_0):=
    \Ind^{H_{\lfloor(\frak{n}-p_1)/2\rfloor}(F)}_{P(F)}
(|\det|^{s_1}\wt{\tau}_1\otimes\cdots\otimes|\det|^{s_t}\wt{\tau}_t\otimes\sigma_0)
\end{equation}
where the Levi part of the standard parabolic subgroup $P$ is isomorphic to 
\[
\wt{\GL}_{n_1}\times_{\pm 1}\cdots\times_{\pm1}\wt{\GL}_{n_t}\times_{\pm1} H_{\lfloor(\frak{n}-p_1)/2\rfloor -p_0}
\]
with $p_0=n_1+\cdots+n_t$, and $\wt{\tau}_j$ is the genuine representation representation of 
$\wt{\GL}_{n_j}(F)$ associated to an irreducible tempered representation $\tau_j$ of $\GL_{n_j}(F)$ with respect to the fixed character $\psi_F$ of $F$ for each $j=1,\dots,t$, 
$\sigma_0$ is an irreducible genuine tempered representation of 
$H_{\lfloor(\frak{n}-p_1)/2\rfloor-p_0}(F)$, and $s_j\in\BR$ with 
$s_1 \geq \dots \geq s_t \geq 0$.
\begin{prop}\label{prop:PI}
Assume that a representation $\pi\in\Pi_F(G_n)$ has a generic $L$-parameter. Let $p_1$ be an integer with $0<p_1\leq\Fn$. If the standard module 
\[
\RI(\udl{s},\tau_1,\dots\tau_t,\sigma_0):=
    \Ind^{H_{\lfloor(\frak{n}-p_1)/2\rfloor}(F)}_{P(F)}
    (|\det|^{s_1}\tau_1\otimes\cdots\otimes|\det|^{s_t}\tau_t\otimes\sigma_0)
\]
as given in \eqref{SM1} has the property that 
\[
\Hom_{H_{\CO_{p_1}}(F)}(\CJ_{\CO_{p_1}}(\pi)\,\wh\otimes\,\RI(\udl{s},\tau_1,\dots\tau_t,\sigma_0)
, \mathbbm{1})\neq 0
\]
for the $F$-rational nilpotent orbit $\CO_{p_1}$, then 
$\sigma_0^\vee \in\Spec_{\CO_{p_1+2p_0}}(\pi)$ for some $F$-rational nilpotent orbit 
$\CO_{p_1+2p_0}$. 
\end{prop}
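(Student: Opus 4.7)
The plan is to deduce Proposition \ref{prop:PI} from the stronger multiplicity identity in Proposition \ref{pro: multiplicity}. The overall approach is the geometric induction method of Moeglin--Waldspurger \cite{MW12}, adapted to the broader setting considered here. First I would recast the Hom space
\[
\Hom_{H_{\CO_{p_1}}(F)}\bigl(\CJ_{\CO_{p_1}}(\pi)\,\wh\otimes\,\RI(\udl{s},\tau_1,\dots,\tau_t,\sigma_0),\,\mathbbm{1}\bigr)
\]
using Frobenius reciprocity (or its Casselman--Wallach analogue when $F$ is archimedean) so that the parabolic induction from $P$ to $H_{\lfloor(\Fn-p_1)/2\rfloor}$ is transferred onto the twisted Jacquet side. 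This converts the problem into one concerning the Jacquet module of $\CJ_{\CO_{p_1}}(\pi)$ along the unipotent radical of $P$, taken as an $M(F)$-module for $M=\GL_{n_1}\times\cdots\times\GL_{n_t}\times H_{\lfloor(\Fn-p_1)/2\rfloor-p_0}$.

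The central step is a Bruhat--Mackey filtration of this restricted Jacquet module. The double coset space indexing the filtration corresponds to the $F$-rational ways a parabolic on the smaller group refines the partition datum $[p_1,1^{\Fn-p_1}]$. The open stratum produces, after identifying the Heisenberg--oscillator contribution in the Fourier--Jacobi case, the deeper twisted Jacquet module $\CJ_{\CO_{p_1+2p_0}}(\pi)$ tensored with the tempered pieces $\tau_1\otimes\cdots\otimes\tau_t$, while the non-open strata give subquotients whose $\GL$-exponents are twisted in a controlled way away from those of $|\det|^{s_1}\tau_1\otimes\cdots\otimes|\det|^{s_t}\tau_t$.

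The vanishing of every non-open contribution is the hardest step. It uses the temperedness of the $\tau_j$'s, the Langlands positivity $s_1\geq\cdots\geq s_t\geq 0$, and the genericity of the $L$-parameter of $\pi$ (which constrains the exponents that can appear on the descent side); combined with a leading-exponent/central-character argument, these rule out the closed strata. After this vanishing the original Hom space collapses to
\[
\Hom_{H_{\CO_{p_1+2p_0}}(F)}\bigl(\CJ_{\CO_{p_1+2p_0}}(\pi)\,\wh\otimes\,\sigma_0,\,\mathbbm{1}\bigr),
\]
whose non-vanishing is exactly the assertion $\sigma_0^\vee\in\Spec_{\CO_{p_1+2p_0}}(\pi)$ for a suitable $F$-rational orbit $\CO_{p_1+2p_0}\subset\CO_{p_1+2p_0}^{\st}$. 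The main obstacle is to execute this Bruhat--Mackey analysis uniformly across Bessel and Fourier--Jacobi cases and across all local fields of characteristic zero; the only genuinely new input required is the remaining Fourier--Jacobi case, to be supplied by Proposition \ref{prop: three ingredients} following the method of \cite{MW12}, while the other cases are already covered by \cite{MW12, GI16, Ch23}.
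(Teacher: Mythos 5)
Your overall strategy (Mackey--Bernstein--Zelevinsky filtration plus vanishing of the non-open contributions by an exponent comparison, with the non-archimedean Fourier--Jacobi case as the only genuinely new input) is the right framework, and you correctly observe that only the inequality $m(\pi,\RI(\udl{s},\tau_1,\dots,\tau_t,\sigma_0))\leq m(\pi,\sigma_0)$ is needed. However, there is a genuine gap in the mechanism you describe. You propose a \emph{one-sided} filtration: peel all the $\GL$ factors off $\sigma$ and argue that every non-open stratum (and every intermediate Bernstein--Zelevinsky derivative piece) dies. The vanishing lemma that kills these strata (Lemma \ref{lem: vanishing} in the paper) requires the exponent being peeled from $\sigma$ to dominate the largest exponent occurring in $\pi$; it fails when $s_1'<s_1$. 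This is exactly why the paper does \emph{not} run the filtration on $\sigma$ alone: it first invokes the classification of representations in generic $L$-packets to write $\pi$ itself as an irreducible full induced standard module $\RI(\udl{s}',\rho_1,\dots,\rho_{t'},\pi_0)$ from tempered data, and then proves the inequality by induction on the total number of nonzero exponents among $\udl{s}$ and $\udl{s}'$, at each step peeling a $\GL$ factor from \emph{whichever} of $\pi$ or $\sigma$ has the larger leading exponent (Part (2) of Proposition \ref{prop: three ingredients} applied after swapping the roles of the two representations when $s_1'<s_1$). Your appeal to ``genericity of the $L$-parameter of $\pi$'' gestures at this, but the genericity enters precisely through this standard-module realization of $\pi$, and without the alternating induction your vanishing step is simply false in half the cases.

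A second, smaller omission: in the Fourier--Jacobi setting the two groups in a single peeling step must have equal rank ($n^+=n^{\prime,+}$) for the double-coset analysis to take the form you describe, whereas in Proposition \ref{prop:PI} one always starts with $n^+\neq n^{\prime,+}$. The paper first reduces to the equal-rank case by inserting an auxiliary supercuspidal $\GL$ factor with exponent $0$ and invoking the corank-reduction result (\cite[Theorem 16.1]{GGP12}, Part (1) of Proposition \ref{prop: three ingredients}); the same device is also needed at every step of the induction to restore the equal-rank hypothesis after a factor has been peeled. This reduction is absent from your outline and cannot be skipped.
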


If the standard module $\RI(\udl{s},\tau_1,\dots\tau_t,\sigma_0)$ is irreducible, the result
of this proposition is a consequence of the local Gan-Gross-Prasad conjecture. 
The point is to prove this proposition when the standard module
$\RI(\udl{s},\tau_1,\dots\tau_t,\sigma_0)$ is reducible. In this case, we have to show that if
\[
\Hom_{H_{\CO_{p_1}}(F)}(\CJ_{\CO_{p_1}}(\pi)\,\wh\otimes\,\RI(\udl{s},\tau_1,\dots\tau_t,\sigma_0)
, \mathbbm{1})\neq 0
\]
for the $F$-rational nilpotent orbit $\CO_{p_1}$, then we must have that 
\[
    \Hom_{H_{\CO_{p_1+2p_0}}(F)}(\CJ_{\CO_{p_1}}(\pi)\,\wh\otimes\,\sigma_0,\mathbbm{1})\neq 0
\]
for some $F$-rational nilpotent orbit $\CO_{p_1+2p_0}$.
A complete proof of Proposition \ref{prop:PI} will be given in Section \ref{sec-PPI}.


\section{Arithmetic Data}\label{sec-AD}


\subsection{Enhanced $L$-parameters and local Vogan packets}\label{ssec-LVP}

We recall from \cite[Section 3.1]{JLZ22} the definition of enhanced $L$-parameters and their associated local Vogan packets. 

Let $\CW_E$ be the local Weil group of $E$.
Then $\CW_\BC=\BC^\times$, $\CW_\BR=\BC^\times\cup j\BC^\times $,
where $j^2=-1$ and $j z j^{-1}=\bar{z}$ for $z\in \BC^\times$; and
when $E$ is $p$-adic, $\CW_E=\CI_E\rtimes \apair{\rm {Frob}}$ is the semi-direct product of the inertia group $\CI_E$ of $E$ and a geometric Frobenius element  ${\rm {Frob}}$.
The local Langlands group (also called the Weil-Deligne group) of $E$ is defined by
\begin{align}\label{LLG}
\CL_E=\begin{cases}
	\CW_E &\text{if $E$ is archimedean},\\
	\CW_E\times\SL_2(\BC) &\text{if $E$ is $p$-adic}.	
\end{cases}	
\end{align}
We take the Galois version of the $L$-group ${^LG}:=\wh G \rtimes \Gal(E/F)$ of $G$, where $\wh G$ is the complex dual group of $G$.
Note that if $G=\Mp_{2n}$, then $\wh G=\Sp_{2n}(\BC)$, and if $G$ is $F$-split, the action of $\Gal(E/F)$ on $\wh G$ is trivial.

When $G_n=G_n^*$ is $F$-quasisplit,
an $L$-parameter $\varphi\colon\CL_F\to{^LG_n}$ of $G^*_n$ is attached to a datum $(L^*,\varphi^{L^*},\udl{\beta})$ with the following properties:
\begin{enumerate}
\item $L^*$ is a Levi subgroup of $G_n$ of the form
$$
L^*=\GL_{n_1}(E)\times\cdots\times\GL_{n_t}(E)\times G_{n_0};
$$
\item $\varphi^{L^*}$ is a local $L$-parameter of $L^*$ given by
$$
\varphi^{L^*}:=\varphi_1\oplus\cdots\oplus\varphi_t\oplus\varphi_0\ :\ \CL_{F}\rightarrow {^L\!L^*},
$$
where $\varphi_j\colon\CL_E\to\GL_{n_j}(\BC)$ is a local tempered $L$-parameter of $\GL_{n_j}(E)$ for $j=1,2,\cdots,t$, and $\varphi_0$ is a local tempered $L$-parameter
of $G_{n_0}^*$;
\item $\udl{\beta}:=(\beta_1,\cdots,\beta_t)\in \BR^t$, such that $\beta_1>\beta_2>\cdots>\beta_t>0$; and
\item the $L$-parameter $\varphi$ can be expressed as
$$
\varphi=(\varphi_1\otimes|\cdot|^{\beta_1}\oplus{}^c  \varphi_1^\vee \otimes|\cdot|^{-\beta_1})\oplus\cdots\oplus
(\varphi_t\otimes|\cdot|^{\beta_t}\oplus {}^c  \varphi_t^\vee \otimes|\cdot|^{-\beta_t})\oplus\varphi_0,
$$
where $|\cdot|$ is the absolute value of $E$ and ${}^c\varphi_j^\vee$ denotes the conjugate  dual of $\varphi_j$ for $j=1,2,\cdots,t$.
\end{enumerate}
Following \cite{Ar13}, \cite{GS12}, \cite{MW12}, \cite{KMSW14}, \cite{Mok15}, and \cite{AG17a}, with a choice of a Whittaker datum, one can form a local $L$-packet ${\Pi}_\varphi(G_n)$
for each $L$-parameter $\varphi$, which is a finite subset of $\Pi_F(G_n)$ satisfies certain
constraints in the local Langlands conjecture for $G_n$ over $F$. A local $L$-parameter $\varphi$ is called {\sl generic} if the local $L$-packet $\Pi_\varphi(G_n)$ contains a generic member, i.e. a member with 
a non-zero Whittaker model with respect to the given Whittaker datum. 

One may define ${\Pi}_\varphi[G_n^*]$ to be the local Vogan packet associated to each $L$-parameter $\varphi$, as in \cite[Sections 9--11]{GGP12}, for the classical groups under consideration:
\begin{equation}\label{lvp}
{\Pi}_\varphi[G_n^*]=\bigcup_{G_n}{\Pi}_\varphi(G_n)
\end{equation}
where $G_n$ runs over all pure inner $F$-forms of the given $F$-quasisplit $G_n^*$ and the local $L$-packet ${\Pi}_\varphi(G_n)$ is the subset of representations of $G_n$.
The $L$-packet ${\Pi}_\varphi(G_n)$ is defined to be
{\it empty} if the parameter $\varphi$ is not $G_n$-relevant. In the case that $G^*_n=\SO_{2n}$, we use the weak local Langlands correspondence by  \cite{AG17a} in the above and replace $\Pi_\varphi(G_n)$ by the equivalence classes $\Pi_\varphi(G_n)/\sim_{\rm c}$ under the outer action of $\RO_{2n}$, i.e. conjugation by an element ${\rm c}\in \RO_{2n}\setminus \SO_{2n}$ with $\det(\rm{c})=-1$.
The set of all generic local $L$-parameters of $G_n^*$, up to equivalence, is denoted by ${\Phi}_\gen(G_n^*)$.
For an $L$-parameter $\varphi$,
denote $S_\varphi$ to be the centralizer of the image of $\varphi$ in $\wh G$, and $S^\circ_\varphi$ to be its identity connected component group.
Then the component group $\CS_\varphi:=S_\varphi/S_\varphi^\circ$ is an abelian $2$-group for the classical groups under consideration. For $\varphi\in{\Phi}_\gen(G_n^*)$, the group $\CS_\varphi$ has been computed explicitly in \cite[Section 8]{GGP12} and also in \cite[Section 3.2]{JLZ22}, which is recalled briefly as follows. 
Write
\begin{equation}\label{decomp}
\varphi=\oplus_{i\in \RI}m_i\varphi_i,
\end{equation}
which is the decomposition of $\varphi$ into simple and generic ones.
The simple, generic local $L$-parameter $\varphi_i$ can be written as $\rho_i\otimes\mu_{b_i}$,  where $\rho_i$ is an $a_i$-dimensional
irreducible representation of $\CW_F$ and $\mu_{b_i}$ is the irreducible representation of $\SL_2(\BC)$ of dimension $b_i$.

In the decomposition \eqref{decomp},
$\varphi_i$ is called {\it of good parity} if $\varphi_i$ is of the same type of $\varphi$. 
We denote by $\RI_{\gp}$ the subset of
$\RI$ consisting of indices $i$ such that $\varphi_i$ is of good parity; and by $\RI_{\bp}$ the subset of
$\RI$ consisting of indices $i$ such that $\varphi_i$ is (conjugate) self-dual, but not of good parity. We set $\RI_{\nsd}:=\RI-(\RI_{\gp}\cup\RI_{\bp})$ for
the indexes of $\varphi_i$'s which are not (conjugate) self-dual.
Hence we may write $\varphi\in{\Phi}_\gen(G_n^*)$ in the following more explicit way:
\begin{equation}\label{edec}
\varphi=(\oplus_{i\in \RI_\gp}m_i\varphi_i)\oplus(\oplus_{j\in \RI_\bp}2m'_j\varphi_j)\oplus(\oplus_{k\in \RI_\nsd}m_k(\varphi_k\oplus{}^c \varphi_k^\vee)),
\end{equation}
where $2m'_j=m_j$ in \eqref{decomp} for $j\in \RI_{\bp}$.
According to this explicit decomposition, it is easy to know that
\begin{equation}\label{2grp}
\CS_\varphi\cong\BZ_2^{\#\RI_\gp} \text{ or } \BZ_2^{\#\RI_\gp-1}.
\end{equation}
The latter case occurs if $G_n$ is even orthogonal and some orthogonal summand $\varphi_i$ for $i\in \RI_\gp$ has odd dimension, or if $G_n$ is symplectic.
As in \cite[(3.5)]{JLZ22}, for any $\varphi\in{\Phi}_\gen(G_n^*)$ we write elements of $\CS_\phi$ in the following form
\begin{equation}\label{eq:e-i}
(e_i)_{i\in \RI_{\gp}}\in \BZ_{2}^{\#\RI_{\gp}}, \text{ (or simply denoted by $(e_{i})$)}, \ {\rm with}\ e_i\in\{0,1\}, 
\end{equation}
where each $e_i$ corresponds to $\varphi_i$-component in the decomposition \eqref{edec} for $i\in \RI_\gp$.
In the case that $\wh{G}=\SO_N(\BC)$, denote by $A_\varphi$ the component group  ${\rm Cent}_{\RO_N(\BC)}(\varphi)/{\rm Cent}_{\RO_N(\BC)}(\varphi)^\circ$.
Then $\CS_\varphi$ consists of elements in $A_\varphi$ with determinant $1$ and is a subgroup of index $1$ or $2$.
Also write elements of $A_\varphi$ in the form
$(e_i)$ with $e_i\in\{0,1\}$ corresponding to the $\varphi_i$-component in the decomposition \eqref{edec} for $i\in \RI_\gp$.
When $G_n$ is even orthogonal and some $\varphi_i$ for $i\in \RI_\gp$ has odd dimension or $G_n$ is symplectic, $(e_i)_{i\in \RI_{\gp}}$ is in $\CS_\varphi$ if and only if $\sum_{i\in\RI_\gp}e_i\dim\varphi_i$ is even.

The Vogan version of the local Langlands correspondence for the classical groups under consideration asserts that for any $\varphi\in{\Phi}_\gen(G_n^*)$ the local Vogan packet 
${\Pi}_\varphi[G_n^*]$ is in one-to-one correspondence with the character group $\wh{\CS_\varphi}$ (the Pontryagin dual) of the component group $\CS_\varphi$. The parameterization depends on a 
choice of the Whittaker datum on $G_n^*(F)$, which as explained in \cite[Section 3.2]{JLZ22}, is completely determined by a number $a$ belonging to a group $\CZ$. As in \cite[Section 3.2.1]{JLZ22}, 
this group $\CZ$ is defined to be
\begin{equation} \label{CZ}
\CZ=F^\times/ \BN E^\times \cong
\begin{cases} 
F^\times/ F^{\times2}, & \textrm{if }E=F,\\
\textrm{Gal}(E/F), & \textrm{if }E=F(\delta).
\end{cases}
\end{equation}
We refer to \cite[Section 3.2.1]{JLZ22} for a detailed discussion of the $F$-rationality of the local Langlands correspondence and the relation between the 
Whittaker data on $G_n^*(F)$ and the $\CZ$-action on the character group $\wh{\CS_\varphi}$ of the component group $\CS_\varphi$. With a fixed $a\in\CZ$, which determines the local Whittaker datum and 
the $F$-rationality of the local Langlands correspondence, for any $\pi\in {\Pi}_\varphi[G_n^*]$, there exists a unique character $\mu\in\wh{\CS_\varphi}$, such that the enhanced $L$-parameter $(\varphi,\mu)$ corresponds to $\pi$. In this case, we write 
\begin{equation}\label{pia}
    \pi=\pi_a(\varphi,\mu),\quad \iota_a(\pi)=(\varphi,\mu).
\end{equation}

\subsection{Local Gan-Gross-Prasad conjecture}\label{ssec-LGGP}

The local Gan-Gross-Prasad conjecture for classical groups and generic $L$-parameters (\cite{GGP12}) is one of the main inputs in the theory of {\sl arithmetic wavefront sets} as 
developed in \cite{JLZ22} and in what we are discussing in this paper. 

It is important to point out that the local Gan-Gross-Prasad conjecture (\cite{GGP12})
is now a well-established theorem for classical groups and generic local $L$-parameters. More precisely, it is proved for all $p$-adic cases by J.-L.\ Waldspurger (\cite{W10, W12a, W12b}),
C. M\oe glin and J.-L.\ Waldspurger (\cite{MW12}), R. Beuzart-Plessis (\cite{BP16}),  W. T. Gan and A. Ichino (\cite{GI16}), and H. Atobe (\cite{At18}). It is proved for real unitary groups by R. Beuzart-Plessis \cite{BP20}, (H. He (\cite{He17}) for some special cases) and H. Xue (\cite{X1, X2}). Finally, it is proved for real special orthogonal groups  by  Z. Luo (\cite{Lu20}), C. Chen (\cite{Ch21}), and Chen-Luo (\cite{CL22}). Among many other results, the rank one real orthogonal groups are also treated in the work of T. Kobayashi and B. Speh \cite{KS15, KS18}.

For convenience, we recall from \cite[Section 4.4]{JLZ22} the explicit arithmetic data that we need for this paper from the local Gan-Gross-Prasad conjecture (\cite{GGP12}). They are given case by case, 
in order to take into account the compatibility of the local conjectures (\cite[\S18]{GGP12}) in our formulation through the rationality of nilpotent orbits. 

\subsubsection{Special orthogonal groups}\label{sec:GGP-SO}
Let $(\varphi, \phi) \in \Phi_\gen(\SO(V^*)\times \SO(W^*))$, where $V^*\supset W^*$ is a relevant pair of orthogonal spaces of dimensions $\Fn>\Fn'$ with opposite parities. Then the local Gan-Gross-Prasad conjecture asserts that up to isometry there is a unique relevant pure inner form $\SO(V)\times \SO(W)$ and  a unique pair of representations $(\pi, \pi')\in \Pi_\varphi(\SO(V))\times \Pi_\phi(\SO(W))$ up to $\sim_{\rm c}$ equivalence,  such that
\[
\Hom_{P_{X_{e,\varsigma}}}(\pi\wh{\otimes} \pi', \psi_{X_{e,\varsigma}})\neq 0,
\]
with  $p_1=\Fn-\Fn'$, $m=\lfloor p_1/2\rfloor$, $W^\perp=\BH^m+F e$, $X_{e, \varsigma}\in \CO^{\st}_{p_1}$
and $P_{X_{e,\varsigma}} = M_{X_{e,\varsigma}}^\circ\ltimes U_{X_{e, \varsigma}}$. Here and below, $\BH$ denotes a hyperbolic plane (over $E$).
Moreover, the pair $(\pi, \pi')$ is determined by
\[
\pi=\pi_a(\varphi, \chi_{\varphi,\phi}),\quad \pi'=\pi_a(\phi,\chi_{\phi,\varphi}),
\]
where $(\chi_{\varphi, \phi}, \chi_{\phi, \varphi})$ is given by \eqref{char-O}, and the weak local Langlands correspondence for the even orthogonal group is via $\iota_a$ as in \eqref{pia} with 
\[
a=-\disc(V)\disc(W)=(-1)^\Fn\langle e, e\rangle.
\]

\subsubsection{Symplectic-metaplectic groups}
Let $(\varphi, \phi) \in \Phi_\gen(\Sp(V)\times \Mp(W))$ or $\Phi_\gen(\Mp(V)\times \Sp(W))$, where $V\supset W$ is a pair of symplectic spaces of dimensions  $2n\geq 2n'$. Then the local Gan-Gross-Prasad conjecture asserts that for $\varsigma\in F^\times$,  there is  a unique pair of representations $(\pi, \pi')\in \Pi_\varphi(\Sp(V))\times \Pi_\phi(\Mp(W))$ or $\Pi_\varphi(\Mp(V))\times \Pi_\phi(\Sp(W))$ such that
\[
\Hom_{\wt{P}_{X_{e,\varsigma}}}(\pi\wh{\otimes} \pi', \omega_{\psi_{X_{e, \varsigma}}})\neq 0,
\]
with $p_1=2n-2n'$, $m=p_1/2$, $W^\perp =\BH^m$, $X_{e, \varsigma}\in \CO^{\st}_{p_1}$ 
and $\wt{P}_{X_{e,\varsigma}} = \wt{M}_{X_{e,\varsigma}}\ltimes U_{X_{e, \varsigma}}$.
Moreover, the pair $(\pi, \pi')$ is determined by
\[
\pi=\pi_1(\varphi, \chi^a_{\varphi,\phi}),\quad \pi'=\pi_1(\phi,\chi^a_{\phi,\varphi}),
\]
where  $(\chi^a_{\varphi, \phi}, \chi^a_{\phi, \varphi})$ is given by \eqref{char-spmp}, $a=(-1)^{m-1}\varsigma$, and the local Langlands correspondence is via $\iota_1$. The above is a reformulation of \cite[Prop. 18.1]{GGP12}.

\subsubsection{Unitary groups: Bessel case}
Let $(\varphi, \phi) \in \Phi_\gen(\RU(V^*)\times \RU(W^*))$, where $V^*\supset W^*$ is a relevant pair of Hermitian spaces of dimensions $\Fn>\Fn'$ with opposite parities. Then the local Gan-Gross-Prasad conjecture asserts that  for a trace zero element $\varsigma\in E^\times$, up to isometry there is a unique relevant pure inner form $\RU(V)\times \RU(W)$ and  a unique pair of representations $(\pi, \pi')\in \Pi_\varphi(\RU(V))\times \Pi_\phi(\RU(W))$,  such that
\[
\Hom_{P_{X_{e,\varsigma}}}(\pi\wh{\otimes} \pi', \psi_{X_{e,\varsigma}})\neq 0,
\]
with  $p_1=\Fn-\Fn'$, $m=\lfloor p_1/2\rfloor$, $W^\perp=\BH^m+E e$,  $X_{e, \varsigma}\in \CO^{\st}_{p_1}$
and $P_{X_{e,\varsigma}} = M_{X_{e,\varsigma}}\ltimes U_{X_{e, \varsigma}}$.
Moreover, the pair $(\pi, \pi')$ is determined by
\[
\pi=\pi_a(\varphi, \chi_{\varphi,\phi}),\quad \pi'=\pi_{-a}(\phi,\chi_{\phi,\varphi}),
\]
where $(\chi_{\varphi, \phi}, \chi_{\phi, \varphi})$ is given by \eqref{char-U}, and
\[
a=(-1)^{\Fn-1}\disc(V)\disc(W) = \langle e,e \rangle.
\]

\subsubsection{Unitary groups: Fourier-Jacobi case} Fix a splitting character $\xi$ that is used to define the Heisenberg-oscillator representation, namely $\xi$ is a  character of $E^\times$ such that $\xi|_{F^\times}=\omega_{E/F}$  is the quadratic character of $\CZ=F^\times/\BN E^\times$ given by the local class field theory.  Let $(\varphi, \phi) \in \Phi_\gen(\RU(V^*)\times \RU(W^*))$, where $V^*\supset W^*$ is a relevant pair of skew-Hermitian spaces of dimensions $\Fn \geq \Fn'$ with the same parity. Then the local Gan-Gross-Prasad conjecture asserts that for $\varsigma\in F^\times$,
up to isometry there is a unique relevant pure inner form $\RU(V)\times \RU(W)$ and  a unique pair of representations $(\pi, \pi')\in \Pi_\varphi(\RU(V))\times \Pi_\phi(\RU(W))$,  such that
\[
\Hom_{P_{X_{e,\varsigma}}}(\pi\wh{\otimes} \pi', \omega_{\psi_{X_{e,\varsigma}},\xi})\neq 0,
\]
with  $p_1=\Fn-\Fn'$, $m=p_1/2$, $W^\perp =\BH^m$, $X_{e, \varsigma}\in \CO^{\st}_{p_1}$ and $P_{X_{e,\varsigma}} = M_{X_{e,\varsigma}}\ltimes U_{X_{e, \varsigma}}$.
Moreover, the pair $(\pi, \pi')$ is determined by
\[
\pi=\pi_1(\varphi, \chi_{\varphi(\xi^{-1}),\phi}\cdot \eta_a),\quad \pi'=\pi_1(\phi,\chi_{\phi,\varphi(\xi^{-1})}\cdot\eta_a),
\]
where $(\chi_{\varphi(\xi^{-1}), \phi}, \chi_{\phi, \varphi(\xi^{-1})})$ is given by \eqref{char-U},
$a= (-1)^{m-1}\varsigma$,
and the local Langlands correspondence  is via $\iota_1$.

\subsection{Twisted distinguished characters} \label{ssec-DC}

Assume that $\varphi$ and $\phi$ are two (conjugate) self-dual representations
of the local Langlands group $\CL_E$ (see \eqref{LLG} for definition) of opposite types, written in the form \eqref{edec} with index sets $\RI_\gp$ and $\RI_\gp'$, respectively. 
The pair of twisted characters $(\chi_{\varphi,\phi},\chi_{\phi,\varphi})$ is the key arithmetic ingredient in the local Gan-Gross-Prasad conjecture as discussed in Section \ref{ssec-LGGP}.
We recall from \cite[Section 3.4]{JLZ22} the explicit expressions of $(\chi_{\varphi,\phi},\chi_{\phi,\varphi})$ in each case. 
We write the elements of the corresponding component groups $\CS_\varphi$ and $\CS_\phi$ as $(e_i)_{i\in \RI_\gp}$ and $(e_{i'})_{i'\in \RI_\gp'}$ respectively. 
Let $\psi_0$ be a fixed nontrivial additive character of $F$ and $\psi$ be the character of $E$ 
given by 
\begin{equation}\label{psiE}
\psi(x):=\psi_0(\frac{1}{2}\tr_{E/F}(\delta\cdot x))
\end{equation}
with $E=F(\delta)$ as in Section \ref{ssec-CGIF}. For $a\in \mathcal{Z}$, define a character $\eta_a$ of $\CS_\varphi$ by\footnote{For $\Mp_{2n}$ the character $\eta_a$ is denoted by $\eta[a]$ in \cite{GGP12}; we change the notation for uniformity.}
\begin{equation}\label{eta}
\eta_a((e_i)_{i\in \RI_{\gp}})=
\begin{cases} \prod_{i\in \RI_{\gp}}(\det \varphi_i, a)_F^{e_i}, & \textrm{if }E=F\textrm{ and }G_n^*\neq \Mp_{2n},\\
\prod_{i\in \RI_\gp} \left(\varepsilon(\varphi_i)\varepsilon(\varphi_i(a))(a,-1)_F^{\dim \phi_i/2}\right)^{e_i}, & \textrm{if }E=F\textrm{ and }G_n^*=\Mp_{2n},\\
\prod_{i\in \RI_{\gp}}\omega_{E/F}(a)^{e_i\dim \varphi_i}, & \textrm{if }E=F(\delta),
\end{cases}
\end{equation}
where $(\cdot, \cdot)_F$ is the Hilbert symbol defined over $F$, and $\omega_{E/F}$ is the quadratic character of $\CZ$ given by the local classfield theory. Note that $\eta_a$ is trivial if $G_n$ is special odd orthogonal.
Then we have a $\CZ$-action on $\widehat{\CS}_\varphi$ by
\begin{align}\label{Z-action}
a\ \colon\ \widehat{\CS_\varphi} \to \widehat{\CS_\varphi},\quad \mu\mapsto \mu\cdot\eta_a
\end{align}
for every $a\in\CZ$. It clear that one may define a character $\eta_a$ for $\phi$ and define 
an action of $\CZ$ on $\wh{\CS_\phi}$.

\subsubsection{}
If  $E=F$, $\varphi$ and $\phi$ are self-dual of even dimensions, define a pair of characters $(\chi_{\varphi,\phi}, \chi_{\phi,\varphi})\in \wh{\CS}_\varphi\times\wh{\CS}_\phi$ by
\begin{equation} \label{char-O}
 \chi_{\varphi,\phi}((e_i)_{i\in \RI_\gp}):= \prod_{i\in \RI_\gp}\left(\varepsilon(\varphi_i\otimes\phi) (\det \varphi_i)(-1)^{\dim \phi/2}(\det \phi)(-1)^{\dim\varphi_i/2}\right)^{e_i},
\end{equation}
and a similar formula defines $\chi_{\phi,\varphi}$. Note that the local root number
\[
\varepsilon(\varphi_i\otimes\phi):=\varepsilon(\varphi_i\otimes\phi, \psi_0)
\]
 does not depend on $\psi_0$. In this case, for $z\in \CZ$ define
 \begin{equation}
 (\chi_{\varphi, \phi}^z, \chi_{\phi,\varphi}^z) = (\chi_{\varphi, \phi}\cdot \eta_z, \chi_{\phi,\varphi}\cdot \eta_z),
 \end{equation}
 where $\eta_z$ is given by \eqref{eta}.

\subsubsection{}
If $E=F$, $\varphi$ is odd orthogonal and $\phi$ is symplectic,  define $(\chi_{\varphi,\phi}, \chi_{\phi,\varphi})$ to be the restriction of
\[
(\chi_{\varphi_1,\phi},\chi_{\phi,\varphi_1})\in
\wh{\CS}_{\varphi_1}\times\wh{\CS}_\phi
\]
to $\CS_{\varphi}\times \CS_\phi$, where
\[
\varphi_1:=\varphi\oplus\BC.
\]
We also consider the twist $\varphi(z)$ with $z\in \CZ$, and make the canonical identification $\CS_{\varphi(z)}\cong \CS_\varphi$. Then we define
\begin{equation} \label{char-spmp}
(\chi^z_{\varphi, \phi}, \chi^z_{\phi,\varphi})=(\chi_{\varphi(z), \phi} \cdot \eta_z, \chi_{\phi, \varphi(z)})\in \widehat{\CS}_\varphi\times\widehat{\CS}_\phi.
\end{equation}

\subsubsection{}
If $E=F(\delta)$, $\varphi$ and $\phi$ are conjugate self-dual, define $(\chi_{\varphi,\phi}, \chi_{\phi,\varphi})\in \wh{\CS}_\varphi\times\wh{\CS}_\phi$ by
\begin{equation} \label{char-U}
 \chi_{\varphi,\phi}((e_i)_{i\in \RI_\gp})= \prod_{i\in \RI_\gp}\varepsilon(\varphi_i\otimes\phi, \psi) ^{e_i},
\end{equation}
and a similar formula defines $\chi_{\phi,\varphi}$, where $\psi$ is given by \eqref{psiE}. Put $\Fl= \dim \varphi - \dim\phi$. In this case, for $z\in \CZ$ define
\begin{equation}\label{chi-a}
(\chi^z_{\varphi,\phi}, \chi^z_{\phi,\varphi})= (\chi_{\varphi, \phi}\cdot \eta_z, \chi_{\phi,\varphi}\cdot \eta_{(-1)^{\Fl}z}).
\end{equation}

By convention, in the above cases we allow $\varphi$ or $\phi$ to be zero. In such a case we formally take $\chi_{\varphi, \phi}$ and $\chi_{\phi,\varphi}$ to be the trivial characters of $\CS_\varphi$ and $\CS_\phi$ respectively.

\subsection{Descent of enhanced $L$-parameters}\label{ssec-DELP}

In order to unify the notations, we denote  
\[
\widetilde{\Phi}_\gen(G_n^*)=\Phi_\gen(G_n^*)
\]
if $E=F$, and 
\[
\widetilde{\Phi}_\gen(G_n^*)= \Phi_\gen(G_n^*)\cup\Phi_\gen(G_n^*)\otimes\xi
\]
the set of all $\frak{n}$-dimensional conjugate self-dual $L$-parameters if $E=F(\delta)$, where $\xi$ is any  character of $E^\times$ such that $\xi|_{F^\times}=\omega_{E/F}$. Note that if $E=F(\delta)$ then $\Phi_\gen(G_n^*)$ consists of conjugate self-dual $L$-parameters  of sign $(-1)^{\Fn-1}$. This modification is made to avoid the choice of a splitting character $\xi$ in the Fourier-Jacobi model of  unitary groups.

Let $\varphi\in\widetilde{\Phi}_\gen(G_n^*)$ be an $L$-parameter given by the form \eqref{edec}. For $\mu\in \widehat{\CS}_\varphi$, define its $\CZ$-orbit
\[
\ScO_\CZ(\mu)=\{\mu\cdot\eta_z \in \widehat{\CS}_\varphi \mid z\in\CZ\}.
\]
We recall from \cite[Section 5.1]{JLZ22} the definition and properties of the descents of enhanced $L$-parameters 
$(\varphi,\mu)$ with $\varphi\in \widetilde{\Phi}_\gen(G_n^*)$ and $\chi\in
\widehat{\CS}_\varphi$, along with an $\Fn$-dimensional $\epsilon$-Hermitian vector space $(V, q)$ over $F$.
As in Section \ref{ssec-CGIF}, let $(V, q)$ be an $\Fn$-dimensional $\epsilon$-Hermitian vector space over $F$. Let $0<\Fl\leq \Fn$ be such that
$
(-1)^{\Fl-1}=\epsilon
$
if $E=F$. There is no restriction on the parity of $\Fl$ if $E=F(\delta)$. 
We obtain quasi-split relevant pairs of classical groups 
$(G_n^*, H_{\lfloor(\frak{n}-\Fl)/2\rfloor}^*)$ as given in \eqref{relevantpair}.

\begin{defn}[Definition 5.1 of \cite{JLZ22}]\label{defn:pd}
For any given enhanced $L$-parameter $(\varphi,\mu)$ with
$\varphi\in \widetilde{\Phi}_\gen(G_n^*)$ and $\mu\in\widehat{\CS_\varphi}$,
the $\Fl$-th  descent $\FD_{\Fl}^z(\varphi,\mu)$ of $(\varphi,\mu)$ along $z\in \CZ$ is the set of contragredients $(\wh{\phi}, \wh{\nu})$ of all enhanced $L$-parameters $(\phi,\nu)$ with
$\phi\in \widetilde{\Phi}_\gen(H_{\lfloor(\frak{n}-\Fl)/2\rfloor}^*)$ and $\nu\in\wh{\CS_\phi}$ satisfying the following conditions:
\begin{enumerate}
\item  the type of $\phi$ is different from that of $\varphi$, and
\item the following
\[
(\chi^z_{\varphi,\phi}, \chi^z_{\phi, \chi} ) = (\mu, \nu),
\]
holds, where $(\chi^z_{\varphi,\phi},\chi^z_{\phi,\varphi})$ is defined, case by case, as in Section {\rm \ref{ssec-DC}}.
\end{enumerate}
The $\Fl$-th  descent of the enhanced $L$-parameter $(\varphi,\mu)$  is defined to be
\begin{equation}\label{Fl-pd}
\FD_\Fl(\varphi, \mu) := \bigcupdot_{z\in \CZ} \FD_\Fl^z(\varphi, \mu).
\end{equation}
\end{defn}
The contragredient $(\wh\phi,\wh{\nu})$ of $(\phi,\nu)$ is defined in 
\cite[Section 3.3]{JLZ22} and is explicitly given in \cite[Proposition 3.2]{JLZ22} for the 
classical groups under consideration. 

\begin{prop}[Proposition 3.2 of \cite{JLZ22}]\label{prop:dualdata}
For an enhanced $L$-parameter $(\varphi,\mu)$  with
$\varphi\in \widetilde{\Phi}_\gen(G_n^*)$ and $\mu\in\widehat{\CS_\varphi}$, 
the contragredient $(\wh{\varphi}, \wh{\mu})$ can be written, case by case, as follows:
\begin{itemize}
\item $G^*=\SO_\Fn^*$, $(\wh\varphi, \wh\mu) = (\varphi, \mu)$.
\item $G^*=\Sp_{2n}$, $(\wh\varphi, \wh\mu) = (\varphi, \mu \cdot \eta_{-1})$.
\item $G^*=\Mp_{2n}$,  $(\wh\varphi, \wh\mu) = (\varphi(-1), \mu\cdot \eta_{-1})$.
\item $G^* =\RU_\Fn^*$, $(\wh\varphi, \wh\mu) = (\varphi^\vee, \mu\cdot\eta)$ where $\eta$ is trivial if $\Fn$ is odd and $\eta = \eta_{-1}$ if $\Fn$ is even.
\end{itemize}
In all cases as listed above, the following equality 
\[
(\wh\varphi, \wh{\mu\cdot \eta_a}) = (\wh\varphi, \wh\mu\cdot \eta_a)
\]
holds for any $a\in \CZ$.
\end{prop}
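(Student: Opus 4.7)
The plan is to treat the four cases one at a time, drawing on two general mechanisms: the compatibility of the local Langlands correspondence (LLC) with taking contragredients at the $L$-parameter level, and the sensitivity of the character label $\mu\in\wh{\CS_\varphi}$ to the choice of Whittaker datum $\Fw_a$, as encoded by the $\CZ$-action in \eqref{Z-action}. For the parameter side, I would invoke the known contragredient theorems for the LLC (Kaletha in the $p$-adic case and Adams--Vogan in the archimedean case, together with the explicit formulations from Arthur, Mok, and KMSW already used in Section~\ref{ssec-LVP}): if $\pi\leftrightarrow(\varphi,\mu)$ via $\iota_a$, then $\pi^\vee$ has $L$-parameter $\varphi^\vee$. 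Since orthogonal and symplectic parameters are self-dual and conjugate self-dual parameters of unitary type satisfy $\varphi^\vee\cong{}^c\varphi$, this already yields the stated $\wh\varphi$ in the $\SO_\Fn^*$, $\Sp_{2n}$, and $\RU_\Fn^*$ cases. In the metaplectic case, the LLC of Gan--Savin is normalized via a fixed additive character $\psi$, and the relation $\omega_\psi^\vee\cong\omega_{\psi^{-1}}$ between Weil representations forces the extra twist $\varphi\mapsto\varphi(-1)$ on the parameter side.

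For the character side, the key observation is that $\iota_a$ depends on $a\in\CZ$ purely through the Whittaker normalization, and taking contragredient sends $\Fw_a$ to the datum $\Fw_{-a}$ (the character $\psi_0(a\,\cdot)$ is replaced by its inverse, up to Hilbert-symbol adjustment). Comparing the two $\iota$-parameterizations and applying the shift rule \eqref{Z-action} gives the character twist $\mu\mapsto\mu\cdot\eta_{-1}$ in the symplectic, metaplectic, and even-unitary cases, while in the orthogonal and odd-unitary cases the formula \eqref{eta} for $\eta_{-1}$ evaluates to the trivial character of $\CS_\varphi$, producing no shift. The final identity $(\wh\varphi,\wh{\mu\cdot\eta_a})=(\wh\varphi,\wh\mu\cdot\eta_a)$ then reduces to a formal check: in each case $\wh{(\cdot)}$ acts on characters as multiplication by a fixed $\eta$-character (trivial, $\eta_{-1}$, or the appropriate parity-dependent one), and $\CS_\varphi$ is a 2-group, so the multiplications commute.

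The main obstacle I anticipate is the metaplectic case, where one must carefully track the $\psi$-dependence through the Gan--Savin LLC and verify that the simultaneous twist $\varphi\mapsto\varphi(-1)$ of the parameter and $\mu\mapsto\mu\cdot\eta_{-1}$ of the character combines into a genuine involution on enhanced parameters (so that $\iota_a(\pi^{\vee\vee})=\iota_a(\pi)$); this amounts to checking that $\varphi(-1)(-1)\cong\varphi$ and $\eta_{-1}\cdot(\eta_{-1}\text{ transported under the twist})$ is trivial, using the Hilbert-symbol formula in \eqref{eta}. A secondary subtlety arises in the even orthogonal case, where by the weak LLC of \cite{AG17a} packet members are only defined up to the outer $\sim_{\rm c}$-action of $\RO_{\Fn}\setminus\SO_{\Fn}$; one must verify that contragredient commutes with this action so that the identity $\wh\varphi=\varphi$, $\wh\mu=\mu$ is well-defined on equivalence classes. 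Both obstacles can be resolved by direct computation with the explicit formulas in Section~\ref{ssec-DC}.
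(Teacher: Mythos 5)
First, a point of reference: this paper does not prove Proposition \ref{prop:dualdata} at all — it is imported verbatim as \cite[Proposition 3.2]{JLZ22}, so there is no internal argument to measure your sketch against. That said, your overall architecture (the contragredient theorem for the LLC at the level of $L$-parameters, combined with tracking how the contragredient interacts with the Whittaker normalization via the characters $\eta_a$ of \eqref{eta}) is the standard route and is essentially how the cited result is obtained.

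There is, however, a concrete error in the step that is supposed to produce the case-by-case formulas. You claim that in every case the contragredient replaces the normalization $a$ by $-a$, hence twists $\mu$ by $\eta_{-1}$, and that ``in the orthogonal and odd-unitary cases the formula \eqref{eta} for $\eta_{-1}$ evaluates to the trivial character of $\CS_\varphi$.'' That vanishing is true only for $\SO_{2n+1}$, where every good-parity summand $\varphi_i$ is symplectic and so $\det\varphi_i=1$. For $\SO_{2n}$ it fails: take $\varphi=\chi_1\oplus\chi_2\oplus\phi$ with $\chi_1,\chi_2$ quadratic characters, $\chi_1(-1)=-1$, $\chi_2(-1)=1$, and $\phi$ a $2$-dimensional orthogonal summand; then $(1,1,0)\in\CS_\varphi$ and $\eta_{-1}((1,1,0))=\chi_1(-1)\chi_2(-1)=-1$. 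For $\RU_\Fn$ with $\Fn$ odd it also fails: $\eta_{-1}((e_i))=\prod_i\omega_{E/F}(-1)^{e_i\dim\varphi_i}$, some $\dim\varphi_i$ is necessarily odd, and $\omega_{E/F}(-1)=-1$ whenever $-1\notin\BN E^\times$ (e.g.\ $E=\BQ_2(\sqrt{-1})$). So your recipe, applied literally, outputs $\wh\mu=\mu\cdot\eta_{-1}\neq\mu$ in exactly the two cases where the proposition asserts $\wh\mu=\mu$. The correct mechanisms there are different: for $\RU_{2n+1}$ all Whittaker data are conjugate under $\RU_{2n+1}(F)$ itself, so the contragredient does not move the normalization at all (which is why the proposition declares $\eta$ trivial for odd $\Fn$ rather than computing $\eta_{-1}$ to be trivial); for $\SO_{2n}$ the MVW element realizing $\pi^\vee\cong\pi^g$ lies in $\RO_{2n}\setminus\SO_{2n}$, and the identity $(\wh\varphi,\wh\mu)=(\varphi,\mu)$ holds only modulo the outer $\sim_{\rm c}$-equivalence built into the weak LLC of \cite{AG17a}, not because of any vanishing of $\eta_{-1}$. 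Your treatment of $\Sp_{2n}$, and of $\Mp_{2n}$ modulo the $\varepsilon$-factor identity you correctly flag, is sound; to repair the argument you must replace the claimed triviality of $\eta_{-1}$ by these two separate case-specific mechanisms.
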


We prove the tower property of the descents of enhanced $L$-parameters, which can be stated as follows.

\begin{prop}\label{prop:TP}
For any enhanced $L$-parameter $(\varphi,\mu)$ with $\varphi\in\widetilde{\Phi}_\gen(G_n^*)$ and $\mu\in\wh{\CS_\varphi}$, if there exists an integer $\Fl_1$ with $0<\Fl_1\leq\Fn$ such that the $\Fl_1$-th descent $\FD_{\Fl_1}(\varphi, \mu)$ is non-empty, then for any integer $\Fl$ 
with $0<\Fl\leq\Fl_1$ and $\Fl\equiv \Fl_1 \text{ mod }2$, the $\Fl$-th descent $\FD_\Fl(\varphi, \mu)$ is non-empty.
\end{prop}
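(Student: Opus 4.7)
The plan is to prove Proposition \ref{prop:TP} by an explicit construction: given an element of $\FD_{\Fl_1}(\varphi, \mu)$, inflate its underlying $L$-parameter by a non-(conjugate)-self-dual summand of dimension $(\Fl_1 - \Fl)/2$, which is a non-negative integer by the parity hypothesis $\Fl \equiv \Fl_1 \pmod{2}$. Fix $(\wh{\phi}, \wh{\nu}) \in \FD_{\Fl_1}^z(\varphi, \mu)$ for some $z \in \CZ$; we may assume $\Fl < \Fl_1$ and set $m := (\Fl_1 - \Fl)/2 \geq 1$. We shall produce $(\wh{\phi'}, \wh{\nu'}) \in \FD_{\Fl}^{z'}(\varphi, \mu)$ for a suitable $z' \in \CZ$.

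Choose a representation $\rho$ of $\CL_E$ of dimension $m$ that is not (conjugate) self-dual (e.g., a direct sum of non-(conjugate)-self-dual characters of $\CW_E$, whose existence is standard in both archimedean and non-archimedean settings). Set
\[
\phi' := \phi \oplus \rho \oplus {}^c\rho^\vee.
\]
Then $\phi'$ is an element of $\widetilde{\Phi}_\gen(H^*_{\lfloor(\frak{n}-\Fl)/2\rfloor})$ of the same type as $\phi$, hence of type opposite to $\varphi$, so condition (1) of Definition \ref{defn:pd} holds. Since the summand $\rho \oplus {}^c\rho^\vee$ contributes only to the non-(conjugate)-self-dual part of the decomposition \eqref{edec}, we have $\RI'_\gp = \RI_\gp$, giving a canonical identification $\CS_{\phi'} \cong \CS_\phi$, under which we set $\nu' := \nu$. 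It remains to select $z'$ so that condition (2) of Definition \ref{defn:pd} holds, namely
\[
(\chi^{z'}_{\varphi, \phi'}, \chi^{z'}_{\phi', \varphi}) = (\mu, \nu) = (\chi^z_{\varphi, \phi}, \chi^z_{\phi, \varphi}).
\]

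The heart of the argument is the computation of the ratio $\chi_{\varphi, \phi'}/\chi_{\varphi, \phi}$ as a character of $\CS_\varphi$ and the corresponding ratio $\chi_{\phi', \varphi}/\chi_{\phi, \varphi}$ on the identified component group $\CS_{\phi'} = \CS_\phi$, via the explicit formulas in Section \ref{ssec-DC}. Using the local functional equation relating $\varepsilon(\varphi_i \otimes \rho)$ and $\varepsilon(\varphi_i \otimes {}^c\rho^\vee)$ combined with the determinant shifts accompanying $\phi \to \phi'$ (note that $\det(\rho \oplus {}^c\rho^\vee) = 1$), one finds that $\chi_{\varphi, \phi'}/\chi_{\varphi, \phi}$ coincides with $\eta_{z''}$ for some $z'' \in \CZ$ determined by $\rho$, while $\chi_{\phi', \varphi}/\chi_{\phi, \varphi}$ is trivial on $\CS_\phi$ (as $\varphi$ contributes no new good-parity summand after augmentation). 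Setting $z' := z\cdot z''$, or the analogous twist prescribed by \eqref{chi-a} in the unitary Fourier-Jacobi case, produces the desired compatibility and yields $(\wh{\phi'}, \wh{\nu'}) \in \FD_{\Fl}^{z'}(\varphi, \mu)$, establishing the tower property.

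The main obstacle is the case-by-case verification that $\chi_{\varphi, \phi'}/\chi_{\varphi, \phi}$ does take the form $\eta_{z''}$, which must be performed separately for each of the four setups (orthogonal, symplectic-metaplectic, unitary Bessel, unitary Fourier-Jacobi). Particular care is needed in the symplectic-metaplectic case, where one first passes to the auxiliary parameter $\varphi \oplus \BC$ from \eqref{char-spmp}; in the even-orthogonal case, where one must check that the determinant-one condition cutting $\CS_\varphi$ out of $A_\varphi$ is preserved under $\phi \to \phi'$; and in the Fourier-Jacobi setting, where the $(-1)^\Fl$ twist in \eqref{chi-a} interacts with the dimension shift $\Fl_1 - \Fl$. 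One may need to refine the choice of $\rho$ (e.g., prescribing $\det \rho$) to minimize the correction in each setup, but the contragredient operation itself poses no additional difficulty by Proposition \ref{prop:dualdata}.
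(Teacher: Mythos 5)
Your construction is exactly the paper's: augment a first-descent parameter $\phi$ by $\phi_0\oplus{}^c\phi_0^\vee$ with $\phi_0$ tempered of dimension $(\Fl_1-\Fl)/2$ containing no (conjugate) self-dual summands of the relevant type, note $\CS_{\phi'}=\CS_\phi$, and keep the same enhancement. The only difference is that the paper avoids your $\eta_{z''}$ detour entirely: choosing $\phi_0$ to be a sum of unitary, non-quadratic (resp.\ non-conjugate-self-dual) characters, the relation $\varepsilon(\varphi_i\otimes\phi_0)\varepsilon(\varphi_i\otimes{}^c\phi_0^\vee)=\det(\varphi_i\otimes\phi_0)(-1)$ together with $\det({\phi_0\oplus{}^c\phi_0^\vee})=1$ and $\det\phi_0(-1)=1$ makes $\chi^z_{\varphi,\phi'}=\chi^z_{\varphi,\phi}$ and $\chi^z_{\phi',\varphi}=\chi^z_{\phi,\varphi}$ on the nose, so the same $z$ works and $(\wh{\phi'},\wh{\nu})\in\FD^z_{\Fl}(\varphi,\mu)$ directly.
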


\begin{proof}
Let $\phi_0$ be a tempered $L$-parameter of dimension $(\Fl_1-\Fl)/2$ which contains no irreducible (conjugate) self-dual summands of the same type as $\varphi$.
For instance, one can choose $\phi_0$ to be a direct sum of unitary characters of the Weil-Deligne group which are not quadratic and not conjugate self-dual. Assume that $(\wh\phi,\wh\nu)\in \FD_{\Fl_1}^z(\varphi,\mu)$.  
Let $\phi'=\phi\oplus\phi_0\oplus\phi_0^\vee$. Then $\CS_{\phi'}=\CS_\phi$ and it is clear that 
$(\wh{\phi'},\wh\nu)\in \FD_{\Fl}^z(\varphi,\mu)$. Thus $\FD_\Fl(\varphi, \mu)\neq\varnothing$.
\end{proof}

By \cite[Proposition 5.2]{JLZ22}, for any enhanced $L$-parameter $(\varphi,\mu)$ with $\varphi\in\widetilde{\Phi}_\gen(G_n^*)$ and $\mu\in\wh{\CS_\varphi}$, there exists an integer $\Fl$ with $0<\Fl\leq\Fn$ such that the $\Fl$-th descent $\FD_\Fl(\varphi, \mu)$ is non-empty. This leads to a definition of the {\sl first occurrence index} of the enhanced $L$-parameter 
$(\varphi, \mu)$, as in \cite[(5.3)]{JLZ22}, 
\begin{equation} \label{FO}
\Fl_0=\Fl_0(\varphi, \mu):=\max\{0<\Fl\leq \Fn \mid \FD_\Fl(\varphi, \mu)\neq \emptyset\};
\end{equation}
and the $\Fl_0$-th  descent $\FD_{\Fl_0}(\varphi, \mu)$ is called the {\it first descent} of the enhanced $L$-parameter $(\varphi,\mu)$. The following is the first important result on 
descents of enhanced $L$-parameters. 

\begin{thm}[Theorem 5.3 of \cite{JLZ22}]\label{thm:DFD}
For any enhanced $L$-parameter $(\varphi,\mu)$ with $\varphi\in\widetilde{\Phi}_\gen(G_n^*)$ and $\mu\in\wh{\CS_\varphi}$, the first descent $\FD_{\Fl_0}(\varphi, \mu)$ at 
the first occurrence index $\Fl_0=\Fl_0(\varphi, \mu)$ contains only of discrete enhanced $L$-parameters. 
\end{thm}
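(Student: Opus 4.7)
The plan is to argue by contradiction. I would suppose $(\wh\phi,\wh\nu)\in\FD_{\Fl_0}^z(\varphi,\mu)$ for some $z\in\CZ$, with $\phi$ failing to be discrete. The goal is then to manufacture a strictly ``deeper'' descent of $(\varphi,\mu)$ at some $\Fl>\Fl_0$, contradicting the maximality defining $\Fl_0$ in \eqref{FO}.

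Writing $\phi$ in the form \eqref{edec}, non-discreteness means that at least one of the following occurs: $\phi$ has a non-(conjugate)-self-dual irreducible summand $\phi_1$ (in which case ${}^c\phi_1^\vee$ is also a summand of $\phi$); $\phi$ has a (conjugate) self-dual summand $\phi_1$ of bad parity; or $\phi$ has a good-parity irreducible summand $\phi_1$ appearing with multiplicity $\geq 2$. In each of these three scenarios I would form a reduced parameter $\phi'$ by deleting $\phi_1\oplus{}^c\phi_1^\vee$ (or, in the last two scenarios, two copies of $\phi_1$). Then $\dim\phi'=\dim\phi-2\dim\phi_1$, and the candidate new index is $\Fl=\Fl_0+2\dim\phi_1$, which is strictly larger than $\Fl_0$ and has the same parity modulo $2$.

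The core of the argument, and where the main technical labor lies, is to produce $\nu'\in\wh{\CS_{\phi'}}$ and $z'\in\CZ$ such that $(\chi^{z'}_{\varphi,\phi'},\chi^{z'}_{\phi',\varphi})=(\mu,\nu')$, so that $(\wh{\phi'},\wh{\nu'})\in\FD_{\Fl}^{z'}(\varphi,\mu)$. By the general structure \eqref{2grp}, non-self-dual and bad-parity summands do not enter $\CS_\phi$, so in the first two scenarios $\CS_\phi=\CS_{\phi'}$ canonically and I would take $\nu'=\nu$. In the multiplicity scenario there is a canonical surjection $\CS_\phi\twoheadrightarrow\CS_{\phi'}$, and one must show that $\nu$ is trivial on its kernel, after which $\nu'$ is the induced character. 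The verification that the distinguished-character identity persists reduces, via the explicit formulas \eqref{char-O}, \eqref{char-spmp}, \eqref{char-U}, to computing the contribution of the extracted summand to $\prod_{i\in\RI_\gp}\varepsilon(\varphi_i\otimes\phi,\psi)^{e_i}$ and to the accompanying $(\det\phi)(-1)^{\dim\varphi_i/2}$ and dimension-parity factors. By standard properties of local $\varepsilon$-factors together with the self-duality of $\varphi_i$, the product $\varepsilon(\varphi_i\otimes\phi_1,\psi)\,\varepsilon(\varphi_i\otimes{}^c\phi_1^\vee,\psi)$ simplifies to an explicit determinantal value, and this extra contribution matches exactly the shift induced by moving from $z$ to an appropriate $z'$ via the $\CZ$-action \eqref{Z-action} given by the $\eta_z$ in \eqref{eta}.

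The main obstacle is the uniform bookkeeping across all cases of $(G_n^*,H_{\lfloor(\frak{n}-\Fl)/2\rfloor}^*)$ in \eqref{relevantpair}---orthogonal, symplectic/metaplectic, and the two unitary settings---since the character formulas and the $\eta_z$-twist take different shapes, and the necessary $\varepsilon$-factor identities must be combined with the correct corrections by the discriminant (in the metaplectic case) and by the splitting character $\xi$ (in the unitary Fourier--Jacobi case). Once the character identity is verified in each case, the construction produces the claimed descent at the strictly larger index $\Fl$, giving the desired contradiction and forcing every element of $\FD_{\Fl_0}(\varphi,\mu)$ to be discrete.
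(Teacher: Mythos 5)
You should first note that this paper does not prove Theorem \ref{thm:DFD} at all: it is quoted verbatim as Theorem 5.3 of \cite{JLZ22}, so there is no in-paper argument to compare against. That said, your strategy is the natural one and is exactly the converse of the tower property proved here as Proposition \ref{prop:TP} (there one \emph{adds} a non-self-dual pair $\phi_0\oplus\phi_0^\vee$ to descend to a smaller index; you \emph{remove} the non-discrete piece $\phi_1\oplus{}^c\phi_1^\vee$ or $2\phi_1$ to ascend to $\Fl=\Fl_0+2\dim\phi_1$ and contradict the maximality in \eqref{FO}). Your trichotomy for non-discreteness is the right one, and it correctly absorbs the non-tempered Langlands-type summands $\varphi_j|\cdot|^{\pm\beta_j}$ into the non-self-dual case.

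Two comments on the execution. First, the step where you worry about pushing $\nu$ forward along $\CS_\phi\twoheadrightarrow\CS_{\phi'}$ and checking triviality on the kernel is unnecessary: by Definition \ref{defn:pd}, the character on the $H$-side is \emph{defined} to be $\nu'=\chi^{z'}_{\phi',\varphi}$, so the only genuine constraint for $\FD^{z'}_{\Fl}(\varphi,\mu)\neq\emptyset$ is the identity $\chi^{z'}_{\varphi,\phi'}=\mu$ on $\CS_\varphi$, whose structure does not change. Second, your assertion that the $\varepsilon$-factor correction ``matches exactly the shift from $z$ to $z'$'' is the one place that genuinely needs to be checked and is stated too optimistically. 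For instance, in the even orthogonal case the ratio $\chi_{\varphi,\phi}/\chi_{\varphi,\phi'}$ at the coordinate $e_i$ works out, via $\varepsilon(\rho,\psi)\varepsilon(\rho^\vee,\psi)=\det\rho(-1)$, to $\bigl((\det\phi_1)(-1)\bigr)^{e_i\dim\varphi_i}$, which is not of the shape $(\det\varphi_i,a)_F^{e_i}$ of $\eta_a$; rather, it is trivial on $\CS_\varphi$ because $\sum_i e_i\dim\varphi_i$ is even there (or because all $\dim\varphi_i$ are even), so one takes $z'=z$. So the correct statement is that the correction is either killed by the parity constraint defining $\CS_\varphi\subset A_\varphi$ or absorbed by an $\eta$-twist, and this must be verified case by case (with the extra $\varphi(z)$-twist in the metaplectic case and the $\xi$-twist in the unitary Fourier--Jacobi case). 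With that caveat, the skeleton of your argument is sound.
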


\subsection{Proof of Proposition \ref{prop:sne}}\label{ssec-PSNE}
We are ready to prove Proposition \ref{prop:sne}, that is, for any $\pi\in\Pi_\varphi[G_n^*]$ with $\varphi$ generic, there exists an integer $p_1$ with $0<p_1\leq\Fn$, such that 
the spectrum $\Spec_{\CO_{p_1}}(\pi)$ is non-empty for some $F$-rational nilpotent orbit 
$\CO_{p_1}$ in the $F$-stable nilpotent orbit $\CO_{p_1}^\st$ associated with the partition 
$[p_1,1^{\Fn-p_1}]$. 

By the Vogan version of the local Langlands correspondence $\iota_a$ for some fixed $a\in\CZ$, we write 
\[
\pi=\pi_a(\varphi,\mu)
\]
for some character $\mu\in\wh{\CS_\varphi}$, which is unique. By \cite[Proposition 5.2]{JLZ22}, there exists an integer $\Fl$ with $0<\Fl\leq\Fn$, such that the descent 
$\FD_\Fl(\varphi,\mu)$ is non empty. This means that there exists an enhanced $L$-parameter 
$(\phi,\nu)$ of $H_{\lfloor(\frak{n}-\Fl)/2\rfloor}^*(F)$ with $\phi$ generic, such that the contragredient 
$(\wh\phi,\wh\nu)$ belongs to the $\Fl$-th descent $\FD_\Fl(\varphi,\mu)$. 

From Definition \ref{defn:pd}, there exists a number $z\in\CZ$, such that 
\[
(\chi^z_{\varphi,\phi}, \chi^z_{\phi,\varphi})=(\mu,\nu),
\]
where $(\chi^z_{\varphi,\phi},\chi^z_{\phi,\varphi})$ is defined, case by case, as in Section {\rm \ref{ssec-DC}}. By the Vogan version of the local Langlands correspondence, there 
exists an $F$-pure inner form $H_{\lfloor(\frak{n}-\Fl)/2\rfloor}$ of 
$H_{\lfloor(\frak{n}-\Fl)/2\rfloor}^*$ and a 
$\sigma\in\Pi_\phi(H_{\lfloor(\frak{n}-\Fl)/2\rfloor})$, such that the pair 
$(\pi,\sigma)$ is the distinguished pair in the local Vogan packet 
$\Pi_{\varphi\otimes\phi}[G_n^*\times H_{\lfloor(\frak{n}-\Fl)/2\rfloor}^*]$. Hence there exists an $F$-rational nilpotent orbit $\CO_\Fl$, associated with the partition 
$[\Fl,1^{\Fn-\Fl}]$, such that $\sigma^\vee$ belongs to the spectrum 
$\Spec_{\CO_\Fl}(\pi)$. Finally, by taking $p_1=\Fl$, we finish the proof of 
Proposition \ref{prop:sne}.

\quad

From the proof of Proposition \ref{prop:sne}, we obtain a stronger version of 
Proposition \ref{prop:sne}.

\begin{cor}\label{cor:sne}
If a representation $\pi\in\Pi_F(G_n)$ has a generic local $L$-parameter, then there exists at least one integer $p_1$ with $0<p_1\leq\Fn$, such that the spectrum $\Spec_{p_1}(\pi)$ 
contains at least one $\sigma^\vee$, which has a generic $L$-parameter. 
\end{cor}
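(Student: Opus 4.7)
The plan is to inspect the proof of Proposition \ref{prop:sne} and observe that the representation $\sigma$ produced there already has a generic $L$-parameter, so essentially no new argument is needed beyond a routine check that the contragredient preserves genericity.

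More precisely, starting from $\pi=\pi_a(\varphi,\mu)$ with $\varphi\in\Phi_\gen(G_n^*)$, I would apply Proposition 5.2 of \cite{JLZ22} to produce an integer $\Fl$ with $0<\Fl\leq\Fn$ such that $\FD_\Fl(\varphi,\mu)$ is nonempty. By Definition \ref{defn:pd}, any element of this descent is a contragredient $(\wh\phi,\wh\nu)$ of an enhanced $L$-parameter $(\phi,\nu)$ with $\phi\in\widetilde{\Phi}_\gen(H_{\lfloor(\Fn-\Fl)/2\rfloor}^*)$, so $\phi$ is automatically generic by construction of the set $\widetilde{\Phi}_\gen$. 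Then exactly as in the proof of Proposition \ref{prop:sne}, the local Gan-Gross-Prasad conjecture (Section \ref{ssec-LGGP}), applied to the relevant pair $(G_n^*,H_{\lfloor(\Fn-\Fl)/2\rfloor}^*)$ with distinguished character pair $(\chi^z_{\varphi,\phi},\chi^z_{\phi,\varphi})=(\mu,\nu)$, produces an $F$-pure inner form $H_{\lfloor(\Fn-\Fl)/2\rfloor}$ and a representation $\sigma\in\Pi_\phi(H_{\lfloor(\Fn-\Fl)/2\rfloor})$ with $\sigma^\vee\in\Spec_{\CO_\Fl}(\pi)$ for a suitable $F$-rational orbit $\CO_\Fl$.

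It then remains only to check that the $L$-parameter of $\sigma^\vee$, namely $\wh\phi$, is generic. This follows from Proposition \ref{prop:dualdata}: in each of the four cases listed there, $\wh\phi$ is obtained from $\phi$ either as $\phi$ itself, as a twist $\phi(-1)$, or as the dual $\phi^\vee$; and in each case the underlying representation of $\CL_E$ differs from $\phi$ only by a one-dimensional twist or by taking the contragredient. Genericity of an $L$-parameter for the classical groups under consideration (equivalently, non-vanishing of the adjoint $L$-factor at $s=1$, or equivalently the existence of a generic member in the $L$-packet) is preserved under such operations. Hence $\wh\phi\in\widetilde{\Phi}_\gen$, and $\sigma^\vee$ has a generic $L$-parameter as required, with $p_1=\Fl$.

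I do not anticipate a serious obstacle: the argument is a direct refinement of the proof of Proposition \ref{prop:sne}. The only point requiring any care is the preservation-of-genericity statement for contragredients, which is a well-known property that follows at once from Proposition \ref{prop:dualdata} combined with the definition of genericity given in Section \ref{ssec-LVP}.
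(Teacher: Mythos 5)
Your proposal is correct and matches the paper's approach exactly: the paper derives Corollary \ref{cor:sne} simply by observing that the $\sigma$ constructed in the proof of Proposition \ref{prop:sne} lies in $\Pi_\phi(H_{\lfloor(\Fn-\Fl)/2\rfloor})$ with $\phi$ generic, so its contragredient has the generic parameter $\wh\phi$. Your additional check, via Proposition \ref{prop:dualdata}, that genericity is preserved under contragredient is the (routine) detail the paper leaves implicit.
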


By the relation between parabolic induction and the multiplicities, we are able to prove 
the following even stronger result. 

\begin{prop}\label{prop:stempered}
If a representation $\pi\in\Pi_F(G_n)$ has a generic local $L$-parameter, then there exists at least one integer $p_1$ with $0<p_1\leq\Fn$, such that the spectrum $\Spec_{p_1}(\pi)$ 
contains at least one $\sigma^\vee$, which is tempered. 
\end{prop}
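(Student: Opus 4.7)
The plan is to start from the weaker spectrum statement of Corollary \ref{cor:sne} and use the Langlands decomposition of its output together with Proposition \ref{prop:PI} to peel off the non-tempered parabolic induction data and extract a tempered representation in the spectrum of $\pi$.

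First, Corollary \ref{cor:sne} provides an integer $p_1$ with $0 < p_1 \leq \Fn$, an $F$-rational orbit $\CO_{p_1} \subset \CO_{p_1}^\st$, and a representation $\sigma^\vee \in \Spec_{\CO_{p_1}}(\pi)$ whose $L$-parameter is generic. I would then write $\sigma$ in its unique Langlands form
\[
\sigma = \CL(\underline{s}, \tau_1, \ldots, \tau_t, \sigma_0)
\]
as in \eqref{SM1} (or the genuine version \eqref{SM-Mp} in the metaplectic case), so that $\sigma$ is the Langlands quotient of the standard module $\RI = \RI(\underline{s}, \tau_1, \ldots, \tau_t, \sigma_0)$ and $\sigma_0$ is irreducible tempered, living on $H_{\lfloor(\Fn-p_1)/2\rfloor - p_0}(F)$ with $p_0 = n_1 + \cdots + n_t$.

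Next, the assumption $\sigma^\vee \in \Spec_{\CO_{p_1}}(\pi)$ is precisely
\[
\Hom_{H_{\CO_{p_1}}(F)}\bigl(\CJ_{\CO_{p_1}}(\pi) \wh{\otimes} \sigma,\ \mathbbm{1}\bigr) \neq 0.
\]
Precomposing any nonzero functional here with the surjection $\RI \twoheadrightarrow \sigma$ yields a nonzero element of $\Hom_{H_{\CO_{p_1}}(F)}(\CJ_{\CO_{p_1}}(\pi) \wh{\otimes} \RI, \mathbbm{1})$. Proposition \ref{prop:PI} then produces an $F$-rational nilpotent orbit $\CO_{p_1 + 2p_0}$ such that $\sigma_0^\vee \in \Spec_{\CO_{p_1 + 2p_0}}(\pi)$. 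Since $\sigma_0$ is tempered, so is $\sigma_0^\vee$; noting $0 < p_1 + 2p_0 \leq \Fn$ because $\sigma_0$ is a representation of $H_{\lfloor(\Fn-p_1)/2\rfloor - p_0}(F)$, this $\sigma_0^\vee$ serves as the desired tempered member of the spectrum at the level $p_1 + 2p_0$.

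The hard part is that the entire argument rests on Proposition \ref{prop:PI}, whose content in the reducible case of the standard module $\RI$ is the substantive input; this is carried out in Section \ref{sec-PPI} via the stronger Proposition \ref{pro: multiplicity}, with the Fourier-Jacobi non-archimedean case requiring particular care via the method of M\oe glin--Waldspurger. Once Proposition \ref{prop:PI} is in hand, the Langlands-decomposition argument above uniformly transports the generic-parameter element produced by Corollary \ref{cor:sne} to a tempered element in some (possibly larger) spectrum layer, and no further case analysis across the Bessel and Fourier-Jacobi settings is needed.
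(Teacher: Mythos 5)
Your argument is correct and is essentially the paper's: the paper proves this proposition by citing Propositions \ref{prop:sne} and \ref{prop:PI} directly, and the intermediate steps you spell out (writing $\sigma$ as the Langlands quotient of a standard module, pulling the nonzero functional back through the surjection $\RI\twoheadrightarrow\sigma$, then applying Proposition \ref{prop:PI} to land on the tempered $\sigma_0^\vee$ at level $p_1+2p_0$) are exactly the ones the paper itself carries out in the proofs of Theorems \ref{thm:AFD} and \ref{thm:SA-foi}. Your substitution of Corollary \ref{cor:sne} for Proposition \ref{prop:sne} is harmless, since the genericity of $\sigma$'s parameter is not actually needed for the peeling step.
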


\begin{proof}
This follows directly from Propositions \ref{prop:sne} and \ref{prop:PI}. 
\end{proof}

\section{Spectrum and Arithmetic First Descent}\label{sec-SDAFD}


\subsection{Arithmetic first descent}\label{ssec-afd}

We take the local Langlands correspondence $\iota_a$ with the $F$-rationality determined by $a\in\CZ$. For any generic $L$-parameter $\varphi\in\widetilde{\Phi}_\gen(G_n^*)$, any member $\pi\in\Pi_F[G_n^*]$ corresponds with a unique $\mu\in\wh{\CS_\varphi}$ such that $\pi=\pi_a(\varphi,\mu)$. By \cite[Proposition 5.2]{JLZ22}, the first occurrence index 
$\Fl_0=\Fl_0(\varphi,\mu)$ of the enhanced $L$-parameter $(\varphi,\mu)$ exists. 
In this case, we define 
the {\sl arithmetic first occurrence index} $\Ff_\Fa(\pi)$ of $\pi$ to be the 
first occurrence index $\Fl_0=\Fl_0(\varphi,\mu)$ of $(\varphi,\mu)$:
\begin{equation}\label{afoi}
    \Ff_\Fa(\pi):=\Fl_0(\varphi,\mu).
\end{equation}
The spectral descent $\Spec_{\Ff_\Fa(\pi)}(\pi)$ of $\pi$ at the arithmetic first occurrence index $\Ff_\Fa(\pi)$ is called the {\sl arithmetic first descent} of $\pi$.

\subsection{Discreteness of arithmetic first descent}\label{ssec-AFD}

By Theorem \ref{thm:DFD} and the local Gan-Gross-Prasad conjecture, we obtain the following discreteness result. 

\begin{thm}[Discreteness]\label{thm:AFD}
Given any generic $L$-parameter $\varphi\in\widetilde{\Phi}_\gen(G_n^*)$, for any 
$\pi\in\Pi_\varphi[G_n^*]$ with $\pi=\pi_a(\varphi,\mu)$ for some $\mu\in\wh{\CS_\varphi}$, the arithmetic first descent $\Spec_{\Ff_\Fa(\pi)}(\pi)$ consists exactly of the  
discrete series representations $\sigma^\vee$ whose enhanced $L$-parameters are given in the first descent $\FD_{\Fl_0}(\varphi, \mu)$ at 
the first occurrence index $\Fl_0=\Fl_0(\varphi, \mu)$. 
\end{thm}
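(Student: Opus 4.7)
My plan is to establish the theorem by a double inclusion, combining the local Gan--Gross--Prasad conjecture (Section \ref{ssec-LGGP}), the structural result Theorem \ref{thm:DFD}, and the parabolic-induction compatibility Proposition \ref{prop:PI}. By definition $\Ff_\Fa(\pi) = \Fl_0(\varphi,\mu)$, so I must identify $\Spec_{\Fl_0}(\pi) = \bigcup_{\CO_{\Fl_0}} \Spec_{\CO_{\Fl_0}}(\pi)$ with the set of discrete series $\sigma^\vee$ whose enhanced $L$-parameters lie in $\FD_{\Fl_0}(\varphi,\mu)$.

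For the inclusion $\supseteq$, I take an enhanced $L$-parameter $(\wh\phi, \wh\nu) \in \FD_{\Fl_0}^z(\varphi, \mu)$ for some $z \in \CZ$. Theorem \ref{thm:DFD} guarantees that $\phi$ is a discrete $L$-parameter of $H_{\lfloor(\frak{n}-\Fl_0)/2\rfloor}^*$, and Definition \ref{defn:pd} forces the matching of twisted distinguished characters $(\chi^z_{\varphi,\phi}, \chi^z_{\phi,\varphi}) = (\mu,\nu)$. The local Gan--Gross--Prasad conjecture, now a theorem for the classical groups considered (Section \ref{ssec-LGGP}), then produces a unique pure inner form $H$ and a unique $\sigma \in \Pi_\phi(H)$ attached to $\nu$ via the appropriate Vogan correspondence, such that $(\pi,\sigma)$ is the distinguished pair. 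Hence $\sigma^\vee \in \Spec_{\CO_{\Fl_0}}(\pi)$ for the $F$-rational orbit $\CO_{\Fl_0}$ determined by $z$, and since $\phi$ is discrete, $\sigma$ (and therefore $\sigma^\vee$) is a discrete series representation.

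For the inclusion $\subseteq$, suppose $\sigma^\vee \in \Spec_{\CO_{\Fl_0}}(\pi)$ for some $F$-rational orbit $\CO_{\Fl_0}$. I first reduce to the tempered case. Writing $\sigma$ as the Langlands quotient $\CL(\udl{s},\tau_1,\dots,\tau_t,\sigma_0)$ of the standard module $\RI(\udl{s},\tau_1,\dots,\tau_t,\sigma_0)$ in \eqref{SM1} (or \eqref{SM-Mp} in the metaplectic case) with $\sigma_0$ irreducible tempered, the surjection $\RI \twoheadrightarrow \sigma$ produces a surjection $\CJ_{\CO_{\Fl_0}}(\pi) \wh\otimes \RI \twoheadrightarrow \CJ_{\CO_{\Fl_0}}(\pi) \wh\otimes \sigma$, which transfers the assumed non-vanishing of $\Hom_{H_{\CO_{\Fl_0}}(F)}(\CJ_{\CO_{\Fl_0}}(\pi) \wh\otimes \sigma, \mathbbm{1})$ to a non-vanishing of $\Hom_{H_{\CO_{\Fl_0}}(F)}(\CJ_{\CO_{\Fl_0}}(\pi) \wh\otimes \RI, \mathbbm{1})$. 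Proposition \ref{prop:PI} then yields $\sigma_0^\vee \in \Spec_{\CO_{\Fl_0 + 2p_0}}(\pi)$ for some $F$-rational orbit, with $p_0 = n_1 + \cdots + n_t$. If $p_0 > 0$, applying the tempered case of Gan--Gross--Prasad to $(\pi,\sigma_0)$ forces the enhanced $L$-parameter $(\phi_0,\nu_0)$ of $\sigma_0$ to satisfy $(\wh{\phi_0},\wh{\nu_0}) \in \FD_{\Fl_0+2p_0}^{z'}(\varphi,\mu)$ for some $z' \in \CZ$, contradicting the maximality of $\Fl_0 = \Fl_0(\varphi,\mu)$. Therefore $p_0 = 0$, $\sigma = \sigma_0$ is tempered, and a final application of Gan--Gross--Prasad places $(\wh\phi, \wh\nu) \in \FD_{\Fl_0}(\varphi,\mu)$; by Theorem \ref{thm:DFD}, $\sigma$ is then automatically a discrete series representation.

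The main obstacle is the bookkeeping of Whittaker data in the $\subseteq$ step: one must verify case by case (orthogonal, symplectic--metaplectic, unitary Bessel, unitary Fourier--Jacobi, as in Section \ref{ssec-LGGP}) that the twisted distinguished characters arising from the tempered pair $(\pi,\sigma_0)$ match some $\FD_{\Fl_0+2p_0}^{z'}(\varphi,\mu)$, where $z'$ is the element of $\CZ$ compatible with the relevant pure inner form of $\sigma_0$, the orbit $\CO_{\Fl_0+2p_0}$, and the fixed Whittaker parameter $a$ through \eqref{char-O}--\eqref{chi-a}. Once this compatibility is tracked uniformly, both inclusions follow formally from the tools assembled in Sections \ref{ssec-LGGP}--\ref{ssec-DELP}.
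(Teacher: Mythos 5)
Your proposal is correct and follows essentially the same route as the paper's proof: the $\supseteq$ direction via Definition \ref{defn:pd}, Theorem \ref{thm:DFD} and the local Gan--Gross--Prasad theorem, and the $\subseteq$ direction by passing to the standard module, invoking Proposition \ref{prop:PI}, and using the maximality of $\Fl_0(\varphi,\mu)$ to force $p_0=0$ and temperedness before concluding discreteness from Theorem \ref{thm:DFD}. No substantive differences.
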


\begin{proof}
For any $\sigma^\vee\in \Spec_{\Ff_\Fa(\pi)}(\pi)$ with $\Ff_\Fa(\pi)=\Fl_0(\varphi, \mu)$, there exists an $F$-rational nilpotent orbit $\CO_{\Ff_\Fa(\pi)}$, such that $\sigma^\vee\in\Spec_{\CO_{\Ff_\Fa(\pi)}}(\pi)$.
By definition, we must have that 
\[
\Hom_{H_{\CO_{\Ff_\Fa(\pi)}}(F)}(\CJ_{\CO_{\Ff_\Fa(\pi)}}(\pi)\,\wh\otimes\,\sigma, \mathbbm{1})\neq 0. 
\]
We realize $\sigma$ as the Langlands quotient 
$\CL(\udl{s},\tau_1,\dots,\tau_t,\sigma_0)$
of the standard module 
$\RI(\udl{s},\tau_1,\dots\tau_t,\sigma_0)$ as displayed in \eqref{SM1}, and obtain that 
\begin{equation} \label{pia3}
\Hom_{H_{\CO_{\Ff_\Fa(\pi)}}(F)}(\CJ_{\CO_{\Ff_\Fa(\pi)}}(\pi)\,\wh\otimes\,\RI(\udl{s},\tau_1,\dots\tau_t,\sigma_0)
, \mathbbm{1})\neq 0, 
\end{equation}
for the $F$-rational nilpotent orbit $\CO_{\Ff_\Fs(\pi)}$. 
By Proposition \ref{prop:PI}, we obtain that 
\[
    \Hom_{H_{\CO_{\Ff_\Fa(\pi)+2p_0}}(F)}(\CJ_{\CO_{\Ff_\Fa(\pi)+2p_0}}(\pi)\,\wh\otimes\,\sigma_0, \mathbbm{1})\neq 0
\]
for some $F$-rational nilpotent orbit $\CO_{\Ff_\Fa(\pi)+2p_0}$, which implies that the irreducible tempered representation 
$\sigma_0^\vee$ belongs to the spectrum $\Spec_{\CO_{\Ff_\Fa(\pi)+2p_0}}(\pi)$. By the Vogan version of the local Langlands correspondence, we write 
\[
\sigma_0=\sigma_a(\phi,\nu)
\]
for some generic (tempered) $L$-parameter $\phi$ of $H_{\lfloor(\frak{n}-\Ff_\Fa(\pi)-2p_0)/2\rfloor}^*$ and $\nu\in\wh{\CS_\phi}$. 
Since $(\pi,\sigma_0)$ is the unique distinguished pair in the local Vogan packet 
$\Pi_{\varphi\otimes\phi}[G_n^*\times H_{\lfloor(\frak{n}-\Ff_\Fa(\pi)-2p_0)/2\rfloor}^*]$, we must have, from the descent of the enhanced $L$-parameter $(\varphi,\mu)$ (Definition \ref{defn:pd}), that $(\wh\phi,\wh\nu)$ belongs to the $(\Ff_\Fa(\pi)+2p_0)$-th descent $\FD_{\Ff_\Fa(\pi)+2p_0}(\varphi,\mu)$ of 
$(\varphi,\mu)$. Since $\Ff_\Fa(\pi)=\Fl_0(\varphi, \mu)$ is the first occurrence index of 
the enhanced $L$-parameter $(\varphi,\mu)$, we must have that 
\[
\Ff_\Fa(\pi)+2p_0\leq\Ff_\Fa(\pi)=\Fl_0(\varphi, \mu).
\]
Hence we obtain that $p_0=0$ and $\sigma=\sigma_0$ is tempered. Moreover, we have that 
$(\wh\phi,\wh\nu)$ belongs to the first descent $\FD_{\Ff_\Fa(\pi)}(\varphi,\mu)$ of 
$(\varphi,\mu)$. By Theorem \ref{thm:DFD}, the enhanced $L$-parameter $(\phi,\nu)$ must be 
discrete, and hence $\sigma$ must be a discrete series representation. 

Conversely, if $\sigma$ is a discrete series representation with enhanced $L$-parameter $(\phi,\nu)$, whose contragredient $(\wh\phi,\wh\nu)$ belongs to the first descent $\FD_{\Fl_0}(\varphi, \mu)$ at 
the first occurrence index $\Fl_0=\Fl_0(\varphi, \mu)$, we are going to show that 
$\sigma^\vee\in\Spec_{\Ff_\Fa(\pi)}(\pi)$. 
In fact, since the discrete enhanced $L$-parameter $(\phi,\nu)$ of $\sigma$ has the property that its dual $(\wh\phi,\wh\nu)$ belongs to the first descent $\FD_{\Fl_0}(\varphi, \mu)$, 
by the definition of descent of enhanced $L$-parameters (Definition \ref{defn:pd}) and the local 
Gan-Gross-Prasad conjecture, $(\pi,\sigma)$ must be the distinguished pair in the local 
Vogan packet $\Pi_{\varphi\otimes\phi}[G_n^*\times H_{\lfloor(\frak{n}-\Ff_\Fa(\pi))/2\rfloor}^*]$. Thus, $\sigma^\vee\in\Spec_{\CO_{\Ff_\Fa(\pi)}}(\pi)$ for some $F$-rational nilpotent orbit 
$\CO_{\Ff_\Fa(\pi)}$. This proves that $\sigma^\vee\in\Spec_{\Ff_\Fa(\pi)}(\pi)$.
\end{proof}

\subsection{Proof of Theorem  \ref{thm:SA-foi}}\label{sec-PC-SAFOI}

Theorem \ref{thm:SA-foi} asserts that for any given generic $L$-parameter $\varphi$ of $G_n^*(F)$ and for any $\pi\in\Pi_\varphi[G_n^*]$, if $\pi$ has its enhanced $L$-parameter 
$(\varphi,\mu)$, i.e. 
\begin{equation}\label{pia1}
    \pi=\pi_a(\varphi,\mu)
\end{equation}
holds for some $\mu\in\wh{\CS_\varphi}$ (which is unique), then the spectral first occurrence index $\Ff_\Fs(\pi)$ is equal to the arithmetic first occurrence index $\Ff_\Fa(\pi)$, i.e.
\begin{equation}\label{sa-foi}
\Ff_\Fs(\pi)=\Ff_\Fa(\pi),
\end{equation}
where $\Ff_\Fa(\pi):=\Fl_0(\varphi,\mu)$, the first occurrence index of $(\varphi,\mu)$.
We are going to prove the identity in \eqref{sa-foi}. 

First, we prove that $\Ff_\Fa(\pi)\leq\Ff_\Fs(\pi)$. For the given $(\varphi,\mu)$ as in \eqref{pia1}, the first occurrence index $\Fl_0=\Fl_0(\varphi,\mu)$, which is equal to $\Ff_\Fa(\pi)$, is an integer with 
$0<\Fl_0\leq\Fn$. Take an enhanced $L$-parameter $(\phi,\nu)$ with $\phi$ generic, such that 
\[
(\wh\phi,\wh\nu)\in\FD_{\Fl_0}(\varphi,\mu).
\]
As in the proof of Proposition \ref{prop:sne}, the contragredient $\sigma^\vee$ of the representation
\[
\sigma=\sigma_a(\phi,\nu)
\]
belongs to the spectrum $\Spec_{\CO_{\Fl_0}}(\pi)$. In particular, the spectrum 
$\Spec_{\Fl_0}(\pi)$ is not empty. By definition of the spectral first occurrence index, we must have 
\begin{equation}\label{pia2}
    \Ff_\Fa(\pi)=\Fl_0(\varphi,\mu)\leq\Ff_\Fs(\pi). 
\end{equation}

Now we prove the converse: $\Ff_\Fa(\pi)\geq\Ff_\Fs(\pi)$. If $\sigma^\vee\in\Spec_{\Ff_\Fs(\pi)}(\pi)$, then there exists an $F$-rational nilpotent orbit $\CO_{\Ff_\Fs(\pi)}$, such that $\sigma^\vee\in\Spec_{\CO_{\Ff_\Fs(\pi)}}(\pi)$. By definition, we must have that 
\[
\Hom_{H_{\CO_{\Ff_\Fs(\pi)}}(F)}(\CJ_{\CO_{\Ff_\Fs(\pi)}}(\pi)\,\wh\otimes\,\sigma, \mathbbm{1})\neq 0. 
\]
We realize $\sigma$ as the Langlands quotient 
$\CL(\udl{s},\tau_1,\dots,\tau_t,\sigma_0)$
of the standard module 
$\RI(\udl{s},\tau_1,\dots\tau_t,\sigma_0)$ as in \eqref{SM1}, we obtain that 
\begin{equation} \label{pia3}
\Hom_{H_{\CO_{\Ff_\Fs(\pi)}}(F)}(\CJ_{\CO_{\Ff_\Fs(\pi)}}(\pi)\,\wh\otimes\,\RI(\udl{s},\tau_1,\dots\tau_t,\sigma_0)
, \mathbbm{1})\neq 0, 
\end{equation}
for the $F$-rational nilpotent orbit $\CO_{\Ff_\Fs(\pi)}$. 
By Proposition \ref{prop:PI}, we obtain that 
\[
    \Hom_{H_{\CO_{\Ff_\Fs(\pi)+2p_0}}(F)}(\CJ_{\CO_{\Ff_\Fs(\pi)+2p_0}}(\pi)\,\wh\otimes\,\sigma_0,\mathbbm{1})\neq 0
\]
for some $F$-rational nilpotent orbit $\CO_{\Ff_\Fs(\pi)+2p_0}$, which implies that the irreducible tempered representation 
$\sigma_0^\vee$ belongs to the spectrum $\Spec_{\CO_{\Ff_\Fs(\pi)+2p_0}}(\pi)$, and in particular the spectrum $\Spec_{\Ff_\Fs(\pi)+2p_0}(\pi)$ is not empty. 
Since $\Ff_\Fs(\pi)$ is the spectral first occurrence index of $\pi$, we must have 
\[
\Ff_\Fs(\pi)\geq \Ff_\Fs(\pi)+2p_0, 
\]
which implies that $p_0=0$, and $\sigma=\sigma_0$ is tempered. By the Vogan version of the 
local Langlands correspondence, we obtain that 
\[
\sigma=\sigma_a(\phi,\nu)
\]
for some generic (tempered) $L$-parameter $\phi$ of $H_{\lfloor(\frak{n}-\Ff_\Fs(\pi))/2\rfloor}^*$ and $\nu\in\wh{\CS_\phi}$. 
Since $(\pi,\sigma)$ is the unique distinguished pair in the local Vogan packet 
$\Pi_{\varphi\otimes\phi}[G_n^*\times H_{\lfloor(\frak{n}-\Ff_\Fs(\pi))/2\rfloor}^*]$, we must have, from the descent of the enhanced $L$-parameter $(\varphi,\mu)$ (Definition \ref{defn:pd}), that $(\wh\phi,\wh\nu)$ belongs to the $\Ff_\Fs(\pi)$-th descent $\FD_{\Ff_\Fs(\pi)}(\varphi,\mu)$ of 
$(\varphi,\mu)$. By the definition of the first occurrence index $\Fl_0(\varphi,\mu)$, 
we obtain that 
\[
\Ff_\Fs(\pi)\leq \Fl_0(\varphi,\mu)=\Ff_\Fa(\pi).
\]
Combining with \eqref{pia2}, we prove that $\Ff_\Fs(\pi)=\Ff_\Fa(\pi)$. This proves 
Theorem \ref{thm:SA-foi}.

It is important to mention that Theorems \ref{thm:SA-foi} and \ref{thm:AFD} imply the main result of this paper (Theorem \ref{thm:ADS}), which extends the main result (\cite[Theorem 1.7]{JZ18}) to the great generality.

\subsection{Submodule Theorem}\label{ssec-ST}

We are going to prove Theorem \ref{thm:SM} in this section. 

For any given $\pi\in\Pi_F(G_n)$ with a generic $L$-parameter, By Theorems 
\ref{thm:SA-foi} and \ref{thm:AFD}, we have that by taking 
\[
p_1:=\Ff_\Fs(\pi)=\Ff_\Fa(\pi)
\]
there exists an irreducible discrete series representation $\sigma$ of $H_{\CO_{p_1}}$ 
for some $F$-rational nilpotent orbit $\CO_{p_1}$, such that 
$\sigma^\vee$ belongs to the spectrum $\Spec_{\CO_{p_1}}(\pi)$. This means that 
\[
\Hom_{H_{\CO_{p_1}}(F)}(\CJ_{\CO_{p_1}}(\pi)\,\wh\otimes\,\sigma, \mathbbm{1})\neq 0. 
\]

\begin{lem}\label{frob}
For any $\pi\in\Pi_F(G_n)$ with a generic $L$-parameter, if $\sigma$ is an irreducible smooth representation of $H_{\CO_{p_1}}(F)$, then 
\[
\Hom_{H_{\CO_{p_1}}(F)}(\CJ_{\CO_{p_1}}(\pi)\,\wh\otimes\,\sigma,\mathbbm{1})
\cong
\Hom_{H_{\CO_{p_1}}(F)}(\CJ_{\CO_{p_1}}(\pi),\sigma^\vee)
\]
where $\sigma^\vee$ is the contragredient of $\sigma$. 
\end{lem}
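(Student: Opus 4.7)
The plan is to prove the isomorphism by a standard tensor-hom adjunction, adapted to accommodate both the smooth setting (when $F$ is non-archimedean) and the Casselman-Wallach setting (when $F$ is archimedean). Given an $H_{\CO_{p_1}}(F)$-invariant (continuous, in the archimedean case) functional
\[
\lambda \in \Hom_{H_{\CO_{p_1}}(F)}(\CJ_{\CO_{p_1}}(\pi)\,\wh\otimes\,\sigma, \mathbbm{1}),
\]
I define a map $T_\lambda \colon \CJ_{\CO_{p_1}}(\pi) \to \sigma^*$ by $T_\lambda(v)(w) := \lambda(v \otimes w)$, where $\sigma^*$ denotes the full (continuous) linear dual of $\sigma$. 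A routine calculation shows that the $H_{\CO_{p_1}}(F)$-invariance of $\lambda$ with respect to the diagonal action translates exactly into the $H_{\CO_{p_1}}(F)$-equivariance of $T_\lambda$, where $H_{\CO_{p_1}}(F)$ acts on $\sigma^*$ by the contragredient action.

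The key step is to verify that the image of $T_\lambda$ lies in the smooth dual $\sigma^\vee \subset \sigma^*$. Here we use the fact that $\CJ_{\CO_{p_1}}(\pi)$ is a smooth representation of $H_{\CO_{p_1}}(F)$ when $F$ is non-archimedean, and carries the structure of a Casselman-Wallach module (or a quotient thereof) when $F$ is archimedean, as is built into its construction in Section \ref{ssec-TJM-p1}. In the non-archimedean case, for every $v \in \CJ_{\CO_{p_1}}(\pi)$, the functional $T_\lambda(v)$ is stabilized by the compact open subgroup fixing $v$, hence is smooth. In the archimedean case, a continuous $H_{\CO_{p_1}}(F)$-equivariant image of a Casselman-Wallach module into $\sigma^*$ lands in the subspace of smooth vectors; since $\sigma$ is irreducible admissible, this subspace is precisely $\sigma^\vee$.

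Conversely, starting from any $T \in \Hom_{H_{\CO_{p_1}}(F)}(\CJ_{\CO_{p_1}}(\pi), \sigma^\vee)$, the composition with the canonical evaluation pairing $\sigma^\vee \wh\otimes \sigma \to \mathbbm{1}$ produces an invariant pairing, and the two assignments $\lambda \mapsto T_\lambda$ and $T \mapsto (\mathrm{ev}) \circ (T \otimes \mathrm{id})$ are mutually inverse. The main (modest) technical subtlety is ensuring in the archimedean case that $\CJ_{\CO_{p_1}}(\pi)$ has the appropriate topological structure so that the universal property of $\wh\otimes$ applies and continuous duals can be identified with smooth duals via the canonical pairing; this is precisely the content of the Casselman-Wallach formalism underlying the local descent theory used throughout the paper, so no new input is required.
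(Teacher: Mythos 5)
Your argument is correct and is essentially the paper's own: the paper disposes of the non-archimedean case as ``clear'' (it is exactly your tensor-hom adjunction together with the observation that $T_\lambda(v)$ is fixed by any compact open subgroup fixing $v$, hence lands in $\sigma^\vee$) and cites \cite[Lemma 2.2.22]{Ch23} for the archimedean case, which is precisely the Casselman--Wallach duality statement you defer to at the end. The one point to watch is that $\CJ_{\CO_{p_1}}(\pi)$ is only known to be a Hausdorff quotient of a Casselman--Wallach module rather than Casselman--Wallach itself (admissibility of the twisted Jacquet modules is not established, as the paper notes), so in the archimedean case the smoothness of the orbit maps should be deduced directly from the quotient presentation rather than from an assumed Casselman--Wallach structure on $\CJ_{\CO_{p_1}}(\pi)$ --- which is exactly what the cited lemma takes care of.
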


\begin{proof}
The lemma is clear when $F$ is non-archimedean. For $F$ archimedean, it follows from \cite[Lemma 2.2.22]{Ch23}.
\end{proof}

By Lemma \ref{frob}, we have that 
\[
\Hom_{H_{\CO_{p_1}}(F)}(\CJ_{\CO_{p_1}}(\pi),\sigma^\vee)\neq 0. 
\]
Hence we have 
\[
\Hom_{G_n(F)}(\pi,\Ind^{G_n(F)}_{R_{\CO_{p_1}}(F)}(\sigma^\vee\otimes\psi_{\CO_{p_1}}))\neq 0. 
\]
We are done.

\section{Proof of Proposition \ref{prop:PI}}\label{sec-PPI}

We are going to complete the proof of Proposition \ref{prop:PI} in this section by establishing a multiplicity formula. For any $\pi\in \Pi_F(G_n)$ and $\sigma\in \Pi_F(H_{\lfloor (\Fn-p_1)/2 \rfloor})$, we define the multiplicity $m(\pi,\sigma)$ or $m(\sigma,\pi)$
the dimension
\begin{equation}\label{equ: def of multiplicity}
\dim\Hom_{H_{\CO_{p_1} }(F)}(\CJ_{\CO_{p_1}}(\pi)\,\wh{\otimes}\,\sigma, \mathbbm{1}).
\end{equation}
In order to prove Proposition \ref{prop:PI}, it suffices to show that when $\pi$ is in a generic $L$-packet and  $\sigma=\RI(\udl{s},\tau_1,\dots\tau_t,\sigma_0)$, where $\tau_i,\sigma_0$ are tempered representations, we have
\begin{equation}\label{equ: multiplicity ineq}
m(\pi,\sigma)\leq m(\pi,\sigma_0).    
\end{equation}

When $F$ is non-archimedean, it was proved for special orthogonal groups in \cite{MW12}, for unitary groups in \cite{GI16}, and for symplectic groups and metaplectic groups in \cite{At18} that for every representation $\pi$ in a generic $L$-packet, we have
\[
\pi=\RI(\udl{s}',\rho_1,\dots\rho_{t'},\pi_0)
\] 
with certain $\udl{s}'\in \BR^{t'}$ and tempered $\rho_i$, $\pi_0$. When $F$ is archimedean, this result was shown in \cite[\S 1.1]{X2}, \cite[\S 4]{Ch21} and \cite[Theorem 5.2.1]{Ch23}. With this classification,  the following proposition implies the inequality of multiplicities in \eqref{equ: multiplicity ineq}.

\begin{prop}\label{pro: multiplicity} 
Let $\udl{s}=(s_1,\cdots,s_t)\in \BR^t$ and $\udl{s}'=(s_1',\cdots,s_{t'}')\in \BR^{t'}$, satisfying 
\[
s_1\geq \cdots \geq s_t\geq 0\quad\text{and}  \quad
s_1'\geq \cdots \geq s_{t'}'\geq 0.
\]
 Suppose that $\rho_i,\tau_i$ are tempered representations of $\GL_{m_i}(E)$, $\GL_{m_i'}(E)$  and $\pi_0,\sigma_0$ are tempered representations of $G_{n},H_{\lfloor (n-p_1)/2 \rfloor}$ respectively. Then for 
 \[
 \pi=\RI(\udl{s},\rho_1,\dots\rho_t,\pi_0)\quad \text{and} \quad \sigma=\RI(\udl{s}',\tau_1,\dots\tau_{t'},\sigma_0),
 \]
 we have that
\begin{equation}\label{equ: multiplicity formula} 
m(\pi,\sigma)=m(\pi_0,\sigma_0).
\end{equation}
\end{prop}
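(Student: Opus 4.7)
The plan is to establish Proposition~\ref{pro: multiplicity} by following the strategy of \cite{MW12}, adapted to the Fourier-Jacobi non-archimedean setting, which is the only remaining case in view of \cite{MW12, GI16, Ch23}. The proof proceeds by a double induction on the pair $(t,t')$, the numbers of tempered $\GL$-factors appearing in the standard modules defining $\pi$ and $\sigma$. The base case $t=t'=0$ is the tautology $m(\pi_0,\sigma_0)=m(\pi_0,\sigma_0)$, and by symmetry one reduces to the inductive step where one peels off a single tempered induction, say from the $\sigma$-side, replacing $\sigma$ by $\sigma_0'=\RI(s_2',\ldots,s_{t'}',\tau_2,\ldots,\tau_{t'},\sigma_0)$ after absorbing $|\det|^{s_1'}\tau_1$.

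The core step is a Mackey-type decomposition of the bilinear form
\[
\Hom_{H_{\CO_{p_1}}(F)}\bigl(\CJ_{\CO_{p_1}}(\pi)\,\wh\otimes\,\sigma,\,\mathbbm{1}\bigr).
\]
Using the geometric lemma applied to the induced representation $\sigma=\Ind_{P(F)}^{H_{\lfloor(\Fn-p_1)/2\rfloor}(F)}(|\det|^{s_1'}\tau_1\otimes\cdots)$, together with the explicit form of the unipotent $U_{X_{e,\varsigma}}$ and the character/oscillator representation supported on it, the restriction of $\sigma$ to $R_{\CO_{p_1}}(F)$ acquires a filtration indexed by the orbits of $H_{\CO_{p_1}}(F)$ on a suitable flag variety associated with $P$. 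This filters the relevant Hom space into subquotients, one per orbit.

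The three ingredients that constitute Proposition~\ref{prop: three ingredients} are then the following. \emph{First}, one identifies a distinguished ``open'' orbit whose associated subquotient, after Frobenius reciprocity, contributes exactly $m(\pi,\sigma_0')$; this uses the compatibility of the twisted Jacquet functor $\CJ_{\CO_{p_1}}$ with parabolic induction on the relevant Levi, together with the explicit mixed model of the Heisenberg-oscillator representation $\omega_{\psi_{X_{e,\varsigma}},\xi}$. \emph{Second}, one shows that the contributions of all non-open orbits vanish. This is where the temperedness of $\rho_i,\tau_j,\pi_0$ and the exponent condition $s_1\ge\cdots\ge s_t\ge 0$, $s_1'\ge\cdots\ge s_{t'}'\ge 0$ are essential: Casselman's criterion, combined with a computation of the exponents arising in the Jacquet modules of $\tau_1$ and of the oscillator representation twisted by $\xi$, rules out any non-zero pairing for those orbits. \emph{Third}, iterating the reduction on both sides converts the multiplicity $m(\pi,\sigma)$ step by step into $m(\pi_0,\sigma_0)$, completing the induction.

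The main obstacle, and the reason the non-archimedean Fourier-Jacobi case is not already covered by \cite{MW12, GI16}, is the presence of the Heisenberg-oscillator representation in place of a Bessel character. Its restrictions to the parabolics appearing in the Mackey decomposition must be analysed in the mixed Schr\"odinger model, with careful bookkeeping of the splitting character $\xi$ and of the metaplectic cocycle. The delicate point is to verify that the exponents of the Weil-representation factor do not conspire with those of $|\det|^{s_i'}\tau_i$ to resurrect a non-vanishing contribution from a non-open orbit; once this is confirmed, the identification of the main-orbit term with $m(\pi_0,\sigma_0')$ proceeds cleanly, and the proof of \eqref{equ: multiplicity formula} follows as indicated.
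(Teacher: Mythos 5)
Your outline captures only half of the paper's argument, and the half it captures is stated too strongly. The fundamental gap is this: a Mackey/orbit filtration of $\Hom_{H_{\CO_{p_1}}(F)}(\CJ_{\CO_{p_1}}(\pi)\,\wh\otimes\,\sigma,\mathbbm{1})$ can only produce \emph{upper} bounds, because $\Hom(-,\mathbbm{1})$ is merely left exact: from the exact sequence attached to the open/closed stratification one gets $m(\pi,\sigma)\leq$ (open-stratum contribution), never the reverse, since the restriction map to the open stratum need not be surjective. So even if your steps (i) and (ii) were carried out, you would have proved only $m(\pi,\sigma)\leq m(\pi_0,\sigma_0)$. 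The paper's proof of the opposite inequality $m(\pi,\sigma)\geq m(\pi_0,\sigma_0)$ (Part (3) of Proposition \ref{prop: three ingredients}) is by a completely different mechanism, following \cite{MW12}: one builds the meromorphic family of integrals $\RI_{v,v^*,v'\otimes w,v'^{,*}\otimes w^*}(\udl{s},\udl{s}')$ of matrix coefficients against the oscillator representation, proves absolute convergence via $\Xi$-function estimates together with the exponential decay of oscillator matrix coefficients on $A^+_{G_{n^+}}$, proves nonvanishing at purely imaginary parameters using the tempered local Gan--Gross--Prasad conjecture and the nonvanishing of tempered intertwinings (Theorem \ref{thm: tempered intertwining nonzero}), and obtains meromorphic continuation via Casselman's canonical pairing (extended to the Weil representation in Lemma \ref{lem: Casselman's}) and Macdonald's formula. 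Taking the leading term at $(\udl{s}_0,\udl{s}'_0)$ then yields a nonzero invariant functional. None of this is present in your proposal, and it cannot be replaced by the orbit analysis.

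A second, smaller but still genuine, problem: the vanishing of the non-open strata is not unconditional. In the paper it rests on Lemma \ref{lem: vanishing}, an exponent comparison via the second adjointness, and it only applies when the exponent being peeled off dominates, i.e.\ $s_1'\geq s_1$. When $s_1'<s_1$ one must switch the roles of $\pi$ and $\sigma$, and to keep the two sides of equal rank and to make the induction on the number $N$ of nonzero exponents close up, one needs Part (1) of Proposition \ref{prop: three ingredients} (insertion of a supercuspidal $\GL$-factor at exponent $0$ without changing the multiplicity, from \cite[Theorem 16.1]{GGP12}). Your double induction on $(t,t')$, peeling always from the $\sigma$-side, breaks precisely at this point; and your base case is not quite the stated tautology either, since the case where all exponents vanish requires decomposing the (possibly reducible) full tempered induced representations and invoking the tempered Gan--Gross--Prasad multiplicity-one statement packet by packet.
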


In the setting of Proposition \ref{pro: multiplicity}, we denote by
\[
n^+=n+\sum_{i=1}^t{m_i},\quad n^{\prime,+}=\lfloor (\Fn-p_1)/2\rfloor+\sum_{i=1}^{t'}m_i'.
\]
When applied to the setting of Proposition \ref{prop:PI}, we always have that 
\[
n^+\neq n^{\prime,+} \text{ in the Fourier-Jacobi cases}. 
\]
It is worth mentioning that Proposition \ref{pro: multiplicity} indeed holds in a more general context, and we implement the definition for $m(\pi,\sigma)$ in the Fourier-Jacobi case with $n^+=n^{\prime,+}$ as following 
\[
m(\pi,\sigma)=\dim \Hom_{G_{n^+}(F)}(\pi \,\wh{\otimes}\, \sigma \,\wh{\otimes}\, \omega_{n^+,\psi_F},\mathbbm{1}).
\]
 Here the representation $\omega_{n^+,\psi_F}$ is the Heisenberg-oscillator representation of 
\begin{equation}\label{equ: define wtG}
\wt{G}^J_{n^+}:=\wt{G}_{n^+}\ltimes \CH_{n^+},\quad \text{ where } \wt{G}_{n^+}:=\begin{cases}
\RU_{\Fn^+}& \text{ when }G_{n^+}=\RU_{\Fn^+} \text{ and }n^+=\lfloor\Fn^+/2\rfloor,\\
 \Mp_{2n^+}& \text{ when }G_{n^+}=\Sp_{2n^+},\Mp_{2n^+}
\end{cases}
\end{equation}
associated to $\psi_F$, where $\CH_{n^+}$ is the Heisenberg group $\Res_{E/F}V\oplus \BG_{a,F}$ for the $\epsilon$-Hermitian space $V$ associated to $G_{n^+}$, and $\pi, \sigma$ in the completed tensor product are inflations of the corresponding representations of $G_{n^+},H_{n^+}$ to $\wt{G}_{n^+}^J$ respectively.

We first point out that Proposition \ref{pro: multiplicity} is known in the following situations. 
When $F$ is non-archimedean, Propostion \ref{pro: multiplicity} follows from \cite[Proposition 1.3]{MW12} when $G_n, H_{\lfloor (\Fn-p_1)/2 \rfloor}$ are special orthogonal groups, and it follows from \cite[Proposition 9.4]{GI16} when $G_n, H_{\lfloor (\Fn-p_1)/2 \rfloor}$ are unitary groups. When $F$ is archimedean, Proposition \ref{pro: multiplicity} follows from \cite[Theorem A]{Ch23}. 
Hence, it remains to prove Proposition \ref{pro: multiplicity} in
the Fourier-Jacobi cases when $F$ is non-archimedean. We present a proof of this situation by following \cite{MW12} and using the tempered local Gan-Gross-Prasad conjecture as proved in \cite{GI16, At18}.

 In the setting of Proposition \ref{pro: multiplicity}, we call the inequality
\begin{equation}\label{equ: first inequality}
m(\pi,\sigma)\leq m(\pi_0,\sigma_0)
\end{equation}
``the first inequality" and call the inequality 
\begin{equation}\label{equ: second inequality}
m(\pi,\sigma)\geq m(\pi_0,\sigma_0)
\end{equation}
``the second inequality".

Following the framework of M\oe glin and Waldspurger in \cite{MW12}, there are three key steps in the proof of the inequalities: reduction to basic cases, basic forms of the first inequality, and basic forms of the second inequality.

In the setting of non-archimedean Fourier-Jacobi cases, we state them in the following proposition. 

\begin{prop}\label{prop: three ingredients}
In the setting of Proposition \ref{pro: multiplicity}, we denote by
\[
\sigma_{t'-1}=\RI((s_2',\cdots,s_{t'}'),\tau_2,\dots,\tau_{t'},\sigma_0).
\]
Then we have the following results.
\begin{enumerate}
    \item {\bf (Reduction to basic cases)} When 
$n^+=n^{\prime,+}$ and $\tau_1$ is a supercuspidal representation, then
\[
m(\pi,\sigma)=m(\pi,\sigma_{t'-1})
\]
if  $\pi^{\vee}$ does not belong to the Bernstein component associated to $|\det|^{s_1'}\tau_1 \otimes \sigma'$, where $\sigma'$ is
any supercuspidal representation of a Levi subgroup of $H_{n^{\prime,+}-m_1'}$.
    \item {\bf (Basic forms of the first inequality)} When $n^+=n^{\prime,+}$ and $s_1'\geq s_1$,
\[
m(\pi,\sigma)\leq m(\pi,\sigma_{t'-1}).
\]
    \item {\bf (Basic forms of the second inequality)} When $n^+=n^{\prime,+}$,
\[
m(\pi,\sigma)\geq m(\pi_0,\sigma_0).
\]
\end{enumerate}    
\end{prop}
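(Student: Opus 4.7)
\textbf{Proof proposal for Proposition \ref{prop: three ingredients}.}

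The plan is to adapt the framework of M\oe glin--Waldspurger \cite{MW12} to the non-archimedean Fourier-Jacobi setting, with the tempered local Gan-Gross-Prasad conjecture in this setting (\cite{GI16,At18}) serving as the key additional input for (3). Write $\sigma_{t'-1} = \RI((s_2',\ldots,s_{t'}'),\tau_2,\ldots,\tau_{t'},\sigma_0)$, so that $\sigma$ is the Langlands quotient of a standard module parabolically induced from $|\det|^{s_1'}\tau_1 \boxtimes \sigma_{t'-1}$. I would realize the multiplicity $m(\pi,\sigma)$ via the mixed Fourier-Jacobi model with the Heisenberg-oscillator representation $\omega_{n^+,\psi_F}$ attached to $\wt{G}^J_{n^+}$ as in \eqref{equ: define wtG}, and then apply a Bruhat-type (Bernstein--Zelevinsky geometric lemma) filtration of the restriction of $|\det|^{s_1'}\tau_1 \boxtimes \sigma_{t'-1}$ to the subgroup inside $H_{n^{\prime,+}}(F)$ that governs the pairing with the Fourier-Jacobi model of $\pi$.

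For parts (1) and (2), this filtration has successive quotients that are parabolically induced from representations of strictly smaller classical and general linear groups, together with a distinguished ``open'' stratum whose contribution to the Hom space is exactly $m(\pi,\sigma_{t'-1})$. In case (1), the supercuspidality of $\tau_1$ forces every non-open stratum to factor through a functional on $\pi^\vee$ whose inertial support contains $|\det|^{s_1'}\tau_1$; the hypothesis on the Bernstein component of $\pi^\vee$ then kills these strata entirely, which yields the desired equality. In case (2), the positivity $s_1' \geq s_1 \geq 0$ combined with the Langlands-quotient structure of $\pi$ (so that the real parts of the exponents of the supercuspidal support of $\pi$ are bounded above by $s_1$) again forces each non-open stratum to vanish on $\pi$, producing the inequality.

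For part (3), the plan is to start with a nonzero element of $\Hom_{\wt{G}^J_n(F)}(\pi_0 \,\wh\otimes\, \sigma_0 \,\wh\otimes\, \omega_{n,\psi_F}, \mathbbm{1})$, whose existence is provided by the tempered local Gan-Gross-Prasad conjecture in \cite{GI16, At18} whenever $m(\pi_0,\sigma_0)>0$, and to extend it to a nonzero element of $\Hom_{\wt{G}^J_{n^+}(F)}(\pi \,\wh\otimes\, \sigma \,\wh\otimes\, \omega_{n^+,\psi_F}, \mathbbm{1})$ by means of an explicit pairing built from matrix-coefficient-type integrals. These integrals converge absolutely for generic $\underline{s},\underline{s}'$, and the needed holomorphic continuation down to the values in question is handled by composing with normalized standard intertwining operators; nonvanishing at the target point is verified by a leading-term analysis, using that both $\pi$ and $\sigma$ are Langlands quotients so that the functional descends from the standard module to the quotient.

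The main technical obstacle will be the systematic handling of the Heisenberg-oscillator representation $\omega_{n^+,\psi_F}$ through the filtration analysis in (1) and (2). Unlike the Bessel (orthogonal) setting of \cite{MW12}, the Jacquet modules of the Weil representation along the relevant parabolic subgroups decompose as tensor products of smaller Weil representations with degenerate principal-series-like pieces, and one must keep track of these tensor factors using the Kudla splittings in order to identify each filtration stratum as a Fourier-Jacobi Hom space for smaller groups. A secondary subtlety is that the equal-rank condition $n^+ = n^{\prime,+}$ enforced in all three statements is the boundary case of the mixed model, where the Bruhat stratification has fewer strata than in the general case and the open stratum must be matched up against $\sigma_{t'-1}$ with an appropriate shift; the bookkeeping here is the most delicate input, and is what makes these three statements the genuinely non-trivial core of the inequality \eqref{equ: multiplicity ineq}.
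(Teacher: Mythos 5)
Your overall architecture coincides with the paper's: parts (1)--(2) are handled by a Mackey/Bruhat filtration of the compactly induced realization of $\sigma\,\wh\otimes\,\omega_{n^+,\psi_F}$ (in the paper there are exactly two double cosets, an open and a closed one, and the open stratum is further filtered by Bernstein--Zelevinsky derivatives of $\tau_1$), with all non-distinguished strata killed by an exponent-comparison vanishing lemma when $s_1'\geq s_1$, and the surviving stratum identified with $m(\pi,\sigma_{t'-1})$; part (3) is the M\oe glin--Waldspurger matrix-coefficient integral $\RI(\udl{s},\udl{s}')$, nonvanishing at a purely imaginary point via the tempered Gan--Gross--Prasad conjecture, meromorphic continuation, and extraction of the leading Laurent term. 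One cosmetic difference: the paper does not prove part (1) at all but cites \cite[Theorem 16.1]{GGP12}, so your sketch of (1) is extra work; and for the continuation in (3) the paper uses Casselman's canonical pairing together with Macdonald's formula for $\mathrm{meas}(K\varpi^{\udl{n}}K)$ and an explicit geometric-series summation, rather than normalized intertwining operators -- both are viable, but the Casselman route is what requires the paper's Lemma on Jacquet modules of the Weil representation.

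Two points in your sketch of (3) deserve more care. First, knowing $\Hom(\pi_0\,\wh\otimes\,\sigma_0\,\wh\otimes\,\omega,\mathbbm{1})\neq 0$ abstractly does not by itself show that the specific integral family is not identically zero: one needs the statement that for tempered representations the explicit intertwining $\CL_{\pi_V\wh\otimes\pi_W}$ given by the convergent integral is nonzero whenever the multiplicity is nonzero. This is a genuine theorem (the Fourier--Jacobi analogue of Waldspurger's \cite[Prop.~5.7]{W12a}, proved in the paper by adapting Xue's archimedean argument), and it is the crux of the nonvanishing step; your proposal elides it. Second, your remark that ``the functional descends from the standard module to the quotient'' is both unnecessary and misleading: Proposition \ref{pro: multiplicity} is stated for the full induced modules $\RI(\udl{s},\dots)$, not their Langlands quotients, so the leading term of the meromorphic family already lands in the correct Hom space; had descent to the quotient actually been required, it would not follow formally and would be a real gap.
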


Since Part (1) was proved in \cite[Theorem 16.1]{GGP12}, we prove parts (2) and (3) in the section. We will apply Parts (1) and (2) to prove the first inequality of Proposition \ref{pro: multiplicity} and apply Parts (1) and (3) to prove the second inequality of Proposition \ref{pro: multiplicity}.

\subsection{The first inequality}
In this section, we aim to prove the first inequality \eqref{equ: first inequality} for Fourier-Jacobi models over non-archimedean local fields. 
\subsubsection{Basic forms of the first inequality}
First, we prove Part (2) of Proposition \ref{prop: three ingredients} following \cite[\S 1.4]{MW12}. See \cite[\S 4.1]{Ch23} for the archimedean counterpart. 
For this purpose, we study $m(\pi,\sigma)$ with $n^+=n^{\prime,+}$. Recall that in this situation,
\[
m(\pi,\sigma)=\dim \Hom_{G_{n^+}(F)}(\pi \otimes \sigma \otimes \omega_{n^+,\psi_F}, \mathbbm{1}).
\]

Since $n^+=n^{\prime,+}$, $G_{n^+}$ and $H_{n^{\prime,+}}$ are defined as the isometric groups on the same $\epsilon$-hermitian space $V$. We fix an $m_1'$-dimensional totally isotropic space $X_{m_1'}$ over $E$.  We denote by $P_{X_{m_1'}}$, $P'_{X_{m_1'}}$ and $\wt{P}_{X_{m_1'}}$ the  respective parabolic subgroups of $G_{n^+}$, $H_{n^{\prime,+}}$ and $\wt{G}_{n^+}$ stabilizing $X_{m_1'}'$. The Levi decomposition gives $P_{X_{m_1'}}=L_{X_{m_1'}}\ltimes N_{X_{m_1'}}$ with $L_{X_{m_1'}}=\Res_{E/F}\GL_{m_1'}\times G_{n^+-m_1'}$.
For $i=1,\cdots,m_1'$, we let $\GL^{(i)}_{m_1'}$ be the semidirect product of $\Res_{E/F}\GL(X_{m_1'-i})$ and
the unipotent part $N^{(i)}_{m_1'}$ of the parabolic subgroup of $\Res_{E/F}\GL(X_{m_1'})$ stabilizing $X_{m_1'-i}\subset X_{m_1'}$ and a full flag of $X_{m_1'}/X_{m_1'-i}$.

We let 
\[
P_{X_{m_1'}}^{(i)}=(\GL_{m_1'}^{(i)}\times G_{n^+-m_1'})\ltimes N_{X_{m_1'}}^{(i)}\subset P_{X_{m_1'}}.
\]
By definition, we have 
\[ 
\sigma =\RI(\udl{s}',\tau_1,\dots,\tau_{t'},\sigma_0)=\ind_{P_{X_{m_1'}}'(F)}^{H_{n^{\prime,+}}(F)}(\delta_{P_{X_{m_1'}}'}^{1/2}|\det|^{s_1'}\tau_1 \otimes \sigma_{t-1}).
\]
Here and thereafter, we denote by $\ind_{H}^{G}$ the (unnormalized) compact induction from $H$ to $G$. 
From the mixed model of the Weil representation, we have
\[\omega_{n^+,\psi_F}=\ind^{\wt{G}_{n^+}\ltimes \CH_{n^+}(F)}_{\wt{P}_{X_{m_1'}}\ltimes \CH(X_{m_1'}^{\perp})(F)}(|\det|^{1/2} \otimes \omega_{n^+-m_1',\psi_F}).
\]
Following a similar computation as in \cite[p3336]{LS13}, we have that 
\[
\sigma \otimes \omega_{n^{\prime,+},\psi_F}=\ind^{G_{n^+}\ltimes \CH_{n^+}(F)}_{P_{X_{m_1'}}\ltimes \CH(X_{m_1'}^{\perp})(F)}(\delta_{P_{X_{m_1'}}\ltimes \CH(X_{m_1'}^{\perp})}^{1/2}|\det|^{s_1'}\tau_1 \otimes (\sigma_{t-1} \otimes\omega_{n^+-m_1',\psi_F})).
\]
Here $X_{m_1'}^{\perp}$ is the complement of $X_{m_1'}$ in $V$ with respect to the $\epsilon$-hermitian form, $\CH(X_{m_1'}^{\perp})$ is the subgroup $\Res_{E/F}X_{m_1'}^{\perp}\oplus \BG_{a,F}$ of the Heisenberg group $\CH_{n^{+}}$.
We deduce that 
\[
\begin{aligned}
    m(\pi,\sigma)&=\dim \Hom_{G_{n^+}(F)}(\pi \otimes \ind^{G_{n^+}\ltimes \CH_{n^+}(F)}_{P_{X_{m_1'}}\ltimes \CH(X_{m_1'}^{\perp})(F)}(\delta_{P_{X_{m_1'}}\ltimes \CH(X_{m_1'}^{\perp})}^{1/2}|\det|^{s_1'}\tau_1 \otimes (\sigma_{t-1} \otimes \omega_{n^+-m_1',\psi_F})), \mathbbm{1}). 
\end{aligned}
\]
Using Mackey's theory, we will study the multiplicity based on the structure of the double cosets
 \[
P_{X_{m_1'}}\ltimes \CH(X_{m_1'}^{\perp})(F)\bs G_{n^+}\ltimes \CH_{n^+}(F)/ G_{n^+}(F).
 \]
 
From the computation in \cite[\S 6]{GRS11}
, we have the following lemma on the structure of the above double cosets.
\begin{lem}\label{lem: double cosets}
With notations as given above, the following hold. 
\begin{enumerate}
    \item  The set 
    \[
    P_{X_{m_1'}}\ltimes \CH(X_{m_1'}^{\perp})(F)\bs G_{n^+}\ltimes \CH_{n^+}(F)/ G_{n^+}(F)
    \]
    contains an open double coset $P_{X_{m_1'}}\ltimes \CH(X_{m_1'}^{\perp})(F)\gamma_{\mathrm{open}} G_{n^+}(F)$ and a closed double coset $P_{X_{m_1'}}\ltimes \CH(X_{m_1'}^{\perp})(F)\gamma_{\mathrm{closed}} G_{n^+}(F)$.
    \item One has that 
    \[
\gamma_{\mathrm{open}}^{-1}P_{X_{m_1'}}\ltimes \CH(X_{m_1'}^{\perp})\gamma_{\mathrm{open}}\cap G_{n^+}=P_{X_{m_1'}}^{(1)},
    \]
\item One has that 
\[
\gamma_{\mathrm{closed}}^{-1}P_{X_{m_1'}}\ltimes \CH(X_{m_1'}^{\perp})\gamma_{\mathrm{closed}}\cap G_{n^+}=P_{X_{m_1'}}.
\]
\end{enumerate}
  \end{lem}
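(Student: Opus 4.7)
My plan is to reduce the double coset space to the orbit space of $P_{X_{m_1'}}$ acting on the affine space $X_{m_1'}^*$, where $X_{m_1'}^*$ denotes an isotropic complement to $X_{m_1'}$ with respect to a chosen polar decomposition $V=X_{m_1'}\oplus V_0\oplus X_{m_1'}^*$. The map $(g,h)\mapsto ghg^{-1}$ gives a natural bijection $(G_{n^+}\ltimes\CH_{n^+})/G_{n^+}\cong \CH_{n^+}$ coming from the unique factorization $(g,h)=(1,ghg^{-1})(g,1)$, and one checks that this identification intertwines the left multiplication of $P_{X_{m_1'}}\ltimes\CH(X_{m_1'}^\perp)$ with the action $(p,h')\cdot x=(ph'p^{-1})\cdot(pxp^{-1})$ on $\CH_{n^+}$. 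Since the center of $\CH_{n^+}$ lies in $\CH(X_{m_1'}^\perp)$, further quotienting by $\CH(X_{m_1'}^\perp)$ on the left identifies $\CH(X_{m_1'}^\perp)\backslash\CH_{n^+}$ with the abelian quotient $\Res_{E/F}(V/X_{m_1'}^\perp)\cong \Res_{E/F}X_{m_1'}^*$, and the residual action of $P_{X_{m_1'}}$ is induced from its action on $V$.

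Next I would determine this orbit space. The unipotent radical $N_{X_{m_1'}}$ and the Levi factor $G_{n^+-m_1'}$ both preserve $X_{m_1'}^\perp$ and act trivially on $V/X_{m_1'}^\perp$, so the induced action factors through the contragredient action of $\Res_{E/F}\GL(X_{m_1'})$ on $X_{m_1'}^*$. This action has exactly two orbits, namely the closed point $\{0\}$ and its open complement $X_{m_1'}^*\setminus\{0\}$, which furnishes part (1); one may take $\gamma_{\mathrm{closed}}=1$ and $\gamma_{\mathrm{open}}=(1,h_0)$, where $h_0$ is any Heisenberg lift of a chosen nonzero vector $v_0\in X_{m_1'}^*$.

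For the stabilizers, part (3) is immediate: since $\CH(X_{m_1'}^\perp)\cap G_{n^+}=\{1\}$ inside $G_{n^+}\ltimes\CH_{n^+}$, conjugation by $\gamma_{\mathrm{closed}}=1$ gives $(P_{X_{m_1'}}\ltimes\CH(X_{m_1'}^\perp))\cap G_{n^+}=P_{X_{m_1'}}$. For part (2), I would conjugate an arbitrary $(p,h')\in P_{X_{m_1'}}\ltimes\CH(X_{m_1'}^\perp)$ by $\gamma_{\mathrm{open}}^{-1}$; a short semidirect-product computation shows that the result lies in $G_{n^+}$ precisely when $h'=(p\cdot h_0)\cdot h_0^{-1}$, and the requirement $h'\in\CH(X_{m_1'}^\perp)$ then becomes $p\cdot v_0\equiv v_0\pmod{X_{m_1'}^\perp}$. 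Writing $p=lu$ with Levi part $l=(g_{\GL},g_0)$ and using that $N_{X_{m_1'}}$ acts trivially modulo $X_{m_1'}^\perp$, this cuts out exactly the mirabolic condition $g_{\GL}^*v_0=v_0$ on the $\GL(X_{m_1'})$-factor, with no constraint on $g_0\in G_{n^+-m_1'}$ or on the unipotent part. Matching this against the definition gives $P_{X_{m_1'}}^{(1)}$.

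The main technical obstacle is purely bookkeeping: setting up the intertwining of the two left actions under $(g,h)\mapsto ghg^{-1}$ with fixed semidirect-product conventions, and verifying that the Heisenberg center is absorbed by $\CH(X_{m_1'}^\perp)$ so that no extra orbit information remains in the passage from $\CH_{n^+}$ to $X_{m_1'}^*$. Once those identifications are clean, everything reduces to the elementary orbit geometry of $\GL(X_{m_1'})$ acting on an $E$-vector space and to recognizing the stabilizer of a nonzero covector as the mirabolic group $\GL_{m_1'}^{(1)}$.
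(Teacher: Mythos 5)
Your argument is correct, but it is worth noting that the paper does not actually prove this lemma: it is quoted from the computation in \cite[\S 6]{GRS11}, so your write-up is a self-contained reconstruction of that cited argument rather than a parallel to a proof in the text. The route you take --- identifying $(G_{n^+}\ltimes \CH_{n^+})/G_{n^+}$ with $\CH_{n^+}$, absorbing $\CH(X_{m_1'}^{\perp})$ (which contains the Heisenberg center) to land on $V/X_{m_1'}^{\perp}\cong X_{m_1'}^{*}$, and then reading off the two $\GL(X_{m_1'})$-orbits $\{0\}$ and its complement --- is exactly the mechanism behind the GRS computation, and your stabilizer calculations for $\gamma_{\mathrm{closed}}=1$ and $\gamma_{\mathrm{open}}=(1,h_0)$ are sound (the appearance of $p$ versus $p^{-1}$ in the condition $h'={}^{p^{\pm1}}(h_0)h_0^{-1}$ is a semidirect-product convention and is harmless since the stabilizer condition $p\cdot v_0\equiv v_0 \pmod{X_{m_1'}^{\perp}}$ is the same either way). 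Two small points are worth making explicit. First, your argument actually proves more than the lemma states, namely that these are the \emph{only} two double cosets; this stronger fact is what the paper silently uses to get the short exact sequence in \eqref{equ: exact 1} with $\CZ=\CX-\CU$, so it is a feature, not a digression. Second, to obtain the literal equality with $P_{X_{m_1'}}^{(1)}$ in part (2), rather than a Levi-conjugate of it, the nonzero covector $v_0$ must be chosen adapted to the fixed flag defining $\GL_{m_1'}^{(1)}$ (i.e., with $\ker v_0 = X_{m_1'-1}$); you should say so when you fix $h_0$.
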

We define that $\sigma^+:=\delta_{P_{X_{m_1'}}\ltimes X_{m_1'}^{\perp}}^{1/2}|\det|^{s_1'}\tau_1 \otimes (\sigma_{t-1} \otimes \omega_{n^+-m_1',\psi_F})$.
Then the left-hand side of the inequality can be expressed as
\begin{equation}\label{equ: multiplicity big}
m(\pi,\sigma)=\dim \Hom_{G_{n^+}(F)}(\pi \otimes \ind_{P_{X_{m_1'}}\ltimes \CH(X_{m_1'}^{\perp})(F)}^{G_{n^+}\ltimes \CH_{n^+}(F)}(\sigma^+),\mathbbm{1}).
\end{equation}
By definition, we have that 
\[
\ind_{P_{X_{m_1'}}\ltimes \CH(X_{m_1'}^{\perp})(F)}^{G_{n^+}\ltimes \CH_{n^+}(F)}(\sigma^+)=\Gamma^{\CC}(P_{X_{m_1'}}\ltimes \CH(X_{m_1'}^{\perp})(F)\bs G_{n^+}\ltimes \CH_{n^+}(F),\CE_{\sigma^+}),
\]
that is, the space of compact-supported sections on the bundle $\CE_{\sigma^+}$, where 
\[
\CE_{\sigma^+} := P_{X_{m_1'}}\ltimes \CH(X_{m_1'}^{\perp})(F)\bs ( (G_{n^+}\ltimes \CH_{n^+}(F))\times \sigma^+).
\]
Here the left $P_{X_{m_1'}}\ltimes \CH(X_{m_1'}^{\perp})(F)$-action on $(G_{n^+}\ltimes \CH_{n^+}(F))\times \sigma^+$ is given by $p.(g,v)=(pg,\sigma^+(p)v)$. We set 
\[
\begin{aligned}
    & \CX=P_{X_{m_1'}}\ltimes \CH(X_{m_1'}^{\perp})(F)\bs G_{n^+}\ltimes \CH_{n^+}(F),\\
& \CU=P_{X_{m_1'}}\ltimes \CH(X_{m_1'}^{\perp})(F)\bs P_{X_{m_1'}}\ltimes \CH(X_{m_1'}^{\perp})(F)\gamma_{\mathrm{open}}G_{n^+}(F), \\
& \CZ=\CX-\CU=P_{X_{m_1'}}\ltimes \CH(X_{m_1'}^{\perp})(F)\bs P_{X_{m_1'}}\ltimes \CH(X_{m_1'}^{\perp})(F)\gamma_{\mathrm{closed}}G_{n^+}(F)
\end{aligned}
\]
and obtain an exact sequence
\[
0\to \Gamma^{\CC}(\CU,\CE_{\sigma^+})\to \Gamma^{\CC}(\CX,\CE_{\sigma^+})\to \Gamma^{\CC}(\CZ,\CE_{\sigma^+})\to 0,
\]
which implies an exact sequence 
\begin{equation}\label{equ: exact 1}
\begin{aligned}
0 & \to \Hom_{G_{n^+}(F)}(\pi \otimes \Gamma^{\CC}(\CZ,\CE_{\sigma^+}), \mathbbm{1})  \to \Hom_{G_{n^+}(F)}(\pi \otimes \Gamma^{\CC}(\CX,\CE_{\sigma^+}), \mathbbm{1}) \\
& \to\Hom_{G_{n^+}(F)}(\pi \otimes \Gamma^{\CC}(\CU,\CE_{\sigma^+}), \mathbbm{1}).
\end{aligned}
\end{equation}
Moreover, from Lemma \ref{lem: double cosets}, we have
\begin{equation}\label{equ: open double coset}
\Gamma^{\CC}(\CU,\CE_{\sigma^+})=\ind_{P_{X_{m_1'}}^{(1)}}^{G_{n^+}}(\sigma^+|_{P_{X_{m_1'}}^{(1)}})=\ind_{P_{X_{m_1'}}^{(1)}}^{G_{n^+}}(|\det|^{s_1'+s_0}\tau_1|_{\GL_{m_1'}^{(1)}} \otimes \sigma_{t'-1})
\end{equation}
Here $s_0$ is the constant satisfying $\delta_{P_{X_{m_1'}}\ltimes \CH(X_{m_1'}^{\perp})}^{1/2}=|\det|^{s_0}$. 

For $1\leq i\leq m_1'$, we set $\mu_{i}$ be the character on the unipotent radical of $\GL_{X_{m_1'}}^{(i)}(F)$ and  $P_{X_{m_1'}}^{(i)}(F)$ obtain from the restriction of $\psi_{X_{m_1'}}$defined in (\ref{psix}). The right side of the inequality in Proposition \ref{prop: three ingredients}(2) is
\[
\begin{aligned}
m(\pi,\sigma_{t'-1})=\dim \Hom_{H_{\CO_{m_1'}}(F)}(\CJ_{\CO_{m_1'}}(\pi) \otimes\sigma_{t'-1}, \mathbbm{1})
\end{aligned}
\]
where the $\Hom$-space on the right-hand side is equal to 
\[
\Hom_{G_{n^+-m_1'}(F)}(\delta_{P_{X_{m_1'}}^{(m_1')}}\otimes V_{\pi}/\Span\{\pi(u)v-\mu_{m_1'}(u)v\mid u\in N_{X_{m_1'}}^{(m_1')}(F),v\in V_{\pi}\}\otimes \sigma_{t'-1} \otimes \omega_{n^+-m_1',\psi_F}, \mathbbm{1}),
\]
which can be written as 
\[
\Hom_{P_{X_{m_1'}}^{(m_1')}(F)}(\delta_{P_{X_{m_1'}}^{(m_1')}}\otimes\pi \otimes \sigma_{t'-1} \otimes \omega_{n^+-m_1',\psi_F},\mu_{m_1'}).
\]
Hence we obtain that 
\begin{align}\label{equ: rhs expansion}
m(\pi,\sigma_{t'-1})=\dim \Hom_{G_{n^+}(F)}(\pi \otimes \Gamma^{\CC}(\CU,\CE_{\sigma^+})^{(m_1')},\mathbbm{1}),
\end{align}
where
\[
\Gamma^{\CC}(\CU,\CE_{\sigma^+})^{(m_1')}=\ind_{P_{X_{m_1'}}^{(m_1')}(F)}^{G_{n^+}(F)}(\mu_{m_1'}^{-1}\otimes \sigma_{t'-1} \otimes \omega_{n^+,\psi_F}|_{G_{n-m_1'}}).
\]
We set
\[\Gamma_{\mathrm{open}}=\Gamma^{\CC}(\CU,\CE_{\sigma^+})/
\Gamma^{\CC}(\CU,\CE_{\sigma^+})^{(m_1')}.\]
Then by \eqref{equ: open double coset}, we have that 
\begin{equation}
\Gamma_{\mathrm{open}}=\ind_{P_{X_{m_1'}}^{(1)}(F)}^{G_{n^+}(F)}((|\det|^{s_1'+s_0}\tau_1|_{\GL_{m_1'}^{(1)}}/\ind_{\GL_{m_1'}^{(m_1')}}^{\GL_{m_1'}^{(1)}}(\mu_{m_1'}^{-1}))\otimes\sigma_{t'-1}).
\end{equation}
Therefore, we obtain an exact sequence 
\begin{equation}\label{equ: open contribution}
\begin{aligned}
    0  \to \Hom_{G_{n^+}(F)}(\pi \otimes \Gamma_{\mathrm{open}},\mathbbm{1})&\to \Hom_{G_{n^+}(F)}(\pi \otimes\Gamma^{\CC}(\CU,\CE_{\sigma^+}),\mathbbm{1}) \\
    & \to\Hom_{G_{n^+}(F)}(\pi \otimes \Gamma^{\CC}(\CU,\CE_{\sigma^+})^{(m_1')},\mathbbm{1}).
    \end{aligned}
\end{equation}

The structure of $|\det|^{s_1'}\tau_1|_{\GL_{m_1'}^{(1)}}/\ind_{\GL_{m_1'}^{(m_1')}(F)}^{\GL_{m_1'}^{(1)}(F)}(\mu_{m_1'}^{-1})$ can be computed with the derivative theory in \cite{BZ77}. I n particular, we recall the following result from  \cite[\S 4.3]{BZ77}.
\begin{lem}
For an irreducible admissible representation $\tau$ of $\GL_n(F)$, there is a filtration
\[
1=\tau^{0}\subset \tau^{(1)}\subset \cdots \subset \tau^{(n-1)}=\tau|_{\GL_{n}^{(1)}(F)}
\]
such that
\[
\tau^{(i+1)}/\tau^{(i)}\cong \ind_{\GL_n^{(k)}(F)}^{\GL_n^{(1)}(F)}(\Delta^k\tau\otimes \mu_k^{-1})
\]
where $\Delta^k$ denotes the $k$-th Bernstein-Zelevinsky derivative. 
\end{lem}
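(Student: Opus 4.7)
The plan is to reproduce Bernstein and Zelevinsky's classical construction from \cite[\S 3--4]{BZ77}. Write $P := \GL_n^{(1)}$ for the mirabolic subgroup of $\GL_n$ and $U$ for its unipotent radical; fix a generic character $\theta$ of $U(F)$ compatible with $\mu_1$. The starting point is the fundamental short exact sequence of $P(F)$-modules attached to the four BZ functors: for any smooth representation $\tau$ of $\GL_n(F)$,
\[
0 \to \Phi^+\Phi^-\tau \to \tau|_{P(F)} \to \Psi^+\Psi^-\tau \to 0,
\]
where $\Psi^-$ takes $U$-coinvariants (landing in smooth $\GL_{n-1}(F)$-modules), $\Phi^-$ takes $(U,\theta)$-coinvariants (landing in smooth $\GL_{n-1}^{(1)}(F)$-modules), and $\Psi^+$, $\Phi^+$ are their respective left adjoints, namely extension by the trivial character and compactly supported extension twisted by $\theta$.

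Next, I would iterate this exact sequence. Applying the same construction to $\Phi^-\tau$, regarded as a smooth representation of $\GL_{n-1}^{(1)}(F)$, and repeating, an induction on $n$ produces a filtration $0 = \tau^{(0)} \subset \tau^{(1)} \subset \cdots \subset \tau^{(n-1)} = \tau|_{P(F)}$ whose $k$-th successive quotient is
\[
\tau^{(k)}/\tau^{(k-1)} \;\cong\; (\Phi^+)^{k-1}\Psi^+(\Delta^k\tau),
\]
where by definition $\Delta^k\tau := \Psi^-(\Phi^-)^{k-1}\tau$ is the $k$-th Bernstein-Zelevinsky derivative, a smooth representation of $\GL_{n-k}(F)$. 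The existence of this filtration is a formal consequence of splicing the basic short exact sequence together along the chain $P \supset \GL_n^{(2)} \supset \cdots \supset \GL_n^{(n-1)}$ of successive mirabolic layers.

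The last step is to rewrite the composed functor $(\Phi^+)^{k-1}\Psi^+$ as a single compact induction. Unwinding the definition of $\Phi^+$ as compact induction through one additional unipotent layer twisted by the corresponding component of the generic character, and of $\Psi^+$ as extension by the trivial character, the composition carries a smooth representation $\rho$ of $\GL_{n-k}(F)$ to
\[
(\Phi^+)^{k-1}\Psi^+(\rho) \;\cong\; \ind_{\GL_n^{(k)}(F)}^{\GL_n^{(1)}(F)}\bigl(\rho \otimes \mu_k^{-1}\bigr),
\]
after assembling the cumulative twists by the characters of the successive unipotent steps into $\mu_k$ and checking that the modulus characters coming from each normalised induction cancel against those in the definition of the derivative. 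Substituting $\rho = \Delta^k\tau$ yields the claimed description of $\tau^{(k)}/\tau^{(k-1)}$. The main technical burden in the argument is precisely this bookkeeping of modulus characters and of the precise form of $\mu_k$ through $k-1$ iterations; once the normalisations of \cite[\S 3.2]{BZ77} are aligned, the rest is the formal iteration described above.
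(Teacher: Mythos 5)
Your proposal is correct and is essentially the paper's own argument: the paper gives no proof beyond citing \cite[\S 4.3]{BZ77}, and what you reconstruct — the four functors $\Phi^\pm,\Psi^\pm$, the basic exact sequence $0\to\Phi^+\Phi^-\tau\to\tau|_{P(F)}\to\Psi^+\Psi^-\tau\to 0$, its iteration to get the filtration with graded pieces $(\Phi^+)^{k-1}\Psi^+(\Delta^k\tau)$, and the unwinding of $(\Phi^+)^{k-1}\Psi^+$ as compact induction from $\GL_n^{(k)}(F)$ twisted by $\mu_k^{-1}$ — is precisely the Bernstein--Zelevinsky construction being invoked. The remaining bookkeeping of modulus characters that you flag is exactly the content of the normalizations in \cite{BZ77}, so nothing substantive is missing.
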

Therefore, we obtain a filtration in 
$\sigma_{\mathrm{open}}$ with graded pieces
\begin{equation}\label{equ: graded pieces}
    \ind_{P_{X_{m_1'}}^{(i)}(F)}^{G_{n^+}(F)}(|\det|^{s_1'+s_0}\Delta^k\tau_1 \otimes \sigma_{t-1}\otimes\mu_k^{-1}),\quad k=1,\cdots,m_1'.
\end{equation}

\begin{lem}[Vanishing results]\label{lem: vanishing}
Let $\sigma\in\Pi_F(\Res_{E/F}\GL_{r})$ be a tempered representation and $\pi_1$ be a smooth representation of $G_{n^+-r}(F)$. Let $\pi_2=\RI(\udl{s},\tau_1,\cdots,\tau_t,\pi_0)$ be a representation of $G_{n^+}(F)$ where $\udl{s}=(s_1,\cdots,s_t)$ for $s_1\geq s_2\geq \cdots \geq s_t\geq 0$. Then
 \[
\Hom_{G_{n^+}(F)}(\RI(s',\sigma, \pi_1) \otimes \pi_2, \mathbbm{1})=0
\]
when $s'>s_1$.
\end{lem}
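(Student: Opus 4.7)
The plan is to prove this vanishing lemma by combining Frobenius reciprocity with the geometric lemma and Casselman's temperedness criterion, reducing the question to an exponent matching condition on a split torus. Let $P=MN\subset G_{n^+}$ denote the standard parabolic subgroup whose Levi factor is $M=\Res_{E/F}\GL_r\times G_{n^+-r}$, so that
$$\RI(s',\sigma,\pi_1)=\Ind_{P(F)}^{G_{n^+}(F)}\bigl(\delta_P^{1/2}|\det|^{s'}\sigma\otimes \pi_1\bigr).$$
First I would apply the second adjointness theorem (in the Fourier-Jacobi setting, after absorbing $\omega_{n^+,\psi_F}$ into the appropriate side) to identify the Hom space in the lemma with a Hom space on $M(F)$ whose right-hand side involves the normalized Jacquet module of $\pi_2$ along the opposite parabolic $\bar{P}$.

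Next I would apply the geometric lemma of Bernstein-Zelevinsky, together with its extension to classical groups, to $\pi_2=\RI(\underline{s},\tau_1,\dots,\tau_t,\pi_0)$. This produces a filtration on the Jacquet module of $\pi_2$ along $\bar{P}$ whose graded pieces are parabolically induced from tensor products built out of Bernstein-Zelevinsky derivatives (and duals) of $|\det|^{s_i}\tau_i$, together with pieces of Jacquet modules of the tempered representation $\pi_0$. The restriction of $\omega_{n^+,\psi_F}$ to the Siegel-type subgroup contributes at most a bounded character shift of the form $|\det|^{1/2}$, which does not affect the qualitative picture.

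The crux is then the exponent analysis. Temperedness of each $\tau_i$ implies, via Casselman's criterion, that the central characters arising from any Bernstein-Zelevinsky derivative of $|\det|^{s_i}\tau_i$ have real part bounded by $s_i\leq s_1$; and the tempered representation $\pi_0$ contributes only Jacquet exponents with non-positive real part. Consequently, every central exponent of the Jacquet module of $\pi_2$ along $\bar{P}$, read on the split center of the $\GL_r(E)$-factor of $M$, has real part at most $s_1$. On the other hand, the representation $|\det|^{s'}\sigma$ has central exponent of real part exactly $s'$ on this same factor, again by temperedness of $\sigma$. The strict inequality $s'>s_1$ then prevents any matching of central characters, so the Hom space into each graded piece vanishes and the full Hom space must be zero.

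The main technical obstacle is the combinatorial bookkeeping of the exponents produced by the geometric lemma: each graded piece is indexed by a double coset and by a subdivision of the inducing $\GL_{m_i}$-factors, and one must check uniformly that no combination of derivatives, twists, and Weil-representation shifts can push a central exponent above $s_1$. This is precisely the analysis carried out in \cite[\S 1.4]{MW12} for Bessel models on special orthogonal groups, and the minor adjustments needed for the non-archimedean Fourier-Jacobi case, to account for $\omega_{n^+,\psi_F}$, follow the pattern already used in \cite{GI16, At18}.
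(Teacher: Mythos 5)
Your proposal is correct and follows essentially the same route as the paper: second adjointness to move to a Hom space on the Levi $\Res_{E/F}\GL_r\times G_{n^+-r}$ against the Jacquet module of $\pi_2^\vee$ along the opposite parabolic, followed by an exponent comparison showing that temperedness of the $\tau_i$ and $\pi_0$ bounds all exponents on the $\GL_r$-factor by $s_1<s'$. The paper compresses the exponent bookkeeping into a citation of \cite[Lemma A.0.4]{Ch23} rather than unwinding the geometric lemma, but the content is the same.
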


\begin{proof}
Our proof follows from that in \cite[p177]{MW12}. From the second adjointness theorem, we have
\[
\Hom_{G_{n^+}(F)}(\RI(s',\sigma, \pi_1),\pi_2^{\vee})=\Hom_{M_{X_{m_1}}(F)}(|\det|^{s'}\sigma \otimes \pi_1,\Jac_{P^-}(\pi_2^{\vee})),
\]
where $M_{X_{m_1}} = \Res_{E/F}\GL_{m_1} \times G_{n^+-m_1}$ and $P^-$ is the opposite parabolic subgroup with Levi subgroup $M_{X_{m_1}}$.
Suppose that $\Hom_{G_{n^+}(F)}(\RI(s',\sigma, \pi_1),\pi_2^{\vee})\neq 0$. Since $\Jac_{P^-}(\pi_2^{\vee})$ is admissible of finite length, there is an irreducible admissible representation $\pi_1'$ such that
\[
\Hom_{M_{X_{m_1}}(F)}(|\det|^{s'}\sigma \otimes \pi_1',\Jac_{P^-}(\pi_2^{\vee}))\neq 0,
\]
which leads to a contradiction when comparing the exponent (\cite[Lemma A.0.4]{Ch23}).  
\end{proof}

\begin{proof}[Proof for Part (2) of Proposition \ref{prop: three ingredients}] First, we take both sides of the inequality as the dimensions studied in (\ref{equ: multiplicity big}) and (\ref{equ: rhs expansion}). Based on the structures computed in
(\ref{equ: exact 1}), (\ref{equ: open contribution}) and (\ref{equ: graded pieces}), it suffices to show that
\begin{enumerate}
    \item $\Hom_{G_{n^+}(F)}(\pi \otimes \Gamma^{\CC}(\CZ,\CE_{\sigma^+}),\mathbbm{1})=0$, when  $s_1'\geq s_1$, and
    \item $\Hom_{G_{n^+}(F)}(\pi \otimes \ind_{P_{X_{m_1'}}^{(i)}(F)}^{G_{n^+}(F)}(|\det|^{s_1'+s_0}\Delta^i\tau_1 \otimes\sigma_{t-1}\otimes \mu_i^{-1}),\mathbbm{1})=0$ for all $1\leq i\leq m_1'$, when  $s_1'\geq s_1$.
\end{enumerate}

On the one hand,  for  $g\in P_{X_{m_1'}}(F)$, $x_{m_1'}\in X_{m_1'}$,\[\delta_{P_{X_{m_1'}}\ltimes X_{m_1'}}(g\ltimes x_{m_1'})=|\det(g_{\GL(X_{m_1'})})|\delta_{P_{X_{m_1'}}}(g)\]
Here $g_{\GL(X_{m_1'})}$ is the $\GL(X_{m_1'})$-factor of $g$ in the Levi component. Then we have
\[
\begin{aligned}
\Gamma^{\CC}(\CZ,\CE_{\sigma^+})=\ind_{P_{X_{m_1'}}(F)}^{G_{n^+}(F)}(\sigma^+|_{ P_{X_{m_1'}}})=\RI(s_1'+\frac{1}{2},\tau_1,\sigma_{t-1} \otimes \omega_{n^+-m_1',\psi_F}|_{G_{n-m_1'}}).
\end{aligned}
\]
Then from Lemma \ref{lem: vanishing}, we have
\begin{equation}\label{equ: vanishing closed}
\Hom_{G_{n^+}(F)}(\pi \otimes \Gamma^{\CC}(\CZ,\CE_{\sigma^+}),\mathbbm{1})=0,\quad \text{when } s_1'\geq s_1.
\end{equation}
On the other hand, for $1\leq i\leq m_1'$, 
\[
\begin{aligned}   
&\Hom_{G_{n^+}(F)}(\pi \otimes \ind_{P_{X_{m_1'}}^{(i)}(F)}^{G_{n^+}(F)}(|\det|^{s_1'+s_0}\Delta^i\tau_1 \otimes\sigma_{t-1}\otimes\mu_i^{-1}),\mathbbm{1})\\
=\,&\Hom_{G_{n^+}(F)}(\pi \otimes \RI(s_1'+\frac{i}{2},\Delta^i\tau_1, \ind_{G_{n^+-m_1'+i}\cap P_{X_{m_1'}}^{(i)}(F)}^{G_{n^+-m_1+i}(F)}(\sigma_{t'-1} \otimes \omega_{n^+-m_1',\psi_F}\otimes \mu_i^{-1})),\mathbbm{1}).
\end{aligned}
\]
Then from Lemma \ref{lem: vanishing}, we have
\[
\Hom_{G_{n^+}(F)}(\pi \otimes \ind_{P_{X_{m_1'}}^{(i)}(F)}^{G_{n^+}(F)}(|\det|^{s_1'+s_0}\Delta^i\tau_1 \otimes \sigma_{t-1}\otimes \mu_i^{-1}),\mathbbm{1})=0,\quad \text{when } s_1'\geq s_1.
\]

This completes the proof for Part (2) of Proposition \ref{prop: three ingredients}.
\end{proof}

\subsubsection{Mathematical induction}
We are ready to prove the first inequality
\begin{equation}\label{equ: first inequality}
m(\RI(\udl{s},\rho_1,\dots\rho_t,\pi_0),\RI(\udl{s}',\tau_1,\dots\tau_{t'},\sigma_0))\leq m(\pi_0,\sigma_0)   
\end{equation}
in Proposition \ref{pro: multiplicity} 
using Parts (1) and (2) of Proposition \ref{prop: three ingredients}. 

First, we reduce it to the equal-rank cases, that is, the situations with $n^+=n^{\prime,+}$. When $n^+\neq n^{\prime,+}$, say $n^+>n^{\prime,+}$, we may choose a supercuspidal representation $\rho_0$ of $\GL_{n^+-n^{\prime,+}}(E)$ such that by Part (1) of Proposition \ref{prop: three ingredients}, we have 
\[
m(\RI(\udl{s},\rho_1,\dots\rho_t,\pi_0),\RI(\udl{s}',\tau_1,\dots\tau_{t'},\sigma_0))=m(\RI(\udl{s}^+,\rho_0,\rho_1,\dots\rho_t,\pi_0),\RI(\udl{s}',\tau_1,\dots\tau_{t'},\sigma_0))
\]
with $\udl{s}^+=(0,s_1,\cdots,s_{t})$. The comparison of $m(\RI(\udl{s}^+,\rho_0,\rho_1,\dots\rho_t,\pi_0),\RI(\udl{s}',\tau_1,\dots\tau_{t'},\sigma_0))$ with $m(\pi_0,\sigma_0)$ is an equal-rank case, which can be proved by induction on $N= N(\RI(\udl{s},\rho_1,\dots\rho_t,\pi_0),\RI(\udl{s}',\tau_1,\dots\tau_{t'},\sigma_0))$, the number of nonzero elements among $s_1,\cdots,s_t,s_1',\cdots,s_{t'}'$.

When $N(\RI(\udl{s},\rho_1,\dots\rho_t,\pi_0),\RI(\udl{s}',\tau_1,\dots\tau_{t'},\sigma_0))=0$, the $s_i$ and $s_i'$ are all equal to zero. By the full decomposition of $\pi=\RI(\udl{s},\rho_1,\dots\rho_t,\pi_0)$ and $\sigma=\RI(\udl{s}',\tau_1,\dots\tau_{t'},\sigma_0)$ computed in \cite{Ar13} and \cite{Mok15}  (see \cite[Desideratum 2.1(4)]{At16}) and then apply the local Gan-Gross-Prasad conjecture for tempered $L$-parameters for each component, we have $m(\pi_0,\sigma_0)=1$ if and only if there exactly one pair of $(\pi',\sigma')$ such that  $m(\pi',\sigma')=1$, where  $\pi',\sigma'$ are irreducible components of $\pi,\sigma$.

When $N(\RI(\udl{s},\rho_1,\dots\rho_t,\pi_0),\RI(\udl{s}',\tau_1,\dots\tau_{t'},\sigma_0))=k\geq 1$, suppose that the inequality holds for all $N=k-1$ situations. We may assume that
\[
s_1\geq \cdots \geq s_t\geq 0,\quad \text{and}\quad s_1'\geq \cdots \geq s_{t'}'\geq 0.
\]

\begin{enumerate}
    \item If $s_1'\geq s_1$, then from Part (2) of Proposition \ref{prop: three ingredients}, we have 
    \[
    \begin{aligned}
  & m(\RI(\udl{s},\rho_1,\dots\rho_t,\pi_0),\RI(\udl{s}',\tau_1,\dots\tau_{t'},\sigma_0)) \\
 \leq \, & m(\RI(\udl{s},\rho_1,\dots\rho_t,\pi_0),\RI((s_2',\cdots,s_{t'}'),\tau_2,\dots\tau_{t'},\sigma_0)).
\end{aligned}
    \]
By Part (1) of Proposition \ref{prop: three ingredients}, we can choose a supercuspidal representation $\tau_0$ of $\GL_{m_1'}(E)$ such that
\[    
\begin{aligned} 
& m(\RI(\udl{s},\rho_1,\dots\rho_t,\pi_0),\RI((0,s_2',\cdots,s_{t'}'),\tau_0,\tau_2,\dots\tau_{t'},\sigma_0)) \\
=\, & m(\RI(\udl{s},\rho_1,\dots\rho_t,\pi_0),\RI((s_2',\cdots,s_{t'}'),\tau_2,\dots\tau_{t'},\sigma_0)),
\end{aligned}
\]
noting that 
\[
N(\RI(\udl{s},\rho_1,\dots\rho_t,\pi_0),\RI((0,s_2',\cdots,s_{t'}'),\tau_0,\tau_2,\dots\tau_{t'},\sigma_0))=k-1. 
\]
\item If $s_1'<s_1$,
then from Part (2) of Proposition \ref{prop: three ingredients}, we have 
    \[
    \begin{aligned}  
 & m(\RI(\udl{s},\rho_1,\dots\rho_t,\pi_0),\RI(\udl{s}',\tau_1,\dots\tau_{t'},\sigma_0)) \\
=\, &m(\RI(\udl{s}',\tau_1,\dots\tau_{t'},\sigma_0),\RI(\udl{s},\rho_1,\dots\rho_t,\pi_0))\\
\leq\, &m(\RI(\udl{s}',\tau_1,\dots,\tau_{t'},\sigma_0),\RI((s_2,\cdots,s_{t}),\rho_2,\dots\rho_t,\pi_0)).
    \end{aligned}
    \]
By Part (1) of Proposition \ref{prop: three ingredients}, we can choose a supercuspidal representation $\tau_0$ of $\GL_{n_1}(E)$ such that
\[   
\begin{aligned}
 & m(\RI(\udl{s}',\tau_1,\dots,\tau_{t'},\sigma_0),\RI((s_2,\cdots,s_{t}),\rho_1,\dots\rho_t,\pi_0)) \\
=\, &m(\RI(\udl{s}',\tau_1,\tau_2\dots\tau_{t'},\pi_0),\RI((0,s_2,\cdots,s_{t}),\rho_0,\rho_2,\dots\rho_{t},\sigma_0))\\
=\, & m(\RI((0,s_2,\cdots,s_{t}),\rho_0,\rho_2,\dots\rho_{t},\pi_0),\RI(\udl{s}',\tau_1,\dots\tau_t,\sigma_0)),
\end{aligned}
\]
noting that
\[
N(\RI((0,s_2,\cdots,s_{t}),\rho_0,\rho_2,\dots\rho_{t},\pi_0),\RI(\udl{s}',\tau_1,\dots\tau_t,\sigma_0))=k-1.
\]
\end{enumerate}
Therefore, in both situations, we can reduce (\ref{equ: first inequality}) to the inequality for 
\[
m(\RI(\udl{s},\rho_1,\dots\rho_t,\pi_0),\RI((0,s_2',\cdots,s_{t'}'),\tau_0,\tau_2,\dots\tau_{t'},\sigma_0))
\]
or
\[
m(\RI((0,s_2,\cdots,s_{t}),\rho_0,\rho_2,\dots\rho_{t},\pi_0),\RI(\udl{s}',\tau_1,\dots\tau_t,\sigma_0)),
\]  
which can be obtained by the induction hypothesis. This completes the mathematical induction.

\subsection{The second inequality}
In this section, we prove the basic forms of second inequality (Part (3) of Proposition \ref{prop: three ingredients}) following the integral method in \cite{MW12}.
\subsubsection{Construction of the integral}
We work in the situation when $n^+=n^{\prime,+}$, under the assumption that $m(\pi_0,\sigma_0)\neq 1$. We will prove by construction that 
\[
m(\RI(\udl{s},\rho_1,\dots\rho_t,\pi_0),\RI(\udl{s}',\tau_1,\dots,\tau_{t'},\sigma_0))\neq 0,
\]

We first define $\pi_{\udl{s}} (\udl{s}\in \BC^{t})$ using induction from $P$ in \eqref{SM1} when $\RG_{n^+}$ is not metaplectic
and in \eqref{SM-Mp} when $\RG_{n^+}$ is metaplectic. We define $\sigma_{\udl{s}'} (\udl{s}'\in \BC^{t'})$ similarly using induction from $P'$. Then
\[
\pi_{\udl{s}}=\RI(\udl{s},\rho_1,\dots\rho_t,\pi_0),\quad \sigma_{\udl{s}'}=\RI(\udl{s}',\tau_1,\dots,\tau_{t'},\sigma_0),\quad \udl{s}\in \BR^{t},\udl{s}'\in \BR^{t'}
\]
Let $K$ and $K'$ be maximal compact subgroups of $G_{n^+}(F)$ and $H_{n^+}(F)$, which are in good position with respect to $P$ and $P'$ respectively. Then the underlying spaces of $\pi_{\udl{s}}$ and $\sigma_{\udl{s'}}$ can be realized as
\[
\pi_K=\Ind_{P\cap K}^{K}(\rho_1 \otimes \cdots \otimes \rho_{t} \otimes \pi_0)\quad \text{and}\quad
\sigma_{K'}=\Ind_{P'\cap K'}^{K'}(\tau_1 \otimes \cdots \otimes \tau_{t'} \otimes \sigma_0)
\]
respectively, and the actions of $\pi_{\udl{s}}$ and $\sigma_{\udl{s}'}$ are given by 
\[
\pi_{\udl{s}}(pk)v=\left(\delta_{P}^{1/2}|\det|^{s_1}\rho_1 \otimes \cdots \otimes |\det|^{s_t}\rho_t \otimes \pi_0\right)(p)\pi_K(k)v, \text{ 
for } p\in P, k\in K,
\]
\[
\sigma_{\udl{s}'}(p'k')v'=\left(\delta_{P'}^{1/2}|\det|^{s_1'}\tau_1 \otimes \cdots \otimes |\det|^{s_{t'}'}\tau_{t'} \otimes \sigma_0\right)(p')\sigma_{K'}(k')v', \text{ 
for } p'\in P', k'\in K'.
\]
Following \cite{MW12}, we define the integral
\[
\begin{aligned}
\RI_{v,v^*,v'\otimes w,v^{\prime,*}\otimes w^*}(\udl{s},\udl{s}')&=\int_{G_{n^+}(F)}\langle\pi_{\udl{s}}(g)v, v^*\rangle \langle(\sigma_{\udl{s'}}\otimes \omega_{n^+, \psi_F})(g,0)v'\otimes w,v^{\prime,*}\otimes w^*\rangle dg\\
&=\int_{G_{n^+}(F)}\langle\pi_{\udl{s}}(g)v, v^*\rangle \langle\sigma_{\udl{s'}}(g)v',v^{\prime,*}\rangle\langle \omega_{n^+,\psi_F}(g,0)w, w^*\rangle dg,
\end{aligned}
\]
where $v\otimes v' \otimes w\in \pi_K\otimes \sigma_{K'}\otimes \omega_{n^+, \psi_F}$ and 
$ v^*\otimes v^{\prime, *} \otimes w^* \in \pi_K^\vee\otimes \sigma_{K'}^\vee \otimes \omega_{n^+,\psi_F}^\vee$. 

\begin{prop}\label{pro: integral}
When $m(\pi_0,\sigma_0)\neq 0$, the following hold.

\begin{enumerate}
\item 
There is a constant $c>0$ such that the integral $\RI_{v,v^*,v'\otimes w,v^{\prime,*}\otimes w^*}(\udl{s},\udl{s}')$ is absolutely convergent when $\Re(s_i)<c$;

\item There exists $(\udl{s},\udl{s}')\in (i\BR)^{t+t'}$ such that
\[
\RI_{v,v^*,v'\otimes w,v^{\prime,*}\otimes w^*}(\udl{s},\udl{s}')\neq 0;
\]
    \item 
The integral $\RI_{v,v^*,v'\otimes w,v^{\prime,*}\otimes w^*}(\udl{s},\udl{s}')$ has a meromorphic continuation to $\BC^{r}\times \BC^{r'}$.
\end{enumerate}
\end{prop}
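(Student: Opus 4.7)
The proof follows the framework of M\oe glin--Waldspurger \cite{MW12}, with adjustments for the Fourier-Jacobi setting along the lines of Gan--Ichino \cite{GI16}. The plan is to handle the three parts by separate techniques: convergence via asymptotic estimates on matrix coefficients, non-vanishing via the tempered local Gan--Gross--Prasad conjecture at an imaginary base point, and meromorphic continuation via Bernstein's principle.

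For part (1), the \textbf{convergence} step, I would fix a good maximal compact $K\subset G_{n^+}(F)$ and use the Cartan decomposition $G_{n^+}(F)=KA^+K$, with $A^+$ a positive chamber in a maximal split torus. The standard matrix coefficients $\langle \pi_{\udl{s}}(g)v,v^*\rangle$ and $\langle \sigma_{\udl{s}'}(g)v',v^{\prime,*}\rangle$ of the parabolically induced representations admit the usual estimates: along $A^+$ they factor as exponentials in $\udl{s}$ (respectively $\udl{s}'$) times a tempered matrix coefficient of $\pi_0$ (respectively $\sigma_0$), the latter bounded by Harish-Chandra's $\Xi$-function of the corresponding Levi. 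The Heisenberg--oscillator matrix coefficient $\langle \omega_{n^+,\psi_F}(g,0)w,w^*\rangle$ decays rapidly along $A^+$ for $w,w^*$ chosen from the Schr\"odinger model. Integrating the resulting product against $|\delta_{A^+}|$ and combining with the compact integration over $K\times K$, one obtains absolute convergence whenever $\max_i\Re(s_i)$ and $\max_j\Re(s_j')$ are less than a positive constant $c$, depending only on the tempered decay rates of $\pi_0$ and $\sigma_0$ and on the oscillator decay.

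For part (2), the \textbf{non-vanishing} step, pick $(\udl{s}_0,\udl{s}_0')\in(i\BR)^{t+t'}$ at which $\pi_{\udl{s}_0}$ and $\sigma_{\udl{s}_0'}$ are irreducible and tempered; such generic points exist on a dense open subset. By the already-established tempered local Gan--Gross--Prasad conjecture for the Fourier-Jacobi model \cite{GI16, At18}, the hypothesis $m(\pi_0,\sigma_0)\neq 0$ forces $m(\pi_{\udl{s}_0},\sigma_{\udl{s}_0'})\neq 0$. The unique-up-to-scalar invariant functional realizing this multiplicity can be expressed as an explicit integral of matrix coefficients (the classical realization in the tempered case, as in Waldspurger's work), so there exist test vectors $v,v^*,v'\otimes w, v^{\prime,*}\otimes w^*$ for which $\RI_{v,v^*,v'\otimes w,v^{\prime,*}\otimes w^*}(\udl{s}_0,\udl{s}_0')\neq 0$. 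After verifying that $(\udl{s}_0,\udl{s}_0')$ can be chosen inside the region of absolute convergence established in (1), this gives the desired non-vanishing.

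For part (3), the \textbf{meromorphic continuation}, the integrand depends holomorphically on $(\udl{s},\udl{s}')\in\BC^{t+t'}$, and within the region of absolute convergence it defines a holomorphic function. Restricting to test vectors fixed by a prescribed open compact subgroup puts the integrals into a finite-dimensional family with polynomial dependence on $q^{s_i}, q^{s_j'}$, so Bernstein's principle of meromorphic continuation applies and extends $\RI_{v,v^*,v'\otimes w,v^{\prime,*}\otimes w^*}(\udl{s},\udl{s}')$ to a meromorphic function on $\BC^{t+t'}$; this argument already appears in \cite[Section~2]{MW12} for the Bessel case. The \textbf{main obstacle} is the convergence analysis in (1): unlike the Bessel case, the oscillator matrix coefficient does not factor out against a smaller subgroup and its decay along $A^+$ must be extracted carefully and shown to dominate the exponential growth coming from the Langlands parameters $\udl{s},\udl{s}'$. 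Once the explicit decay rate of $\omega_{n^+,\psi_F}$ along $A^+$ is pinned down, the remaining steps run in parallel to \cite{MW12}.
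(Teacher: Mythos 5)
Your parts (1) and (2) follow the same route as the paper: the convergence estimate rests on the weak Harish--Chandra--Schwartz bound for the tempered matrix coefficients together with the decay $|a_1\cdots a_r|^{1/2}$ of the oscillator coefficient along $A^+$ (the paper's Lemma on matrix-coefficient estimates), and the non-vanishing at a generic imaginary point combines the tempered Gan--Gross--Prasad conjecture with the statement that $m(\pi_{\udl{s}},\sigma_{\udl{s}'})\neq 0$ forces the tempered intertwining $\CL_{\pi_{\udl{s}},\sigma_{\udl{s}'}}\neq 0$. One caveat on (2): you treat that last implication as classical ("as in Waldspurger's work"), but for the Fourier--Jacobi model over a non-archimedean field it is not in the literature and the paper has to establish it (its Theorem on tempered intertwinings, proved by transporting Xue's archimedean argument using the same estimate as in part (1)). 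You should flag that this is an input requiring proof, not a citation.

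The genuine gap is in part (3). Bernstein's continuation principle does not apply in the way you describe: the value of $\RI_{v,v^*,v'\otimes w,v^{\prime,*}\otimes w^*}(\udl{s},\udl{s}')$ is an infinite sum over the Cartan chamber, not a polynomial in $q^{\pm s_i}, q^{\pm s_j'}$, and fixing vectors under an open compact subgroup does not reduce it to a polynomial family of linear systems of the kind Bernstein's principle addresses. What \cite{MW12} actually does (and what the paper does) is compute the asymptotics of each factor of the integrand deep in the chamber via Casselman's canonical pairing, writing the integral as a sum over $\udl{n}\in\CN^+$ of $\mathrm{meas}(K\varpi^{\udl{n}}K)\,\chi_{\udl{s}}(\varpi^{\udl{n}})\chi_{\udl{s}'}(\varpi^{\udl{n}})$ up to finitely many coefficients, and then summing the resulting geometric series to a rational function of $q^{-s_i},q^{-s_j'}$ by Macdonald's formula. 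To run this in the Fourier--Jacobi setting you need, in addition to Casselman's pairing for $\pi_{\udl{s}}$ and $\sigma_{\udl{s}'}$, an analogue for the Heisenberg--oscillator representation: the coinvariants $(\omega_{n^+,\psi_F})_{N\ltimes X_{m_t}}$ must be identified with the smaller oscillator representation $\omega_{n^+-m_t,\psi_F}$ (with the $\GL$-blocks of the Levi acting trivially), together with a nondegenerate pairing against $(\omega_{n^+,\psi_F^{-1}})_{N^-\ltimes Y_{m_t}}$. This is precisely the new lemma the paper supplies, and it is the ingredient your proposal is missing; your "main obstacle" paragraph locates the oscillator difficulty only in the convergence step, where it is in fact handled by the elementary $|a_1\cdots a_r|^{1/2}$ bound, whereas the real Fourier--Jacobi-specific work sits in the continuation step.
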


We can prove the second inequality 
\begin{equation}
m(\pi_{\udl{s}_0},\sigma_{\udl{s}'_0})\geq m(\pi_0,\sigma_0)
\end{equation}
using this proposition. Since $\pi_0$ and $\sigma_0$ are irreducible, we have $m(\pi_0,\sigma_0)\leq 1$ by the multiplicity-one theorem (\cite{AGRS10, Su12}).
 When $m(\pi_0,\sigma_0)=0$,  the inequality always holds. When $m(\pi_0,\sigma_0)=1$, from Proposition \ref{pro: integral}, we obtain a nonzero meromorphic family in \[\Hom_{G_{n^+}(F)}(\pi_{\udl{s}} \otimes \sigma_{\udl{s}'} \otimes \omega_{n^+,\psi_F}, \mathbbm{1}).\]
 By taking the principal term at $\udl{s}=\udl{s_0}$, $\udl{s}'=\udl{s}_0'$, we obtain a nonzero element in 
 \[
 \Hom_{G_{n^+}(F)}(\pi_{\udl{s}_0} \otimes \sigma_{\udl{s}'_0} \otimes \omega_{n^+,\psi_F},\mathbbm{1}).
 \]
 Therefore,
 \[ m(\pi_{\udl{s}_0},\sigma_{\udl{s}'_0})\geq 1=m(\pi_0,\sigma_0).
 \]
This completes the proof of Proposition \ref{pro: multiplicity}.

\subsubsection{Estimates and Tempered intertwinings}
We use some estimates to prove Proposition \ref{pro: integral}. For two functions $f_1$ and $f_2$ on $G$, we denote by $f_1\ll f_2$ if there is a constant $C>0$ such that  
\[
| f_1(g) | <C | f_2(g) |,\quad \text{ for all }g\in G.
\]

For a reductive algebraic group $G$ over $F$, we denote by $\Xi^G(g)$ the Harish-Chandra $\Xi$-function of $G(F)$, and denote by $\iota_G(g)$  the log-norm on $G(F)$.


When $G$ is unitary or symplectic, we fix a minimal parabolic subgroup $P_0=M_0\ltimes N$  of $G_{n^+}$ and let $A_0$ be the maximal split subtorus of $M_0$. Let $r$ be the split rank of $A_0$. The Cartan decomposition of $G_{n^+}$ gives
 \[
 G_{n^+}(F)=K_{G_{n^+}}A^+_{G_{n^+}}K_{G_{n^+}}, \quad A^+_{G_{n^+}}=\set{a\in A_0(F) | |\alpha(a)|\geq 1,\alpha\in R(A_0,P_0)},
 \]
 where $R(A_0,P_0)$ is the set of roots of $A_0$ in the unipotent radical of $P_0$. When taking $P_0$ to be upper-triangular, we have
 \begin{equation}\label{equ: Cartan parameterize 1}
A^+_{G_{n^+}}=\set{ \diag(\varpi^{n_1}_E,\cdots,\varpi^{n_r}_E) | n_1\geq n_2\geq \cdots n_r\geq 0},
 \end{equation}
 where $\varpi_E$ is the uniformizer of $\Fo_E$.

When $G$ is metaplectic, if we take $K_{G_{n^+}}$, $A^+_{G_{n^+}}$ to be the inverse images of $K_{\Sp(2n^+)}$, $A^+_{\Sp(2n^+)}$, then
\[
G_{n^+}(F)=K_{G_{n^+}}A^+_{G_{n^+}}K_{G_{n^+}}.
\]
From now on, we lift representations of $G_{n^+}(F)$ and $H_{n^+}(F)$ to representations of $\wt{G}_{n^+}$ (\eqref{equ: define wtG}). For preimages $\wt{a}_1$ and $\wt{a}_2$ of $a\in A_{\Sp(2n^+)}^+$, 
\[
K_{G_{n^+}}\wt{a}_1K_{G_{n^+}}=K_{G_{n^+}}\wt{a}_2K_{G_{n^+}}.
\]
We write this space of $K_{G_{n^+}}a K_{G_{n^+}}$ for simplicity.

\begin{lem}\label{lem: estimate of matrix coefficient}
Given $v\otimes v' \otimes w\in \pi_K\otimes \sigma_{K'}\otimes \omega_{n^+,\psi_F}$ and 
$ v^*\otimes v^{\prime, *} \otimes w^* \in \pi_K^\vee\otimes \sigma_{K'}^\vee \otimes \omega_{n^+,\psi_F}^\vee$, there exists 
$\epsilon>0$  such that  
\[
\langle\pi_{\udl{0}}(g)v, v^*\rangle \langle(\sigma_{\udl{0}'}\otimes \omega_{n^+, \psi_F})(g,0)v'\otimes w,v^{\prime,*}\otimes w^*\rangle \ll \Xi^{G_{n^+}}(g)^2
\exp(-\epsilon\iota_{G_{n^+}}(g)).
\]
\end{lem}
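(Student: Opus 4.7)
The plan is to estimate the three matrix coefficient factors separately and combine them. Since $\udl{s}=\udl{0}$ and $\udl{s}'=\udl{0}'$, the representations $\pi_{\udl{0}}$ and $\sigma_{\udl{0}'}$ are unitarily parabolically induced from the tempered representations $\rho_1\otimes\cdots\otimes\rho_t\otimes \pi_0$ and $\tau_1\otimes\cdots\otimes\tau_{t'}\otimes\sigma_0$ respectively, hence are themselves tempered representations of $\wt{G}_{n^+}(F)$. By the Harish-Chandra/Waldspurger estimate for matrix coefficients of tempered representations of reductive $p$-adic groups (extended in the standard way to the metaplectic and unitary covers), for the fixed $K$-finite vectors we have
$$|\langle \pi_{\udl{0}}(g) v, v^*\rangle| \ll \Xi^{G_{n^+}}(g)\quad \text{and}\quad |\langle \sigma_{\udl{0}'}(g) v', v^{\prime,*}\rangle| \ll \Xi^{G_{n^+}}(g).$$

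For the third factor I would establish the exponential decay
$$|\langle \omega_{n^+,\psi_F}(g,0) w, w^*\rangle| \ll \exp(-\epsilon \iota_{G_{n^+}}(g))$$
for some $\epsilon>0$ depending on $w, w^*$. This is the heart of the argument. Using the Schr\"odinger (or mixed) model of $\omega_{n^+,\psi_F}$, one computes the action of an element $a\in A^+_{G_{n^+}}$ explicitly in terms of the parameters $(n_1,\dots,n_r)$ of \eqref{equ: Cartan parameterize 1}. One finds that, restricted to a fixed $K_{G_{n^+}}$-finite lattice of Schwartz vectors, $\omega_{n^+,\psi_F}(a)$ acts essentially by a character times the scaling factor $|\det a|^{1/2}\asymp q^{-c\sum_i n_i}$ for some $c>0$, and $\sum_i n_i$ is comparable to $\iota_{G_{n^+}}(a)$ on $A^+_{G_{n^+}}$. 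Using $K_{G_{n^+}}$-finiteness of $w, w^*$, the Cartan decomposition $G_{n^+}(F)=K_{G_{n^+}}A^+_{G_{n^+}}K_{G_{n^+}}$ then transports the pointwise bound to all of $G_{n^+}(F)$.

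Multiplying the three pointwise estimates yields the desired bound $\Xi^{G_{n^+}}(g)^2 \exp(-\epsilon \iota_{G_{n^+}}(g))$. The main technical obstacle is the exponential decay of the Heisenberg-oscillator matrix coefficients: since $\omega_{n^+,\psi_F}$ is not tempered as a representation of $\wt{G}_{n^+}$ on the full Schwartz space in the naive sense, one has to argue case-by-case (symplectic/metaplectic/unitary) using the explicit Weil model and track the action on Schwartz–Bruhat functions that are fixed under a sufficiently small open compact subgroup. Once this decay is confirmed, the tempered estimates are immediate from standard Harish-Chandra theory and the combination is routine.
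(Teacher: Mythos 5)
Your proposal follows essentially the same route as the paper: weak Harish-Chandra (tempered) estimates for the two induced factors, explicit exponential decay of the oscillator matrix coefficient on $A^+_{G_{n^+}}$ obtained from the mixed model via the scaling factor $|a_1\cdots a_r|^{1/2}$, and then the Cartan decomposition to combine the three bounds. The one imprecision is that matrix coefficients of tempered representations are only bounded by $\Xi^{G_{n^+}}(g)\,\iota_{G_{n^+}}(g)^{d}$ for some $d>0$ rather than by $\Xi^{G_{n^+}}(g)$ alone; the paper keeps these polynomial factors and absorbs them into the exponential by shrinking $\epsilon$, which your argument should do as well.
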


\begin{proof}
We take  $\wt{g}$  to be  a preimage of $g$ in  $\wt{G}_{n^+}(F)$.   Since $\pi_{\udl{0}}$ and $\sigma_{\udl{0}'}$ are tempered and the matrix coefficients of tempered representations are weakly Harish-Chandra Schwartz (\cite[III.2]{W03}, \cite[Proposition 2.3.1]{Li12}), there exists $d_1,d_2>0$ such that
   \[
\langle\pi_{\udl{0}}(\wt{g})v, v^*\rangle \ll \Xi^{\wt{G}_{n^+}}(\wt{g})\iota_{\wt{G}_{n^+}}(\wt{g})^{d_1}
    \]
\[
\langle\sigma_{\udl{0}'}(\wt{g})v', v^{\prime,*}\rangle \ll \Xi^{\wt{G}_{n^+}}(\wt{g})\iota_{\wt{G}_{n^+}}(\wt{g})^{d_2}.
\]
It is known and easy to show that (cf. proof of \cite[Lemma 9.1]{GI11}),
   \[    
   \langle\omega_{n^+,\psi_F}(\wt{a}) w,w^*\rangle\ll |a_1\cdots a_r|^{1/2},
   \quad a =\diag(a_1,\cdots ,a_r)\in A^+_{G_{n^+}},
   \]
   where we fix a splitting of the metaplectic cover over $A_0(F)$.
 To prove this estimation, we take a polarization of the underlying symplectic or skew-Hermitian space $V = X \oplus V_0 \oplus X^\vee$ which is preserved by $A_0(F)$, with $V_0$ the anisotropic kernel and $X\cong E^r$ maximal totally isotropic. The Weil representation $\omega_{n^+,\psi_F}$ can be realized as the mixed model $\mathcal{S}(X)\otimes \omega_{V_0, \psi_F}$, where $\mathcal{S}(X)$ is the space of Bruhat-Schwartz functions on $X$ and $\omega_{V_0,\psi_F}$ is the Weil representation associated to $V_0$. Then $a =(a_1,\ldots,a_r)\in A_0(F)$ acts on $w= \phi\otimes\phi_0\in \mathcal{S}(X)\otimes \omega_{V_0, \psi_F}$ by 
   \[
   \omega_{n^+,\psi_F}(\wt{a})w =  \pm|a_1\cdots a_r|^{1/2}\phi(\cdot\, a) \otimes \phi_0.
   \]
For Bruhat-Schwartz functions $\phi, \phi'\in \mathcal{S}(X)$, one has that 
\[
\int_{X}\phi(xa) \overline{\phi'(x)}dx \leq C_{\phi, \phi'},\quad \wt{a}\in A^+_{\wt{G}_{n^+}}
\]
for some constant $C_{\phi,\phi'}$. 
This easily implies the required estimation 
\[
\langle\omega_{n^+,\psi_F}(\wt{a}) w,w^*\rangle\ll |a_1\cdots a_r|^{1/2},\quad a \in A^+_{\wt{G}_{n^+}}.
\]
From the Cartan decomposition of $\wt{G}_{n^+}$,  there exists $\epsilon>0$ such that
\[
\langle\omega_{n^+,\psi_F}(\wt{g}) w,w^*\rangle\ll 
\exp(-\epsilon \iota_{G_{n^+}}(\wt{g})).
\]
The lemma follows easily. 
\end{proof}

From \cite[Lemme II.1.5]{W03} and \cite[Proposition 1.5.1 (v)]{BP20}, there exists $d>0$ such that the integral
\[
\int_{G_{n^+}(F)}\Xi^{G_{n^+}}(g)^2\iota_{G_{n^+}}(g)^{-d}dg
\]
is absolutely convergent. Using this and Lemma \ref{lem: estimate of matrix coefficient}, we obtain Proposition \ref{pro: integral} (1).

For $\udl{s}\in (i\BR)^t$ and $\udl{s}'\in (i\BR)^{t'}$, the tempered intertwining $\CL_{\pi}$ is defined as 
\[
\CL_{\pi_{\underline{s}},\sigma_{\underline{s}'}}(v\otimes v'\otimes w,v^*\otimes v^{\prime,*}, w^*)=\RI_{v,v^*,v'\otimes w,v^{\prime,*}\otimes w^*}(\udl{s},\udl{s}').
\]
The following theorem is crucial for many approaches to the local Gan-Gross-Prasad conjecture for tempered $L$-parameters. This result has been established in all the other situations. We refer to \cite[Proposition 5.7]{W12a} for non-archimedean Bessel cases, to \cite[Proposition 7.2.1]{BP20} for archimedean Bessel cases and to \cite[Theorem 3.2]{X3} for archimedean Fourier-Jacobi cases.
\begin{thm}\label{thm: tempered intertwining nonzero}
For tempered representations $\pi_V,\pi_W$ of $G_{n^+},H_{n^+}$, if $m(\pi_V,\pi_W)\neq 0$, then $\CL_{\pi_V\wh{\otimes} \pi_W}\neq 0$.
\end{thm}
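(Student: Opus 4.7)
The plan is to adapt the strategy of Waldspurger from \cite{W12a} for the non-archimedean Bessel case (Proposition 5.7 there) to the Fourier-Jacobi setting, in parallel with the archimedean treatment of \cite[Theorem 3.2]{X3}. Given the hypothesis $m(\pi_V, \pi_W) \neq 0$ together with the multiplicity-one theorem \cite{AGRS10, Su12}, the space
\[
\Hom_{G_{n^+}(F)}(\pi_V \wh{\otimes} \pi_W \wh{\otimes} \omega_{n^+,\psi_F}, \mathbbm{1})
\]
is exactly one-dimensional. Let $\ell$ denote a nonzero generator, and let $\ell^\vee$ be the analogous nonzero generator of the corresponding one-dimensional $\Hom$ space for the contragredient representations. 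The goal is to show that $\CL_{\pi_V \wh{\otimes} \pi_W}$ is a nonzero scalar multiple of $\ell \otimes \ell^\vee$.

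Absolute convergence of the integral defining $\CL_{\pi_V \wh{\otimes} \pi_W}$ follows immediately from Lemma \ref{lem: estimate of matrix coefficient}: the integrand is bounded by $\Xi^{G_{n^+}}(g)^2 \exp(-\epsilon\, \iota_{G_{n^+}}(g))$ for some $\epsilon > 0$, and the weight $\Xi^{G_{n^+}}(g)^2 \iota_{G_{n^+}}(g)^{-d}$ is integrable for large $d$ by \cite[Lemme II.1.5]{W03} and \cite[Proposition 1.5.1]{BP20}. Moreover, the integral is manifestly $G_{n^+}(F) \times G_{n^+}(F)$-invariant via the diagonal action on the two factors, since the integrand transforms by the product of matrix coefficients. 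Hence by multiplicity one,
\[
\CL_{\pi_V \wh{\otimes} \pi_W}(\xi, \xi^*) = c \cdot \ell(\xi)\, \ell^\vee(\xi^*),
\]
for some scalar $c \in \BC$, where $\xi$ runs over $\pi_V \wh{\otimes} \pi_W \wh{\otimes} \omega_{n^+,\psi_F}$ and $\xi^*$ over its smooth dual. The theorem therefore reduces to proving $c \neq 0$.

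The crucial step is establishing $c \neq 0$, which I would do via a spectral decomposition argument in the style of \cite[Section 5]{W12a}. The diagonal matrix coefficient
\[
f(g) := \langle\pi_V(g)v, v^*\rangle\, \langle\pi_W(g)v', v^{\prime,*}\rangle\, \langle\omega_{n^+,\psi_F}(g)w, w^*\rangle
\]
lies in the weak Harish-Chandra Schwartz space of $\wt{G}_{n^+}(F)$ by the very estimates used in the proof of Lemma \ref{lem: estimate of matrix coefficient}. Applying Harish-Chandra's Plancherel formula to $f$ yields a spectral expansion whose evaluation at the identity recovers the quantity $f(e) = \langle v, v^*\rangle \langle v', v^{\prime,*}\rangle \langle w, w^*\rangle$, which is nonzero for well-chosen vectors. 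Isolating the spectral projector onto the component cut out by the invariant form $\ell$, as in Waldspurger's argument, expresses this nonzero value as a positive Plancherel constant times $\CL_{\pi_V \wh{\otimes} \pi_W}$ evaluated on the same vectors, forcing $\CL_{\pi_V \wh{\otimes} \pi_W}$ to be nonzero.

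The main obstacle will be adapting Waldspurger's Plancherel-formula machinery to accommodate both the additional Weil-representation factor $\langle\omega(g)w, w^*\rangle$ and the metaplectic cover $\wt{G}_{n^+}(F)$. This requires fixing compatible splittings of the cover over the relevant subgroups (in particular over the maximal split torus $A_0(F)$, so that the Cartan-decomposition estimates in Lemma \ref{lem: estimate of matrix coefficient} apply uniformly), and verifying that the spectral projector identity extends to genuine representations of $\wt{G}_{n^+}(F)$. These modifications should be routine in light of the archimedean case \cite[Theorem 3.2]{X3} and the established Plancherel theory for metaplectic groups, but the careful handling of the oscillator factor is the most delicate point.
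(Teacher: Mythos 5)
The paper's own ``proof'' of this theorem is a single sentence deferring to \cite[\S 3]{X3}, so your proposal is necessarily supplying details the paper omits. Your identification of the sources, the convergence argument via Lemma \ref{lem: estimate of matrix coefficient}, and the multiplicity-one reduction to a scalar $c$ are all fine. The problem is the step where you prove $c\neq 0$, which is the entire content of the theorem, and as written that step does not work. Put $\Pi=\pi_V\,\wh{\otimes}\,\pi_W\,\wh{\otimes}\,\omega_{n^+,\psi_F}$ and $f(g)=\langle \Pi(g)\xi,\xi^*\rangle$. The quantity you must show is nonzero is $\CL(\xi,\xi^*)=\int_{G_{n^+}(F)}f(g)\,dg$, i.e.\ the value of the Fourier transform of $f$ at the \emph{trivial} representation of $G_{n^+}(F)$. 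The Harish-Chandra Plancherel formula, applied to $f$ and evaluated at the identity, expresses $f(e)$ as an integral of $\tr\,\tau(f)$ over the \emph{tempered} dual; since $G_{n^+}$ is not anisotropic in the cases at issue, the trivial representation is not in the support of the Plancherel measure, so no term of that expansion equals $\int_G f$, and there is no ``spectral projector onto the component cut out by $\ell$'': the invariant functional $\ell$ lives on $\Pi$ and is not a tempered representation occurring in the decomposition of $L^2(G_{n^+}(F))$. The asserted identity $f(e)=(\text{positive constant})\cdot\CL(\xi,\xi^*)$ therefore has no justification; for an individual $f$ one can perfectly well have $f(e)\neq 0$ while $\int_G f=0$.

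The argument in \cite[Prop.\ 5.7]{W12a}, \cite[\S 7.2]{BP20} and \cite[\S 3]{X3} runs through the contrapositive. Assume $\CL_{\pi_V\wh{\otimes}\pi_W}=0$ and let $\lambda\in\Hom_{G_{n^+}(F)}(\Pi,\mathbbm{1})$. Because matrix coefficients of $\Pi$ are integrable over $G_{n^+}(F)$ (this is what Lemma \ref{lem: estimate of matrix coefficient} buys), for $f$ such a coefficient one gets
$\lambda(\Pi(f)v)=\bigl(\int_{G_{n^+}(F)}f\bigr)\,\lambda(v)=\CL(\xi,\xi^*)\,\lambda(v)=0$.
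The heart of the proof — and the place where the Plancherel formula genuinely enters — is then a completeness statement: the vectors $\Pi(f)v$, with $f$ ranging over (conjugates of) matrix coefficients of $\Pi$, generate a subspace on which $\lambda$ cannot vanish identically unless $\lambda=0$; equivalently, $\lambda$ is continuous for a topology in which such vectors are dense, which rests on the temperedness of $\Pi$ and on the estimates you quote. This density/continuity step is exactly what your sketch compresses into the phrase ``isolating the spectral projector,'' and it is the part that carries the whole weight of the theorem. Until it is supplied, the proposal has a gap at its central point.
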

\begin{proof}
The proof follows from the arguments in \cite[\S 3]{X3}  verbatim based on the estimate in Lemma \ref{lem: estimate of matrix coefficient}.  
\end{proof}

\begin{proof}[Proof for (2) of Proposition \ref{pro: integral}]
 We choose $\udl{s}\in (i\BR)^t$, $\udl{s}'\in (i\BR)^{t'}$ such that $s_i+s_j\neq 0$ for all $1\leq i\leq j\leq t$ and $s_i'+s_j'\neq 0$ for all $1\leq i\leq j \leq t'$, then $\pi_{\udl{s}},\sigma_{\udl{s'}}$ are irreducible and have the same $\chi$-parameter as $\pi_0,\sigma_0$, respectively. From the local Gan-Gross-Prasad conjecture for tempered parameters,
$m(\pi_0,\sigma_0)\neq 0$ implies 
$m(\pi_{\udl{s}},\sigma_{\udl{s'}})\neq 0$. From Theorem \ref{thm: tempered intertwining nonzero}, we have
$\CL_{\pi_{\underline{s}},\sigma_{\underline{s}'}}\neq 0$, so there exist $v\otimes v' \otimes w\in \pi_K\otimes \sigma_{K'}\otimes \omega_{n^+,\psi_F}$ and 
$ v^*\otimes v^{\prime, *} \otimes w^* \in \pi_K^\vee\otimes \sigma_{K'}^\vee \otimes \omega_{n^+,\psi_F}^\vee$ such that
\[
\RI_{v,v^*,v'\otimes w,v^{\prime,*}\otimes w^*}(\udl{s},\udl{s}')\neq 0.
\]
\end{proof}

\subsubsection{Casselman's canonical pairing}

One key ingredient of M\oe glin and Waldspurger's proof is Casselman's canonical pair for irreducible admissible representations. Since we are treating the Fourier-Jacobi models, we need the corresponding results for Heisenberg-oscillator representations.

Let $\wt{P}=\wt{M}\ltimes N$ be the parabolic subgroup of $\wt{G}_{n^+}$ stabilizing a totally isotropic space flag 
\[
X_{m_1}\subset \cdots \subset X_{m_t},
\]
and we denote by $\wt{P}^-=\wt{M}\ltimes N^-$ the opposite parabolic subgroup of $\wt{P}$, which stabilizes a totally isotropic flag
\[
Y_{m_1}\subset \cdots \subset Y_{m_t}.
\]

For $\pi_{n^+}\in \Pi_F(\wt{G}_{n^+})$, we denote by $\Jac_{\wt{P}}(\pi_{n^+})$ the Jacquet module of $\pi_{n^+}$ with respect to $\wt{P}$ and by $p_{N}$ the projection from $\pi_{n^+}$ to $\Jac_{\wt{P}}(\pi_{n^+})$.

\begin{lem}\label{lem: Casselman's}
With the notations above, the following hold. 
\begin{enumerate}
    \item 
The coinvariant 
\[
(\omega_{n^+,\psi_F})_{{N}\ltimes X_{m_t}}=V_{\omega_{n^+,\psi_F}}/\Span\{\omega_{n^+,\psi_F}(g)v-v\mid g\in N(F)\ltimes X_{m_t}(F),v\in V_{\omega_{n^+,\psi_F}}\}
\]  
is equal to $\omega_{n^+-m_t,\psi_F}$ is  $\wt{M}(F)$-representation such that  $\GL_{m_{i+1}-m_i}(E)$ acts trivially on it and $G_{n^+-m_t}(F)$ acts as Heisenberg-oscillator representation.
\item There is a nondegenerate $\wt{M}(F)$-equivariant pairing
\[
(\omega_{n^+,\psi_F})_{{N}\ltimes X_{m_t}}\times (\omega_{n^+,\psi_F^{-1}})_{N^-\ltimes Y_{m_t}}\to \BC.
\]
\end{enumerate}
\end{lem}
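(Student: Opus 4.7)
The plan is to exploit the mixed model of the Heisenberg--oscillator representation. Choose a polarization of the underlying $\epsilon$-Hermitian space $V$ compatible with the flag $X_{m_1}\subset\cdots\subset X_{m_t}$ and its dual flag $Y_{m_1}\subset\cdots\subset Y_{m_t}$, writing
\[
V=X_{m_t}\oplus V_0\oplus Y_{m_t},
\]
where $V_0$ is the non-degenerate $\epsilon$-Hermitian complement of dimension $n^+-m_t$. In this realization the Weil representation has the mixed-model description
\[
\omega_{n^+,\psi_F}\cong \CS(X_{m_t})\,\wh\otimes\,\omega_{V_0,\psi_F},
\]
where (a) elements of $X_{m_t}\subset \CH_{n^+}$ act on the first factor by translation; (b) the $\GL_{m_{i+1}-m_i}(E)$-factors of the Levi $\wt{M}$ act only on $\CS(X_{m_t})$, through their natural linear action on $X_{m_t}$ and a character arising from the metaplectic cocycle; and (c) $G_{n^+-m_t}(F)$ acts on the second tensor factor through its own Heisenberg--oscillator representation $\omega_{V_0,\psi_F}=\omega_{n^+-m_t,\psi_F}$.

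For part (1), I would first take coinvariants along $X_{m_t}$ (the Heisenberg factor). The translation action of $X_{m_t}$ on $\CS(X_{m_t})$ has one-dimensional coinvariants, so
\[
(\omega_{n^+,\psi_F})_{X_{m_t}}\cong \omega_{V_0,\psi_F}=\omega_{n^+-m_t,\psi_F}.
\]
By construction the $\GL$-factors of $\wt{M}$ act trivially on this quotient (their action was confined to the first tensor factor) and $G_{n^+-m_t}(F)$ retains its Weil-representation action. It then remains to check that $N$ acts trivially on the quotient: $N$ preserves the flag, acts trivially on each associated graded piece, and hence induces the trivial action on $X_{m_t}^\perp/X_{m_t}\cong V_0$; consequently the remaining $N$-action on $\CS(X_{m_t})\,\wh\otimes\,\omega_{V_0,\psi_F}$ lands entirely in the translation action of $X_{m_t}$ on the first factor, and therefore descends to the trivial action on the $X_{m_t}$-coinvariants, so no further reduction occurs.

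For part (2), I would apply the same analysis to $\omega_{n^+,\psi_F^{-1}}$ with the opposite polarization $V=Y_{m_t}\oplus V_0\oplus X_{m_t}$ to obtain
\[
(\omega_{n^+,\psi_F^{-1}})_{N^-\ltimes Y_{m_t}}\cong \omega_{V_0,\psi_F^{-1}}=\omega_{n^+-m_t,\psi_F^{-1}}
\]
as $\wt{M}(F)$-representations, again with $\GL$-factors acting trivially. Since $\omega_{n^+-m_t,\psi_F}$ and $\omega_{n^+-m_t,\psi_F^{-1}}$ are contragredient via the standard $G_{n^+-m_t}(F)$-invariant nondegenerate pairing, and since the $\GL$-factors act trivially on both sides by part~(1) and its symmetric counterpart, this pairing is automatically $\wt{M}(F)$-equivariant. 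One may equivalently view this as the Jacobi-group analogue of Casselman's canonical pairing between Jacquet modules with respect to opposite parabolics.

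The main obstacle is the bookkeeping in the metaplectic case: one must choose compatible splittings of the metaplectic cover over the Levi and the unipotent radical and verify that the Weil--Rao $2$-cocycles cancel so that the claimed actions descend cleanly to the coinvariants. A secondary subtlety is verifying that in the mixed model the $\GL$-factors act only on the first tensor factor, which requires tracking the intertwining between the Schr\"odinger and mixed models; the required formulas are standard in the theta correspondence literature (cf.\ the appendix of \cite{GI16}).
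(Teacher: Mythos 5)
Your argument is correct and follows essentially the same route as the paper's proof: both reduce to the mixed model $\CS(X_{m_t})\,\wh\otimes\,\omega_{n^+-m_t,\psi_F}$ to compute the $X_{m_t}$-coinvariants (the paper simply cites \cite[Lemma 3.55]{GKT} for this step), observe that $N(F)$ acts trivially on the quotient, and deduce part (2) from the standard nondegenerate $\wt{G}_{n^+-m_t}(F)$-equivariant pairing $\omega_{n^+-m_t,\psi_F}\times\omega_{n^+-m_t,\psi_F^{-1}}\to\BC$. Your extra remarks on the $N$-action on $X_{m_t}^\perp/X_{m_t}$ and on why triviality of the $\GL$-factors gives full $\wt{M}(F)$-equivariance of the pairing only flesh out details the paper leaves implicit.
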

\begin{proof}
From \cite[Lemma 3.55]{GKT}, we have
\[
(\omega_{n^+,\psi_F})_{X_{m_t}}=\omega_{n^+-m_t,\psi_F}.
\]
With the mixed model of the Weil representation $\omega_{n^+,\psi_F}$ that is realized on $\mathcal{S}(X_{m_t}) \otimes \omega_{n^+-m_t,\psi_F}$, it is clear that ${N}(F)$ acts trivially on $\omega_{n^+-m_t,\psi_F}$. This proves Part (1).

Part (2) follows from the nondegenerate $\wt{G}_{n^+-m_t}(F)$-equivariant bilinear pairing
\[
\omega_{n^+-m_t,\psi_F}\times \omega_{n^+-m_t,\psi_F^{-1}}\to \BC.
\]
\end{proof}

Now, we are ready to do the explicit computation by using the parameterization
\begin{equation}\label{equ: Cartan}
A^+_{G_{n^+}}=\set{{\varpi}^{\udl{n}}=\diag(\varpi^{n_1}_E,\cdots,\varpi_E^{n_r}) | \udl{n}\in {\CN}^+},\text{ where } {\CN}^+=\set{(n_1,\cdots,n_r) | n_1\geq\cdots\geq n_{r}\geq 0 }.
\end{equation}

 For fixed $v,v^*,v',w,v^{\prime,*},w^*$, the smoothness of the representations implies the existence of an open compact subgroup $K_0$ of $K_{\wt{G}_{n^+}}$ such that 
\[
\pi_{K}(K_0)v=v,\quad \sigma_{K}(K_0)v'=v',\quad \omega_{n^+,\psi_F}(K_0,0)w=w,
\]
\[
\pi_K^*(K_0)v^*=v^*,\quad \sigma_{K}(K_0)v^{\prime,*}=v^{\prime,*},\quad \omega_{n^+,\psi_F^{-1}}(K_0,0)w^*=w^*.
\]
From the Cartan decomposition,
\[
\wt{G}_{n^+}(F)=\bigcup_{\udl{n}\in \CN^+}K_{\wt{G}_{n^+}}\varpi^{\udl{n}} K_{\wt{G}_{n^+}}.
\]
Then we express the integral as the infinite sum
\begin{equation}\label{equ: infinite sum}\RI_{v,v^*,v'\otimes w,v^{\prime,*}\otimes w^*}(\udl{s},\udl{s}')=\frac{1}{[K_{\wt{G}_n}:K_0]^2}\sum_{\udl{n}\in \CN^+}\sum_{x,x^*\in K_{\wt{G}_n}/K_0
}
F_{\udl{n},x,x'}(\udl{s},\udl{s'}),
\end{equation}
where
\[
\begin{aligned}
F_{\udl{n},x,x'}(\udl{s},\udl{s'})
&= \mathrm{meas}(K_{\wt{G}_{n^+}}{\varpi}^{\udl{n}}K_{\wt{G}_{n^+}})\langle\pi_{\udl{s}}(\wt{{\varpi}^{\udl{n}}})\pi_K(x)v, \pi_K^*(x^*)v^*\rangle \langle\sigma_{\udl{s'}}(\wt{{\varpi}^{\udl{n}}})\sigma_K(x)v',\sigma_K^*(x^*)v^{\prime,*}\rangle\\
&\qquad\cdot\langle \omega_{n^+,\psi_F}(\wt{{\varpi}^{\udl{n}}},0)\omega_{n^+,\psi_F}(x)w, \omega_{n^+,\psi_F^{-1}}(x^*,0)w^*\rangle.
\end{aligned}
\]
From Casselman's canonical pairing for $\pi_{\udl{s}}$, $\sigma_{\udl{s}'}$ and Lemma \ref{lem: Casselman's}, we have that 
\[
\begin{aligned}
F_{\udl{n},x,x'}(\udl{s},\udl{s'})
&=\mathrm{meas}(K_{\wt{G}_{n^+}}{\varpi}^{\udl{n}}K_{\wt{G}_{n^+}})
\langle\pi_{\udl{s}}(\wt{{\varpi}^{\udl{n}}})p_{\wt{N}}(\pi_K(x)v),p^*_{\wt{N}^-}( \pi_K^*(x^*)v^*)\rangle \\
&\qquad\cdot\langle\sigma_{\udl{s'}}(\wt{{\varpi}^{\udl{n}}})p_{\wt{N}}(\sigma_K(x)v'),p^*_{\wt{N}^-}(\sigma_K^*(x^*)v^{\prime,*})\rangle\\
&\qquad\qquad \cdot\langle \omega_{n^+,\psi_F}(\wt{{\varpi}^{\udl{n}}},0)p_{X_{m_t}}(\omega_{n^+,\psi_F}(x,0)w), \omega_{n^+,\psi_F^{-1}}(x^*,0)p^*_{Y_{m_t}}(w^*)\rangle.
\end{aligned}
\]
For fixed $x,x'\in K_{\wt{G}_{n^+}}/K_0$, we have 
\[
F_{\udl{n},x,x'}(\udl{s},\udl{s'})=C_{x,x'}\mathrm{meas}(K_{\wt{G}_{n^+}}{\varpi}^{\udl{n}}K_{\wt{G}_{n^+}})\chi_{\udl{s}}(\varpi^{\udl{n}})\chi_{\udl{s'}}(\varpi^{\udl{n}}),
\]
where $\chi_{\udl{s}}$ and $\chi_{\udl{s}'}$ are the central characters of the representations 
\[
\delta_{P}^{1/2}|\det|^{s_1}\rho_1 \otimes \cdots \otimes |\det|^{s_t}\rho_t \otimes \pi_0
\quad\text{and}\quad
\delta_{P'}^{1/2}|\det|^{s_1'}\tau_1 \otimes \cdots \otimes |\det|^{s_{t'}'}\tau_t \otimes \sigma_0
\]respectively.
From \cite[Proposition (3.2.15)]{M71}, we have
\[
\mathrm{meas}(K_{\wt{G}_{n^+}}\varpi^{\udl{n}}K_{\wt{G}_{n^+}})=\delta_{P_0}(\varpi^{\udl{n}})Q^{\udl{n}}(q^{-1}),
\]
where $Q^{\udl{n}}$ only depends on the set $S_{\udl{n}}=\set{1\leq i\leq r | n_i=n_{i+1}}$, here we take $n_{r+1}=0$.
When 
\[
|\Re(s_i)|,\ |\Re(s_i')|<\frac{1}{4r}
\]
for every $1\leq i\leq r$, the infinite sum
\[
\sum_{\substack{\udl{n}\in\CN^+\\S_{\udl{n}}=S}}
F_{\udl{n},x,x'}(\udl{s},\udl{s'})=\sum_{\substack{\udl{n}\in\CN^+\\S_{\udl{n}}=S}}C_{x,x'}
\delta_{P_0}({\varpi}^{\udl{n}})\chi_{\udl{s}}(\varpi^{\udl{n}})\chi_{\udl{s'}}(\varpi^{\udl{n}})
\]
converges to a rational function.   From (\ref{equ: infinite sum}), we conclude Part (3) of Proposition \ref{pro: integral}.

\end{document}